\newtheorem{theorem}{Theorem}[section]
\newtheorem{lemma}[theorem]{Lemma}
\newtheorem{corollary}[theorem]{Corollary}
\theoremstyle{definition}
\newtheorem{definition}[theorem]{Definition}
\newtheorem{example}[theorem]{Example}
\theoremstyle{remark}
\numberwithin{equation}{section}
\DeclareMathOperator{\cha}{char} \DeclareMathOperator{\End}{End}
\DeclareMathOperator{\Hom}{Hom} \DeclareMathOperator{\id}{Id}
\DeclareMathOperator{\chf}{ch} \DeclareMathOperator{\Tr}{Tr}
\DeclareMathOperator{\Mod}{Mod}
\DeclareMathOperator{\tr}{tr}
\DeclareMathOperator{\Res}{Res}
\newcommand{\HH}{\mathcal H}
\newcommand{\Z}{\mathbb Z}
\newcommand{\BS}{\mathfrak S}
\newcommand{\DD}{\mathfrak D}
\newcommand{\ft}{\mathfrak t}
\newcommand{\lam}{\lambda}
\newcommand{\PP}{\mathcal{P}}
\newcommand{\ulam}{\underline{\lambda}}
\newcommand{\umu}{\underline{\mu}}
\begin{document}

\title[\null]{On the decomposition numbers of the Hecke algebra of type $D_n$ when $n$ is even}

\author{Jun Hu}

\address{Department of Applied Mathematics\\
School of Science\\
Beijing Institute of Technology\\
Beijing 100081 P.R. China}
\email{junhu303@yahoo.com.cn}

\subjclass[2000]{Primary 17B37, 20G42; Secondary 20G15}
\date{}
\keywords{Iwahori--Hecke algebra; $q$-Schur algebra; dual Specht modules}

\begin{abstract}
Let $n\geq 4$ be an even integer. Let $K$ be a field with $\cha
K\neq 2$ and $q$ an invertible element in $K$ such that
$\prod_{i=1}^{n-1}(1+q^i)\neq 0$. In this paper, we study the
decomposition numbers over $K$ of the Iwahori--Hecke algebra
$\HH_q(D_n)$ of type $D_n$. We obtain some equalities which relate
its decomposition numbers with certain Schur elements and the decomposition numbers of various
Iwahori--Hecke algebras of type $A$ with the same parameter $q$. When $\cha K=0$, this completely determine
all of its decomposition numbers. The main tools we used are the
Morita equivalence theorem established in \cite{Hu1} and certain
twining character formulae of Weyl modules over a tensor product of
two $q$-Schur algebras.
\end{abstract}
\maketitle

\section{Introduction}\smallskip

Let $n$ be a natural number. Let $K$ be a field and $q, Q$ two
invertible elements in $K$. Let $W_n$ be the Weyl group of type
$A_{n-1}$ or of type $B_n$. Let $\HH(W_n)$ be the Iwahori--Hecke
algebra of $W_n$ with parameter $q$ if $W_n=W(A_{n-1})$; or with
parameters $q, Q$ if $W_n=W(B_n)$. The modular representation theory
of $\HH(W_n)$ has been well studied in the papers \cite{AJ},
\cite{DJ1}, \cite{DJ2}, \cite{DJ3}, \cite{DJMu} and \cite{Mu}. In
fact, most of the results of the modular representation theory of these
algebras have been generalized to a more general class of
algebras---the cyclotomic Hecke algebras of type $G(r,1,n)$, where
$r\in\mathbb{N}$. The latter was now fairly well understood by the
work of \cite{A1}, \cite{A2}, \cite{AM}, \cite{DJM} and \cite{DM}.\smallskip

This paper is concerned with the Iwahori--Hecke algebra $\HH_q(D_n)$
of type $D_n$. By definition, $\HH_{q}(D_n)$ is the associative
unital $K$-algebra with generators $T_u, T_1,\cdots,$ $T_{n-1}$
subject to the following relations
$$\begin{aligned}
&(T_u+1)(T_u-q)=0,\\
&(T_i+1)(T_i-q)=0,\quad\text{for $1\leq i\leq n-1$,}\\
&T_uT_{2}T_u=T_{2}T_{u}T_{2},\quad T_uT_1=T_1T_u,\\
&T_iT_{i+1}T_i=T_{i+1}T_{i}T_{i+1},\quad\text{for $1\leq i\leq n-2$,}\\
&T_iT_j=T_jT_i,\quad\text{for $1\leq i<j-1\leq n-2$,}\\
&T_uT_i=T_iT_u,\quad\text{for $2<i<n$.}\end{aligned}
$$
The algebra $\HH_{q}(D_n)$ can be embedded into a Hecke algebra
$\HH_q(B_n)$ of type $B_n$ with parameters $\{q,1\}$ as a ``normal" subalgebra. Namely, let
$\HH_{q}(B_n)$ be the associative unital $K$-algebra with generators
$T_0, T_1,\cdots, T_{n-1}$ subject to the following relations
$$\begin{aligned}
&(T_0+1)(T_0-1)=0,\\
&T_0T_1T_0T_1=T_1T_0T_1T_0,\\
&(T_i+1)(T_i-q)=0,\quad\text{for $1\leq i\leq n-1$,}\\
&T_iT_{i+1}T_i=T_{i+1}T_{i}T_{i+1},\quad\text{for $1\leq i\leq n-2$,}\\
&T_iT_j=T_jT_i,\quad\text{for $0\leq i<j-1\leq n-2$.}\end{aligned}
$$
Then the map $\iota$ which sends $T_u$ to $T_0T_1T_0$, and $T_i$ to
$T_i$ (for each integer $i$ with $1\leq i\leq n-1$) can be uniquely
extended to an injection of $K$-algebras. Throughout this paper, we shall
always identify $\HH_{q}(D_n)$ with the subalgebra
$\iota(\HH_{q}(D_n))$ using this embedding $\iota$.\smallskip

{\it Henceforth, we shall assume that the characteristic of the field $K$ (denoted by $\cha K$) is not equal $2$}. In
this case,
if $q$ is not a root of unity, then $\HH_q(D_n)$ is semisimple. Since we are only interested in the modular (i.e.,
non-semisimple) case,
we shall also assume that $q$ is a root of unity in $K$.
Let $e$ be the smallest positive integer such that
$1+q+q^2+\cdots+q^{e-1}=0$. If $q=1$, then $e=\cha K$. The modular representation
theory of $\HH_{q}(D_n)$ over $K$ was studied in
a number of papers \cite{G}, \cite{Hu1}, \cite{Hu2}, \cite{Hu4},
\cite{Ja1}, \cite{P1}. The algebra $\HH_{q}(D_n)$ is a special case of a more general class of algebra---the
cyclotomic Hecke
algebras of type $G(r,p,n)$. The latter was studied in \cite{Hu3}, \cite{Hu5}, \cite{Hu6}, \cite{Hu7}, \cite{HMa} and
\cite{HSh}.
The papers \cite{Hu1}, \cite{Hu2}
and \cite{P1} studied the restriction to $\HH_{q}(D_n)$ of simple
$\HH_q(B_n)$-modules using the combinatorics of Kleshchev
bipartitions; while the papers \cite{G} and \cite{Ja1} studied the
simple $\HH_{q}(D_n)$-modules with the aim of constructing the
so-called ``canonical basic set". In both approaches, simple $\HH_{q}(D_n)$-modules have been classified but using different parameterizations.
\smallskip

One of the major open problems in the modular representation theory of Hecke algebras is the determination of their
decomposition numbers.
In the case of type $A$ and type $B$ (or more generally, of type $G(r,1,n)$), thanks to the work of \cite{A1} and
\cite{LLT}, the decomposition numbers can be computed by the evaluation at $1$ of some alternating sum of certain
parabolic affine Kazhdan--Lusztig polynomials when $\cha K=0$ and $q\neq 1$. It is natural to ask what will happen
in the type $D$ case. In \cite{P1}, it was proved that if $n$ is odd and $f_{n}(q):=\prod_{i=1}^{n-1}(1+q^i)\neq 0$
in $K$, then
$$\HH_{q}(D_n)\overset{Morita}{\sim}\bigoplus_{a=(n+1)/2}^{n}\HH_q(\BS_{(a,n-a)}), $$
where $\BS_{(a,n-a)}:=\BS_{\{1,\cdots,a\}}\times\BS_{\{a+1,\cdots,n\}}$ is the parabolic subgroup of the symmetric group
$\BS_n$ on $\{1,2,\cdots,n\}$, $\HH_q(\BS_{(a,n-a)})$ is the parabolic subalgebra of $\HH_q(\BS_n)$ corresponding to
$\BS_{(a,n-a)}$. In this case, if $S_{\ulam}$ and $D_{\umu}$ denote the dual Specht module
and simple module of $\HH_q(B_n)$ labelled by the bipartition
$\ulam=(\lam^{(1)},\lam^{(2)})$ and the $e$-restricted bipartition
$\umu=(\mu^{(1)},\mu^{(2)})$ respectively, then we have the
following equality of decomposition numbers:
$$
\bigl[S_{\ulam}\downarrow_{\HH_q(D_n)}:D_{\umu}\downarrow_{\HH_q(D_n)}\bigr]=
\bigl[S_{\lam^{(1)}}:D_{\mu^{(1)}}\bigr]\bigl[\widehat{S}_{\lam^{(2)}}:\widehat{D}_{\mu^{(2)}}\bigr],
$$
where $S_{\lam^{(1)}}, D_{\mu^{(1)}}$ (resp.,
$\widehat{S}_{\lam^{(2)}}, \widehat{D}_{\mu^{(2)}}$) denote the dual
Specht module and simple module of $\HH_q(\BS_a)$ (resp., of
$\HH_q(\BS_{\{a+1,\cdots,n\}})$). Hence computing the decomposition
numbers of $\HH_q(D_n)$ can be reduced to computing the decomposition
numbers of $\HH_q(\BS_{(a,n-a)})$ where $(n+1)/2\leq a\leq n$.
\smallskip

In \cite{Hu1}, it was proved that if $n$ is even and $f_{n}(q)\neq 0$ in $K$, then \begin{equation}
\label{MoritaEquivalence}
\HH_{q}(D_n)\overset{Morita}{\sim}A(n/2)\oplus\bigoplus_{a=n/2+1}^{n}\HH_q(\BS_{(a,n-a)}), \end{equation}
where $A(n/2)$ is the $K$-subalgebra of $\HH_q(\BS_n)$ generated by $\HH_q(\BS_{(n/2,n/2)})$ and
an invertible element $h(n/2)\in\HH_q(\BS_n)$ (see \cite[Definition 1.5]{Hu1} for definition of $h(n/2)$).
Therefore, in this case, computing the decomposition numbers of $\HH_q(D_n)$ can be reduced to computing the
decomposition numbers of $\HH_q(\BS_{(a,n-a)})$ where $n/2+1\leq a\leq n$ and the decomposition numbers
of the algebra $A(n/2)$. \smallskip

The purpose of this article is to determine the decomposition
numbers for the algebra $A(n/2)$. The main results of this paper
provide some explicit formulae which determine these decomposition
numbers of $\HH_q(D_n)$ in terms of the decomposition numbers of $\HH_q(\BS_{n/2})$
and certain Schur elements, see Theorem \ref{decnumodd2}, Lemma \ref{decnumeven1} (where it is only assumed $\cha
K\neq 2$), Theorem \ref{decnum0} (in the case where $\cha K=0$) and Theorem \ref{mainthm}. Note that these results are
also valid in the case where $q=1$.
\smallskip

The paper is organized as follows. In Section 2 we shall first
recall some known results about the modular
representation theory of the Hecke algebra of type $D_n$ when $n$ is
even in the separated case. Then we shall state our main theorems. In Section 3 we lift
the construction of the algebra $A(n/2)$ to the level of $q$-Schur
algebras and introduce a certain covering $\widetilde{A}(n/2)$ of
$A(n/2)$. Using Schur functor, we transfer the original problem of computing the
decomposition numbers of $A(n/2)$ to the corresponding problem for
$\widetilde{A}(n/2)$. In Section 4, we explicitly compute the
Laurent polynomial $f_{\ulam}(v)$ introduced in \cite[Lemma
3.2]{Hu1} in terms of the Schur elements of the Hecke algebra
$\HH_q(B_n)$ and $\HH_q(\BS_{n/2})$. In Section 5, by computing the
twining character formula of some Weyl modules of a tensor product
of two $q$-Schur algebras, we determine our
desired decomposition numbers when $K$ is of characteristic $0$.
When $K$ is of odd characteristic, our results only give some
equalities about these decomposition numbers modulo $\cha K$.
\bigskip

\section{Preliminaries}\smallskip

{\it From now on until the end of this paper, we assume that
$n=2m\geq 4$ is an even integer, and} \begin{equation}\label{separa}
2f_n(q):=2\prod_{i=1}^{n-1}(1+q^i)\neq 0.
\end{equation}
{\it We shall refer (\ref{separa}) as separation condition and say that we are in the separated case.}
\smallskip

Let $k$ be a positive integer. A sequence of nonnegative integers
$\lam=(\lam_1,\lam_2,\cdots)$ is said to be a composition of $k$
if $\sum_{i\geq 1}\lam_i=k$. A
composition $\lam=(\lam_1,\lam_2,\cdots)$ of $k$ is said to be a
partition of $k$ (denoted by $\lam\vdash k$) if
$\lam_1\geq\lam_2\geq\cdots$. If $\lam$ is a composition of $k$, then
we write $|\lam|=k$. A bipartition of $n$ is an ordered pair
$\ulam=(\lam^{(1)}, \lam^{(2)})$ of partitions such that
$\lam^{(1)}$ is a partition of $a$ and $\lam^{(2)}$ is a partition
of $n-a$ for some integer $a$ with $0\leq a\leq n$. In this case, we
say that $\ulam$ is an $a$-bipartition of $n$ and we also
write $\ulam\vdash n$.\smallskip

A partition $\lam$ is said to be $e$-restricted if
$0\leq\lam_i-\lam_{i+1}<e$ for all $i$. We say that a bipartition
$\ulam=(\lam^{(1)},\lam^{(2)})$ is $e$-restricted if both
$\lam^{(1)}$ and $\lam^{(2)}$ are $e$-restricted. Recall that for
each bipartition $\ulam=(\lam^{(1)},\lam^{(2)})$ of $n$, there is a
dual Specht module\footnote{Our $S_{\ulam}$ in this paper was
denoted by $\widetilde{S}^{\lam}$ in \cite{Hu1}.} $S_{\ulam}$ of
$\HH_q(B_n)$. If $\HH_q(B_n)$ is semisimple, then the set $
\bigl\{S_{\ulam}\bigm|\ulam=(\lam^{(1)},\lam^{(2)})\vdash n\bigr\} $
forms a complete set of pairwise non-isomorphic simple
$\HH_q(B_n)$-modules. In general, if $\ulam$ is $e$-restricted, then
$S_{\ulam}$ has a unique simple $\HH_q(B_n)$-head $D_{\ulam}$, and
the set
$$\Bigl\{D_{\ulam}\Bigm|\text{$\ulam=(\lam^{(1)},\lam^{(2)})\vdash
n$ is $e$-restricted}\Bigr\} $$ forms a complete set of pairwise
non-isomorphic simple $\HH_q(B_n)$-modules.\smallskip

Let $v$ be an indeterminate over $\Z$. Let $\mathcal{A}:=\Z[v,v^{-1}]$. For each bipartition
$\ulam=(\lam^{(1)},\lam^{(2)})$ of $n$, a Laurent polynomial
$f_{\ulam}(v)\in\mathcal{A}$ was introduced in \cite[Lemma
3.2]{Hu1}. If $\lam^{(1)}=\lam^{(2)}$, then it was proved in
\cite[Theorem 3.5]{Hu1} that there exist some Laurent polynomials
$g_{\ulam}(v)\in\mathcal{A}$, such that
$f_{\ulam}(v)=\bigl(g_{\ulam}(v)\bigr)^2$. Since $\cha K\neq 2$, we
actually have two choices for such $g_{\ulam}(v)$. {\it Henceforth,
for each bipartition $\ulam=(\lam^{(1)},\lam^{(2)})$ of $n$
satisfying $\lam^{(1)}=\lam^{(2)}$, we fix one choice of such
$g_{\ulam}(v)$, and we denote it by $\sqrt{f_{\ulam}(v)}$}. Then
another choice would be $-\sqrt{f_{\ulam}(v)}$. Once this is done, we can
canonically define two $\HH_q(D_n)$-submodules $S^{+}_{\ulam}$ and
$S^{-}_{\ulam}$ of $S_{\ulam}\!\downarrow_{\HH_q(D_n)}$ satisfying
$S_{\ulam}\!\downarrow_{\HH_q(D_n)}=S^{+}_{\ulam}\oplus
S^{-}_{\ulam}$ (see \cite[Theorem 4.6]{Hu3}). \smallskip

Let $\PP_n$ denote the set of bipartitions of $n$. For any
$\ulam, \umu\in\PP_n$, we define $\ulam\sim\umu$ if
$\lam^{(1)}=\mu^{(2)}$ and $\lam^{(2)}=\mu^{(1)}$. Let $\tau$ be the
$K$-algebra automorphism of $\HH_q(B_n)$ which is defined on
generators by $\tau(T_1)=T_0T_1T_0, \tau(T_i)=T_i$, for any $i\neq
1$. Let $\sigma$ be the $K$-algebra automorphism of $\HH_q(B_n)$
which is defined on generators by $\sigma(T_0)=-T_0,
\sigma(T_i)=T_i$, for any $i\neq 0$. Clearly
$\tau(\HH_q(D_n))=\HH_q(D_n)$ and
$\sigma\!\downarrow_{\HH_q(D_n)}=\id$. With our assumption
(\ref{separa}) in mind, by \cite[Corollary 3.7]{Hu3}, for each
$\ulam=(\lam^{(1)},\lam^{(2)})\in\PP_n$, $$
S_{\ulam}\!\downarrow_{\HH_q(D_n)}\cong
S_{\widehat{\ulam}}\!\downarrow_{\HH_q(D_n)},\,\,\text{where
$\widehat{\ulam}:=(\lam^{(2)},\lam^{(1)})$}.
$$

Note that the inequality (\ref{separa}) is only a part of the conditions for the semisimplicity of $\HH_q(D_n)$.
The following result depends heavily
on our assumption (\ref{separa}).

\addtocounter{theorem}{1}
\begin{lemma} {\rm (\cite{Hu3})} If $\HH_q(D_n)$ is semisimple, then the set $$
\Bigl\{S_{\ulam}\!\downarrow_{\HH_q(D_n)}, S^{+}_{(\beta,\beta)},
S^{-}_{(\beta,\beta)}\Bigm|\text{$\ulam=(\lam^{(1)},\lam^{(2)})\in\PP_n/\!{\sim}$,
$\lam^{(1)}\neq \lam^{(2)}$, $\beta\vdash m$}\Bigr\} $$ forms a
complete set of pairwise non-isomorphic simple $\HH_q(D_n)$-modules.
In general, if $\umu\vdash n$ is $e$-restricted and
$\mu^{(1)}\neq\mu^{(2)}$, then $D_{\umu}\!\downarrow_{\HH_q(D_n)}$
remains irreducible and it is the unique simple $\HH_q(D_n)$-head of
$S_{\umu}\!\downarrow_{\HH_q(D_n)}$; if $\alpha\vdash m$ is
$e$-restricted, then $S^{+}_{(\alpha,\alpha)}$ (resp.,
$S^{-}_{(\alpha,\alpha)}$) has a unique simple $\HH_q(D_n)$-head
$D^{+}_{(\alpha,\alpha)}$ (resp., $D^{-}_{(\alpha,\alpha)}$). The algebra
$\HH_q(D_n)$ is split over $K$ and the set
$$\biggl\{D_{\umu}\!\downarrow_{\HH_q(D_n)}, D^{+}_{(\alpha,\alpha)}, D^{-}_{(\alpha,\alpha)}\biggm|\begin{matrix}
\text{$\umu=(\mu^{(1)},\mu^{(2)})\in\PP_n/\!{\sim}$ is $e$-restricted}\\
\text{$\mu^{(1)}\neq\mu^{(2)}$, $\alpha\vdash m$ is
$e$-restricted}\end{matrix}\biggr\} $$ forms a complete set of
pairwise non-isomorphic simple $\HH_q(D_n)$-modules.
\end{lemma}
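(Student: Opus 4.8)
The plan is to deduce the lemma from Clifford theory for the inclusion $\HH_q(D_n)\subseteq\HH_q(B_n)$. Under the embedding $\iota$ one has $\HH_q(B_n)=\HH_q(D_n)\oplus\HH_q(D_n)T_0$, so $\HH_q(B_n)$ is a free $\HH_q(D_n)$-module of rank $2$ and, since $\cha K\neq2$, a separable Frobenius extension; consequently induction $M\mapsto M\!\uparrow:=\HH_q(B_n)\otimes_{\HH_q(D_n)}M$ is exact and two-sided adjoint to restriction, and $\HH_q(B_n)$ is semisimple precisely when $\HH_q(D_n)$ is. Conjugation by $T_0$ restricts on $\HH_q(D_n)$ to $\tau$ (it interchanges $T_u=T_0T_1T_0$ with $T_1$ and fixes the other $T_i$), so Clifford theory governs restrictions in terms of $\tau$-orbits, and a Mackey-type formula describes $(N\!\downarrow)\!\uparrow$ for an $\HH_q(B_n)$-module $N$. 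I would take from \cite{Hu3} the twist formula $S_{\ulam}^{\tau}\cong S_{\widehat\ulam}$ (hence $D_{\umu}^{\tau}\cong D_{\widehat\umu}$ for $e$-restricted $\umu$), the isomorphism $S_{\ulam}\!\downarrow\cong S_{\widehat\ulam}\!\downarrow$ of \cite[Corollary~3.7]{Hu3} (where (\ref{separa}) is used), and the splitting $S_{(\beta,\beta)}\!\downarrow=S^{+}_{(\beta,\beta)}\oplus S^{-}_{(\beta,\beta)}$ of \cite[Theorem~4.6]{Hu3}; note $\widehat\ulam=\ulam$ iff $\lam^{(1)}=\lam^{(2)}$, i.e. $\ulam=(\beta,\beta)$ with $\beta\vdash m$.

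Suppose first that $\HH_q(D_n)$, hence $\HH_q(B_n)$, is semisimple, so $\{S_{\ulam}\mid\ulam\vdash n\}$ is a full set of simple $\HH_q(B_n)$-modules. If $\lam^{(1)}\neq\lam^{(2)}$, then $\{\ulam,\widehat\ulam\}$ is a $\tau$-orbit of size $2$, so $S_{\ulam}\!\downarrow\cong S_{\widehat\ulam}\!\downarrow$ is simple; Frobenius reciprocity together with Clifford theory gives $\Hom_{\HH_q(D_n)}(S_{\ulam}\!\downarrow,S_{\umu}\!\downarrow)\neq0$ iff $\umu\in\{\ulam,\widehat\ulam\}$, so distinct $\sim$-classes yield non-isomorphic modules. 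If $\ulam=(\beta,\beta)$, then $S_{(\beta,\beta)}\!\downarrow=S^{+}_{(\beta,\beta)}\oplus S^{-}_{(\beta,\beta)}$ with both summands simple, and one checks $S^{+}_{(\beta,\beta)}\not\cong S^{-}_{(\beta,\beta)}$ --- this is where $\cha K\neq2$ and $f_{(\beta,\beta)}(v)\neq0$ enter, via the invertible element separating the two summands, whose relevant eigenvalues are $\pm\sqrt{f_{(\beta,\beta)}(v)}$ --- and the same $\Hom$ computation shows they are distinct from all $S_{\ulam}\!\downarrow$ with $\lam^{(1)}\neq\lam^{(2)}$. Completeness holds because for any simple $\HH_q(D_n)$-module $W$ the nonzero module $W\!\uparrow$ has a simple $\HH_q(B_n)$-quotient $S_{\ulam}$, so $\Hom_{\HH_q(D_n)}(W,S_{\ulam}\!\downarrow)\neq0$ and $W$ occurs in the list; and $\HH_q(D_n)$ is split over $K$ since $\End_{\HH_q(B_n)}(S_{\ulam})=K$ and the extra datum $\sqrt{f_{(\beta,\beta)}(v)}$ already lies in $\mathcal{A}$, so no quadratic extension of $K$ is forced.

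In the general case one repeats the argument with $D_{\umu}$ ($\umu$ $e$-restricted) in place of $S_{\ulam}$. When $\mu^{(1)}\neq\mu^{(2)}$ the orbit $\{\umu,\widehat\umu\}$ has size $2$, so $D_{\umu}\!\downarrow$ stays simple; restricting $S_{\umu}\twoheadrightarrow D_{\umu}$ gives a surjection $S_{\umu}\!\downarrow\twoheadrightarrow D_{\umu}\!\downarrow$, and since $(D_{\umu}\!\downarrow)\!\uparrow$ has composition factors $D_{\umu}$ and $D_{\widehat\umu}$ with $D_{\widehat\umu}\not\cong D_{\umu}$, Frobenius reciprocity forces $\dim\Hom_{\HH_q(D_n)}(S_{\umu}\!\downarrow,D_{\umu}\!\downarrow)=1$, which with an adjunction analysis of the simple quotients of $S_{\umu}\!\downarrow$ shows $D_{\umu}\!\downarrow$ is its unique simple head. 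For $\alpha\vdash m$ $e$-restricted one works inside $S_{(\alpha,\alpha)}\!\downarrow=S^{+}\oplus S^{-}$, shows each $S^{\pm}_{(\alpha,\alpha)}$ is indecomposable (hence has a unique simple head $D^{\pm}_{(\alpha,\alpha)}$), and separates $D^{+}$ from $D^{-}$ by the $\cha K\neq2$ argument; completeness, non-redundancy and splitness of the full list then follow as in the semisimple case. \emph{The main obstacle is precisely this non-semisimple part}: Clifford theory at once delivers composition factors and semisimplifications, but controlling the actual heads and radicals of the specific modules $S_{\umu}\!\downarrow$ and $S^{\pm}_{(\alpha,\alpha)}$ --- and, above all, upgrading ``$S^{+}$ and $S^{-}$ have the same composition factors'' to ``$D^{+}_{(\alpha,\alpha)}\not\cong D^{-}_{(\alpha,\alpha)}$'' rather than possibly $D^{+}\cong D^{-}$ --- requires the separability of the extension, the hypothesis $\cha K\neq2$, and the integrality of $\sqrt{f_{(\alpha,\alpha)}(v)}$, all ultimately resting on the separation condition (\ref{separa}) which is what makes both $S_{\ulam}\!\downarrow\cong S_{\widehat\ulam}\!\downarrow$ and the splitting $S_{(\beta,\beta)}\!\downarrow=S^{+}\oplus S^{-}$ available.
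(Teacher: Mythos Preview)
The paper does not give its own proof of this lemma; it is quoted from \cite{Hu3} and used as input. So there is nothing in the present paper to compare your argument against. Your overall strategy---Clifford theory for the $\Z/2\Z$-graded extension $\HH_q(D_n)\subset\HH_q(B_n)$ together with the splitting data from \cite{Hu1,Hu3}---is the right one and is essentially how \cite{Hu3} proceeds.

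There is, however, a genuine slip in how you set up the Clifford theory. You write ``$S_{\ulam}^{\tau}\cong S_{\widehat\ulam}$'' and organize the argument around ``$\tau$-orbits'' of $\HH_q(B_n)$-simples. But in this paper $\tau$ is conjugation by $T_0$, an \emph{inner} automorphism of $\HH_q(B_n)$; hence $M^{\tau}\cong M$ for every $\HH_q(B_n)$-module $M$ (this is exactly (2.3.5)). The automorphism that moves $\ulam$ to $\widehat\ulam$ is $\sigma$ (the sign automorphism $T_0\mapsto -T_0$): one has $(D_{\umu})^{\sigma}\cong D_{\widehat\umu}$ by (2.3.2), and it is the condition $D_{\umu}\not\cong D_{\umu}^{\sigma}$ (equivalently $\mu^{(1)}\neq\mu^{(2)}$) that forces $D_{\umu}\!\downarrow$ to stay simple. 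The automorphism $\tau$, restricted to $\HH_q(D_n)$, enters in the \emph{other} direction of Clifford theory: when $D_{(\alpha,\alpha)}\!\downarrow$ splits, the two summands $D^{+}_{(\alpha,\alpha)}$ and $D^{-}_{(\alpha,\alpha)}$ are interchanged by $\tau$ (see (2.3.4)), and their non-isomorphism uses that $\sqrt{f_{(\alpha,\alpha)}(q)}\neq -\sqrt{f_{(\alpha,\alpha)}(q)}$, i.e.\ $\cha K\neq 2$ together with the invertibility (2.3.1). Once you swap $\sigma$ for $\tau$ in the restriction analysis and keep $\tau$ only for exchanging the $\pm$ pieces, your outline goes through. (Also, write $f_{(\beta,\beta)}(q)$ rather than $f_{(\beta,\beta)}(v)$ when you are working in $K$; the point is that $f_{(\beta,\beta)}(v)=g_{(\beta,\beta)}(v)^2$ in $\mathcal{A}$, so its specialization at $q$ has a square root in $K$.)
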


Therefore, we can regard the following set $$
\biggl\{S_{\ulam}\!\downarrow_{\HH_q(D_n)}, S^{+}_{(\beta,\beta)},
S^{-}_{(\beta,\beta)}\biggm|\begin{matrix}
\text{$\ulam=(\lam^{(1)},\lam^{(2)})\in\PP_n/\!{\sim}, \lam^{(1)}\neq\lam^{(2)}$, and }\\
\text{$\beta$ is a partition of $m$}\end{matrix}\biggr\}
$$
as the set of dual Specht modules for $\HH_q(D_n)$ in the separated case.\smallskip

We collect together some facts in the following lemma.

\begin{lemma} {\rm (\cite{Hu1}, \cite{Hu3})} Let $\beta$ be a partition of $m$ and $\alpha$ be an $e$-restricted
partition of $m$. Let $\ulam$ be a bipartition of $n$ and $\umu$ be an
$e$-restricted bipartition of $n$. Then we have
\begin{enumerate}
\item[(2.3.1)] $f_{\ulam}(q)$ is an invertible element in $K$;
\item[(2.3.2)] $\bigl(D_{\umu}\bigr)^{\sigma}\cong D_{\widehat{\umu}}$, $D_{\umu}\!\downarrow_{\HH_q(D_n)}\cong
D_{\widehat{\umu}}\!\downarrow_{\HH_q(D_n)}$;
\item[(2.3.3)] $D_{(\alpha,\alpha)}\!\downarrow_{\HH_q(D_n)}\cong D^{+}_{(\alpha,\alpha)}\oplus D^{-}_{(\alpha,\alpha)}$;
\item[(2.3.4)] $\bigl(S^{+}_{(\beta,\beta)}\bigr)^{\tau}\cong S^{-}_{(\beta,\beta)}$,
$\bigl(D^{+}_{(\alpha,\alpha)}\bigr)^{\tau}\cong
D^{-}_{(\alpha,\alpha)}$;
\item[(2.3.5)] $M^{\tau}\cong M$ for any $\HH_q(B_n)$-module $M$.
\end{enumerate}
\end{lemma}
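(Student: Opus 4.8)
The plan is to verify each of the five assertions separately, in each case reducing to a corresponding statement already established in \cite{Hu1} or \cite{Hu3} about the Hecke algebra of type $B_n$ and its restriction to $\HH_q(D_n)$. For (2.3.1), the key point is the separation condition (\ref{separa}): by the explicit description of $f_{\ulam}(v)$ in \cite[Lemma 3.2]{Hu1}, one checks that $f_{\ulam}(v)$ is, up to a unit in $\mathcal A$, a product of factors of the form $(1+v^i)$ with $1\leq i\leq n-1$ together with powers of $v$; specialising $v\mapsto q$ and invoking $2f_n(q)\neq 0$ in $K$ shows $f_{\ulam}(q)$ is invertible. (This same computation is carried out in more detail in Section~4 of the present paper, so here one needs only quote it.)

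For (2.3.2), I would argue as follows. The automorphism $\sigma$ fixes $\HH_q(D_n)$ pointwise, so for any $\HH_q(B_n)$-module $M$ the restrictions of $M$ and $M^{\sigma}$ to $\HH_q(D_n)$ coincide. On the other hand, $\sigma$ permutes the two ``components'' of a bipartition: conjugating the dual Specht module $S_{\umu}$ by $\sigma$ yields a module with the same composition factors as $S_{\widehat{\umu}}$, and since $S_{\umu}$ has simple head $D_{\umu}$ when $\umu$ is $e$-restricted, $\bigl(D_{\umu}\bigr)^{\sigma}$ is simple with the right labelling, hence $\cong D_{\widehat{\umu}}$. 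The restriction statement then follows by applying $\downarrow_{\HH_q(D_n)}$ and using that $\sigma$ is the identity there. The isomorphism $S_{\umu}^{\sigma}\cong S_{\widehat{\umu}}$ is exactly \cite[Corollary 3.7]{Hu3}, already quoted in the text, so (2.3.2) is essentially a restatement.

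Assertions (2.3.3) and (2.3.4) are the substantive ones and follow from the structure theory of $S_{(\alpha,\alpha)}\!\downarrow_{\HH_q(D_n)}$ recalled before the lemma. By \cite[Theorem 4.6]{Hu3} one has the direct sum decomposition $S_{(\alpha,\alpha)}\!\downarrow_{\HH_q(D_n)}=S^{+}_{(\alpha,\alpha)}\oplus S^{-}_{(\alpha,\alpha)}$, and since $\cha K\neq 2$ the two summands are interchanged by the automorphism $\tau$ (the choice of $\sqrt{f_{\ulam}(v)}$ versus $-\sqrt{f_{\ulam}(v)}$ is precisely the choice that $\tau$ flips); taking heads and using that $\tau$ fixes $\HH_q(D_n)$ setwise gives $\bigl(D^{+}_{(\alpha,\alpha)}\bigr)^{\tau}\cong D^{-}_{(\alpha,\alpha)}$, which is (2.3.4). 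For (2.3.3), note that $D_{(\alpha,\alpha)}$ is the simple head of $S_{(\alpha,\alpha)}$, so $D_{(\alpha,\alpha)}\!\downarrow_{\HH_q(D_n)}$ is the head of $S_{(\alpha,\alpha)}\!\downarrow_{\HH_q(D_n)}=S^{+}_{(\alpha,\alpha)}\oplus S^{-}_{(\alpha,\alpha)}$, hence is $D^{+}_{(\alpha,\alpha)}\oplus D^{-}_{(\alpha,\alpha)}$ provided these two are non-isomorphic; their non-isomorphism is part of the semisimple-case classification in the preceding lemma (together with the fact that Clifford theory for the index-two inclusion $\HH_q(D_n)\subset\HH_q(B_n)$ forces the restriction of the simple $D_{(\alpha,\alpha)}$ to be either simple or a sum of two non-isomorphic simples, and the ``fixed by $\tau$'' obstruction shows we are in the latter situation). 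Finally (2.3.5) is the statement that the automorphism $\tau$ of $\HH_q(B_n)$ is inner up to something acting trivially on modules — concretely, $\tau$ is conjugation by an invertible element of $\HH_q(B_n)$ (coming from $T_0$), so every $\HH_q(B_n)$-module is $\tau$-stable; this is recorded in \cite{Hu1}. The main obstacle is ensuring in (2.3.3)–(2.3.4) that the two halves $D^{\pm}_{(\alpha,\alpha)}$ are genuinely non-isomorphic and that $\tau$ really does swap the $S^{\pm}$; both hinge on tracking the sign ambiguity in $\sqrt{f_{\ulam}(v)}$ through the construction in \cite{Hu3}, and $\cha K\neq 2$ is used exactly to keep the two signs distinct.
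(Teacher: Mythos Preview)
The paper does not supply a proof of this lemma at all: it is stated with attribution to \cite{Hu1} and \cite{Hu3} as a list of facts to be quoted, followed only by the remark that all parts except (2.3.5) rely on the separation hypothesis (\ref{separa}). So there is no in-paper argument to compare your proposal against; your write-up is already more than the paper offers.

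Judged on its own terms, your sketch is largely sound. Your argument for (2.3.5) is exactly right and complete: $\tau$ is conjugation by $T_0$ (here $T_0^2=1$), so every $\HH_q(B_n)$-module is $\tau$-stable. Your treatment of (2.3.2) is also correct. For (2.3.3) there is a small gap: you write that $D_{(\alpha,\alpha)}\!\downarrow$ ``is the head of'' $S_{(\alpha,\alpha)}\!\downarrow$, but restriction does not in general commute with taking heads---you only get that $D_{(\alpha,\alpha)}\!\downarrow$ is a \emph{quotient} of $S^{+}\oplus S^{-}$. The Clifford-theory argument you then invoke is what actually carries the weight (the restriction of a simple along an index-two inclusion with $\cha K\neq 2$ is semisimple with one or two non-isomorphic summands), and combined with the non-isomorphism of $D^{\pm}_{(\alpha,\alpha)}$ from Lemma~2.2 this finishes the job; just reorganise so that Clifford theory is the main step rather than an afterthought. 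For (2.3.1), your description of $f_{\ulam}(v)$ as ``up to a unit, a product of factors $(1+v^i)$'' is not something you can read off from Section~4 of this paper (Theorem~\ref{main2} expresses $f_{\ulam}$ as a ratio of Schur elements, and cancelling the hook-length factors to reach that form is nontrivial); the invertibility under (\ref{separa}) is precisely the content of \cite[Lemma~3.2]{Hu1}, so it is cleaner simply to cite that.
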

Note that except for (2.3.5), all the claims in the above lemma depend on the validity of our assumption (\ref{separa}).
\smallskip

Recall the Morita equivalence (\ref{MoritaEquivalence}) proved in
\cite{Hu1}. Let $F$ be the resulting functor from the category of
finite dimensional $\HH_q(D_n)$-modules to the category of finite
dimensional modules over
$A(m)\oplus\bigoplus_{a=m+1}^{n}\HH_q(\BS_{(a,n-a)})$. For any
$\ulam, \umu\in\PP_n$ with $\umu$ being $e$-restricted, we define $$
S(\ulam):=S_{\lam^{(1)}}\otimes \widehat{S}_{\lam^{(2)}},\quad
D(\umu):=D_{\mu^{(1)}}\otimes\widehat{D}_{\mu^{(2)}},
$$
if $|\lam^{(1)}|\neq |\lam^{(2)}|, |\mu^{(1)}|\neq |\mu^{(2)}|$; and
$$
S(\ulam):=\bigl(S_{\lam^{(1)}}\otimes
\widehat{S}_{\lam^{(2)}}\bigr)\uparrow_{\HH_q(\BS_{(m,m)})}^{A(m)},\quad
D(\umu):=\bigl(D_{\mu^{(1)}}\otimes
\widehat{D}_{\mu^{(2)}}\bigr)\uparrow_{\HH_q(\BS_{(m,m)})}^{A(m)},
$$ if $|\lam^{(1)}|=|\lam^{(2)}|=m=|\mu^{(1)}|=|\mu^{(2)}|$.

For each integer $i$ with $1\leq i\leq m-1$, we define
$s_i=(i,i+1)$. Then $\{s_1,s_2,\cdots,s_{m-1}\}$ is the set of all the
simple reflections in $\BS_m$. A word $w=s_{i_1}\cdots s_{i_k}$ for
$w\in\BS_{m}$ is a reduced expression if $k$ is minimal; in this
case we say that $w$ has length $k$ and we write $\ell(w)=k$. Given
a reduced expression $s_{i_1}\cdots s_{i_k}$ for $w\in\BS_m$, we
write $T_w=T_{i_1}\cdots T_{i_k}$. It is well-known that
$\{T_w|w\in\BS_m\}$ forms a $K$-basis of $\HH_q(\BS_m)$.
Let $\beta$ be a partition of $m$. Let $\BS_{\beta}$ be the Young subgroup of $\BS_m$ corresponding to
$\beta$. We set $$
x_{\beta}:=\sum_{w\in\BS_{\beta}}T_w,\quad y_{\beta}:=\sum_{w\in\BS_{\beta}}(-q)^{-\ell(w)}T_w.
$$
Let $\ft^{\beta}$ (resp., $\ft_{\beta}$) be the standard $\beta$-tableau in which the numbers $1,2,\cdots,m$ appear in
order
along successive rows (resp., columns). Let $w_{\beta}\in\BS_{m}$ be such that $\ft^{\beta}w_{\beta}=\ft_{\beta}$.
If $\alpha$ is an $e$-restricted partition of $m$, then we have the following direct sum decompositions of
$A(m)$-modules: $$
S(\beta,\beta)=S(\beta,\beta)_{+}\oplus S(\beta,\beta)_{-},\quad D(\alpha,\alpha)=D(\alpha,\alpha)_{+}
\oplus D(\alpha,\alpha)_{-},
$$
where $$\begin{aligned}
S(\beta,\beta)_{\pm}:&=\Bigl(\sqrt{f_{(\beta,\beta)}(q)}z'_{\beta}\otimes \widehat{z'_{\beta}}\pm (z'_{\beta}\otimes
\widehat{z'_{\beta}})h(m)\Bigr)\HH_q(\BS_{(m,m)}),\\
D(\alpha,\alpha)_{\pm}:&=\Bigl(\sqrt{f_{(\alpha,\alpha)}(q)}\overline{z'_{\alpha}}\otimes
\widehat{\overline{z'_{\alpha}}}\pm (\overline{z'_{\alpha}}\otimes \widehat{\overline{z'_{\alpha}}})h(m)\Bigr)
\HH_q(\BS_{(m,m)}),
\end{aligned}
$$
and $z'_{\beta}:=y_{\beta'}T_{w_{\beta'}}x_{\beta}$, $\beta'$ is the conjugate partition of $\beta$,
$\widehat{z'_{\beta}}$ is the similarly defined generator for the
dual Specht module $\widehat{S}_{\beta}$ of
$\HH_q(\BS_{(m+1,\cdots,n)})$, $\overline{z'_{\alpha}}$ and
$\widehat{\overline{z'_{\alpha}}}$ are the canonical images of
$z'_{\alpha}$ and $\widehat{z'_{\alpha}}$ in $D_{\alpha}$ and $\widehat{D}_{\alpha}$ respectively. Note that by
\cite[Theorem 3.5]{DJ2} and \cite[(5.2), (5.3)]{Mu}, the right ideal of $\HH_q(\BS_m)$ generated  by $z'_{\beta}$ is
isomorphic to the dual Specht module ${S}_{\beta}$, and the dual Specht module ${S}_{\beta}$ is also isomorphic to the
right ideal of $\HH_q(\BS_m)$ generated by
$x_{\beta'}^{\#}T_{w_{\beta'}}^{\#}y_{\beta}^{\#}$. Here ``$\#$" denotes the $K$-algebra automorphism of $\HH_q(\BS_n)$
which is defined on generators by $T_i^{\#}
:=(-q)T_i^{-1}$ for each integer $i$ with $1\leq i\leq m-1$. We take this chance to point out that the
definition of $S(\lam)_{\pm}$ and $D(\mu)_{\pm}$ given in \cite[p. 428, lines 25, 26]{Hu1} are not correct (although this does not
affect any other results in \cite{Hu1}). The correct definition
should be as what we have given above. By direct verification, we
see that $$ F\bigl(S_{(\beta,\beta)}^{\pm}\bigr)\cong
S(\beta,\beta)_{\pm},\quad
F\bigl(D_{(\alpha,\alpha)}^{\pm}\bigr)\cong D(\alpha,\alpha)_{\pm},
$$
and for any $\ulam, \umu\in\PP_n$ with $\umu$ being $e$-restricted,
$$ F\bigl(S_{\ulam}\!\downarrow_{\HH_q(D_n)}\bigr)\cong
S(\ulam),\quad F\bigl(D_{\umu}\!\downarrow_{\HH_q(D_n)}\bigr)\cong
D(\umu).
$$
The following lemma is a direct consequence of this Morita equivalence.

\begin{lemma} \label{decnumodd1} Let $\ulam$ be a bipartition of $n$ and $\umu$ an $e$-restricted bipartition of $n$. Let
$\beta$ be a partition of $m$ and $\alpha$ an $e$-restricted partition of $m$. Then we have \begin{enumerate}
\item[(2.4.1)] if $|\lam^{(1)}|\neq |\lam^{(2)}|$ and $\mu^{(1)}\neq\mu^{(2)}$, then $$\begin{aligned} &
\bigl[S_{\ulam}\downarrow_{\HH_q(D_n)}:D_{\umu}\downarrow_{\HH_q(D_n)}\bigr]=\begin{cases}\bigl[S_{\lam^{(1)}}:
D_{\mu^{(1)}}\bigr]
    \bigl[\widehat{S}_{\lam^{(2)}}:
    \widehat{D}_{\mu^{(2)}}\bigr], &\text{if \text{$\begin{matrix}\text{$|\lam^{(1)}|=|\mu^{(1)}|$ $\&$}\\
    \text{$|\lam^{(2)}|=|\mu^{(2)}|$}\end{matrix}$,}}\\
    0, &\text{otherwise;}
    \end{cases}\\
    & \bigl[S_{\ulam}\downarrow_{\HH_q(D_n)}:D^{+}_{(\alpha,\alpha)}\bigr]=0=\bigl[S_{\ulam}\downarrow_{\HH_q(D_n)}:
    D^{-}_{(\alpha,\alpha)}\bigr];
\end{aligned} $$
\item[(2.4.2)] if $|\lam^{(1)}|=|\lam^{(2)}|=m$, $\lam^{(1)}\neq\lam^{(2)}$ and $|\mu^{(1)}|\neq |\mu^{(2)}|$, then
$$\begin{aligned}
&\bigl[S_{\ulam}\downarrow_{\HH_q(D_n)}:D_{\umu}\downarrow_{\HH_q(D_n)}\bigr]=0;\\
&\bigl[S^{+}_{(\beta,\beta)}:D_{\umu}\downarrow_{\HH_q(D_n)}\bigr]=0=\bigl[S^{-}_{(\beta,\beta)}:
D_{\umu}\downarrow_{\HH_q(D_n)}\bigr].
\end{aligned}$$
\end{enumerate}
\end{lemma}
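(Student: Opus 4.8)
\emph{Overview.} This lemma should be the purely formal shadow of the Morita equivalence (\ref{MoritaEquivalence}), and I would prove it in three moves. First, use that $F$ is an equivalence of module categories --- hence exact and inducing a bijection on simple modules --- to replace every decomposition number appearing in (2.4.1)--(2.4.2) by a composition multiplicity over the algebra $B:=A(m)\oplus\bigoplus_{a=m+1}^{n}\HH_q(\BS_{(a,n-a)})$. Second, check that each of the modules $S(\ulam)$, $D(\umu)$, $S(\beta,\beta)_{\pm}$, $D(\alpha,\alpha)_{\pm}$ is supported on a single direct summand of $B$, so that a composition multiplicity vanishes automatically whenever its source and target are supported on different summands. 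Third, on the one surviving summand $\HH_q(\BS_{(a,n-a)})\cong\HH_q(\BS_{a})\otimes_{K}\HH_q(\BS_{\{a+1,\cdots,n\}})$, exploit that the Hecke algebras of type $A$ are split over $K$ to factor the remaining multiplicity as a product.

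\emph{First move.} I would invoke the identifications already recorded, $F(S_{\ulam}\!\downarrow_{\HH_q(D_n)})\cong S(\ulam)$, $F(D_{\umu}\!\downarrow_{\HH_q(D_n)})\cong D(\umu)$, $F(S^{\pm}_{(\beta,\beta)})\cong S(\beta,\beta)_{\pm}$ and $F(D^{\pm}_{(\alpha,\alpha)})\cong D(\alpha,\alpha)_{\pm}$, together with the fact that an exact functor sending simple objects bijectively to simple objects carries a composition series to a composition series; hence $[M:L]=[F(M):F(L)]$ for every finite dimensional $\HH_q(D_n)$-module $M$ and every simple $\HH_q(D_n)$-module $L$, and the left-hand sides of (2.4.1)--(2.4.2) become composition multiplicities of $B$-modules. (For an $e$-restricted $\umu$ with $\mu^{(1)}\neq\mu^{(2)}$ but $|\mu^{(1)}|=|\mu^{(2)}|=m$, the module $F(D_{\umu}\!\downarrow_{\HH_q(D_n)})$ is the induced module $(D_{\mu^{(1)}}\otimes\widehat{D}_{\mu^{(2)}})\!\uparrow^{A(m)}_{\HH_q(\BS_{(m,m)})}$, which also lies over the summand $A(m)$.)

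\emph{Second move.} A module over a direct sum of algebras decomposes canonically along the identity elements of the summands, and a simple module is supported on exactly one summand; so a composition multiplicity $[M:L]$ vanishes unless the summand supporting $L$ occurs in the support of $M$. After replacing $\ulam,\umu$ by the representatives with $|\lam^{(1)}|\geq|\lam^{(2)}|$ and $|\mu^{(1)}|\geq|\mu^{(2)}|$ --- legitimate by $S_{\ulam}\!\downarrow_{\HH_q(D_n)}\cong S_{\widehat{\ulam}}\!\downarrow_{\HH_q(D_n)}$ and (2.3.2), and also what makes the notation $S(\ulam),D(\umu)$ unambiguous as $B$-modules --- one sees that $S(\ulam)$ is supported on $\HH_q(\BS_{(|\lam^{(1)}|,|\lam^{(2)}|)})$ when $|\lam^{(1)}|>|\lam^{(2)}|$ and on $A(m)$ when $|\lam^{(1)}|=|\lam^{(2)}|=m$, that $S(\beta,\beta)_{\pm}$, $D(\alpha,\alpha)_{\pm}$ and $D(\umu)$ with $|\mu^{(1)}|=|\mu^{(2)}|=m$ are supported on $A(m)$, and that $D(\umu)$ with $|\mu^{(1)}|>|\mu^{(2)}|$ is supported on $\HH_q(\BS_{(|\mu^{(1)}|,|\mu^{(2)}|)})$. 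All the vanishing assertions then follow: in (2.4.1), $|\lam^{(1)}|>|\lam^{(2)}|$ forces $|\lam^{(1)}|>m$, so $S(\ulam)$ and $D(\umu)$ can share a summand only if $|\lam^{(1)}|=|\mu^{(1)}|$, and $S(\ulam)$ never meets $A(m)$, so $[S(\ulam):D(\alpha,\alpha)_{\pm}]=0$; in (2.4.2), $S(\ulam)$ (with $|\lam^{(1)}|=|\lam^{(2)}|=m$) and $S(\beta,\beta)_{\pm}$ sit on $A(m)$ while $D(\umu)$ with $|\mu^{(1)}|\neq|\mu^{(2)}|$ sits on some $\HH_q(\BS_{(b,n-b)})$ with $b>m$.

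\emph{Third move, and where the difficulty is.} It remains to evaluate $[S(\ulam):D(\umu)]$ in (2.4.1) when $a:=|\lam^{(1)}|=|\mu^{(1)}|$ (hence also $|\lam^{(2)}|=|\mu^{(2)}|$, the pairs summing to $n$). Here $S(\ulam)=S_{\lam^{(1)}}\otimes\widehat{S}_{\lam^{(2)}}$ and $D(\umu)=D_{\mu^{(1)}}\otimes\widehat{D}_{\mu^{(2)}}$ are modules over $\HH_q(\BS_{(a,n-a)})\cong\HH_q(\BS_{a})\otimes_{K}\HH_q(\BS_{\{a+1,\cdots,n\}})$, and I would use that, the type $A$ Hecke algebras being split over $K$, the outer tensor product of two simple modules over the factors is simple, every simple $\HH_q(\BS_{(a,n-a)})$-module is of this form, and the composition factors of $S_{\lam^{(1)}}\otimes\widehat{S}_{\lam^{(2)}}$ are exactly the outer tensor products of those of $S_{\lam^{(1)}}$ with those of $\widehat{S}_{\lam^{(2)}}$, with multiplicities multiplying; this gives $[S(\ulam):D(\umu)]=[S_{\lam^{(1)}}:D_{\mu^{(1)}}]\,[\widehat{S}_{\lam^{(2)}}:\widehat{D}_{\mu^{(2)}}]$, completing the proof. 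I do not anticipate any genuine obstacle: the content is entirely in (\ref{MoritaEquivalence}) and in the identification of $F$ on the modules concerned, both already available, and the only auxiliary inputs --- multiplicativity of decomposition numbers under outer tensor products, valid since the tensor factors are split over $K$, and the harmless normalization of representatives via $S_{\ulam}\!\downarrow_{\HH_q(D_n)}\cong S_{\widehat{\ulam}}\!\downarrow_{\HH_q(D_n)}$ and (2.3.2) --- are routine. If anything is delicate, it is only the bookkeeping of which summand of $B$ each module lives on.
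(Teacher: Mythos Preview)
Your proposal is correct and takes essentially the same approach as the paper: the paper states this lemma as ``a direct consequence of this Morita equivalence'' and gives no further proof, and your three moves are exactly the unpacking of what ``direct consequence'' means here. The only thing worth noting is that the paper leaves implicit the normalization of $\ulam,\umu$ to representatives with $|\lam^{(1)}|\geq|\lam^{(2)}|$ and $|\mu^{(1)}|\geq|\mu^{(2)}|$ (so that the formula in (2.4.1) reads correctly), which you have rightly made explicit via $S_{\ulam}\!\downarrow\cong S_{\widehat{\ulam}}\!\downarrow$ and (2.3.2).
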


\begin{theorem} \label{decnumodd2} Let $\ulam$ be a bipartition of $n$ with $|\lam^{(1)}|=|\lam^{(2)}|=m$. Let $\umu$
be an $e$-restricted bipartition of $n$
with $|\mu^{(1)}|=|\mu^{(2)}|=m$. Let $\beta$ be a partition of $m$
and $\alpha$ an $e$-restricted partition of $m$. Then we have
\begin{enumerate}
\item[(2.5.1)] if $\lam^{(1)}\neq\lam^{(2)}$ and $\mu^{(1)}\neq\mu^{(2)}$, then $$\begin{aligned} &
\bigl[S_{\ulam}\downarrow_{\HH_q(D_n)}:D_{\umu}\downarrow_{\HH_q(D_n)}\bigr]=
    \bigl[S_{\lam^{(1)}}:D_{\mu^{(1)}}\bigr]\bigl[\widehat{S}_{\lam^{(2)}}:
    \widehat{D}_{\mu^{(2)}}\bigr]+\bigl[S_{\lam^{(1)}}:D_{\mu^{(2)}}\bigr]\bigl[\widehat{S}_{\lam^{(2)}}:
    \widehat{D}_{\mu^{(1)}}\bigr], \\
    & \bigl[S_{\ulam}\downarrow_{\HH_q(D_n)}:D^{+}_{(\alpha,\alpha)}\bigr]=\bigl[S_{\ulam}\downarrow_{\HH_q(D_n)}:
    D^{-}_{(\alpha,\alpha)}\bigr]=
    \bigl[S_{\lam^{(1)}}:D_{\alpha}\bigr]\bigl[\widehat{S}_{\lam^{(2)}}:
    \widehat{D}_{\alpha}\bigr]/2;\end{aligned} $$
\item[(2.5.2)] if $\mu^{(1)}\neq\mu^{(2)}$, then
$$\begin{aligned} &   \bigl[S^{+}_{(\beta,\beta)}:D_{\umu}\downarrow_{\HH_q(D_n)}\bigr]=
    \bigl[S_{\beta}:D_{\mu^{(1)}}\bigr]\bigl[\widehat{S}_{\beta}:
    \widehat{D}_{\mu^{(2)}}\bigr]/2+\bigl[S_{\beta}:D_{\mu^{(2)}}\bigr]\bigl[\widehat{S}_{\beta}:
    \widehat{D}_{\mu^{(1)}}\bigr]/2,\\
    & \bigl[S^{-}_{(\beta,\beta)}:D_{\umu}\downarrow_{\HH_q(D_n)}\bigr]=\bigl[S^{+}_{(\beta,\beta)}:
    D_{\umu}\downarrow_{\HH_q(D_n)}\bigr];
    \end{aligned}
$$
\end{enumerate}
\end{theorem}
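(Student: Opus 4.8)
The plan is to deduce the statement from the Morita equivalence \eqref{MoritaEquivalence} together with a Clifford-theoretic analysis of the inclusion $\HH_q(\BS_{(m,m)})\subseteq A(m)$. Since the functor $F$ is an equivalence of categories it preserves composition-factor multiplicities, and every module occurring in Theorem \ref{decnumodd2} (those indexed by bipartitions all of whose components have size $m$, and by partitions of $m$) is carried by $F$ into the summand $A(m)$. So, using the identifications $F(S_{\ulam}\!\downarrow_{\HH_q(D_n)})\cong S(\ulam)$, $F(D_{\umu}\!\downarrow_{\HH_q(D_n)})\cong D(\umu)$, $F(S^{\pm}_{(\beta,\beta)})\cong S(\beta,\beta)_{\pm}$ and $F(D^{\pm}_{(\alpha,\alpha)})\cong D(\alpha,\alpha)_{\pm}$ recorded above, it suffices to compute the corresponding decomposition numbers inside $A(m)\text{-mod}$.

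First I would set up the Clifford theory. By the construction of $h(m)$ in \cite{Hu1}, $A(m)=\HH_q(\BS_{(m,m)})\oplus\HH_q(\BS_{(m,m)})h(m)$ is free of rank $2$ over $\HH_q(\BS_{(m,m)})\cong\HH_q(\BS_m)\otimes\HH_q(\BS_{\{m+1,\dots,n\}})$, conjugation by the invertible element $h(m)$ induces on $\HH_q(\BS_{(m,m)})$ the automorphism $\theta$ interchanging the two tensor factors, and $h(m)^2$ lies back in $\HH_q(\BS_{(m,m)})$. Hence induction $(-)\!\uparrow^{A(m)}_{\HH_q(\BS_{(m,m)})}$ is exact, Mackey's theorem gives $(V\!\uparrow^{A(m)})\!\downarrow_{\HH_q(\BS_{(m,m)})}\cong V\oplus{}^{\theta}V$ for every $\HH_q(\BS_{(m,m)})$-module $V$, and — since $\cha K\neq 2$ and the order-$2$ cocycle obstruction is trivialised by the square roots $\sqrt{f_{(\beta,\beta)}(v)}$ fixed above — for simple $V$: if $V\not\cong{}^{\theta}V$ then $V\!\uparrow^{A(m)}$ is simple and isomorphic to $({}^{\theta}V)\!\uparrow^{A(m)}$, while if $V\cong{}^{\theta}V$ then $V\!\uparrow^{A(m)}$ splits as a direct sum of two non-isomorphic simple modules, each restricting to $V$. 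As $\HH_q(\BS_m)$ is split over $K$, the simple $\HH_q(\BS_{(m,m)})$-modules are exactly the $D_{\nu^{(1)}}\otimes\widehat{D}_{\nu^{(2)}}$, with $(D_{\nu^{(1)}}\otimes\widehat{D}_{\nu^{(2)}})^{\theta}\cong D_{\nu^{(2)}}\otimes\widehat{D}_{\nu^{(1)}}$; such a module is $\theta$-stable iff $\nu^{(1)}=\nu^{(2)}$, and then $(D_{\alpha}\otimes\widehat{D}_{\alpha})\!\uparrow^{A(m)}\cong D(\alpha,\alpha)_{+}\oplus D(\alpha,\alpha)_{-}$ (the $\pm$ labelling matching the two choices $\pm\sqrt{f_{(\alpha,\alpha)}(v)}$), whereas for $\nu^{(1)}\neq\nu^{(2)}$ it equals the simple module $D(\nu^{(1)},\nu^{(2)})$.

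The identities then follow by tracking composition factors through this exact induction functor. For (2.5.1), I would take a composition series of the $\HH_q(\BS_{(m,m)})$-module $S_{\lam^{(1)}}\otimes\widehat{S}_{\lam^{(2)}}$, whose factor $D_{\nu^{(1)}}\otimes\widehat{D}_{\nu^{(2)}}$ occurs $[S_{\lam^{(1)}}:D_{\nu^{(1)}}]\,[\widehat{S}_{\lam^{(2)}}:\widehat{D}_{\nu^{(2)}}]$ times, and apply $(-)\!\uparrow^{A(m)}$ to obtain a filtration of $S(\ulam)=(S_{\lam^{(1)}}\otimes\widehat{S}_{\lam^{(2)}})\!\uparrow^{A(m)}$. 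For an $e$-restricted $\umu$ with $\mu^{(1)}\neq\mu^{(2)}$ the subquotients contributing $D(\umu)$ are those arising from $\nu=(\mu^{(1)},\mu^{(2)})$ and from $\nu=(\mu^{(2)},\mu^{(1)})$, which yields the first displayed formula; the factor $D_{\alpha}\otimes\widehat{D}_{\alpha}$ contributes $D(\alpha,\alpha)_{+}\oplus D(\alpha,\alpha)_{-}$, and combining this count with the facts that $S_{\ulam}\!\downarrow_{\HH_q(D_n)}$ is $\tau$-stable (2.3.5) while $\tau$ interchanges $D^{+}_{(\alpha,\alpha)}$ and $D^{-}_{(\alpha,\alpha)}$ (2.3.4) — so that $[S_{\ulam}\!\downarrow:D^{+}_{(\alpha,\alpha)}]=[S_{\ulam}\!\downarrow:D^{-}_{(\alpha,\alpha)}]$ — pins down the second displayed formula. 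For (2.5.2) I would argue identically with $S(\beta,\beta)=(S_{\beta}\otimes\widehat{S}_{\beta})\!\uparrow^{A(m)}=S(\beta,\beta)_{+}\oplus S(\beta,\beta)_{-}$, where the multiplicity data is symmetric in the two tensor components; the total multiplicity of $D(\umu)$ in $S(\beta,\beta)$ is again a two-term sum, and the $\tau$-action, which interchanges $S(\beta,\beta)_{+}$ and $S(\beta,\beta)_{-}$ while fixing $D(\umu)$, distributes it equally between the two summands. Finally $[\widehat{S}_{\rho}:\widehat{D}_{\sigma}]=[S_{\rho}:D_{\sigma}]$ under the shift isomorphism $\HH_q(\BS_m)\cong\HH_q(\BS_{\{m+1,\dots,n\}})$ puts the formulae in the stated shape.

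The bookkeeping above is routine; the genuine input — which I would quote from \cite{Hu1} rather than reprove — is the description of $A(m)$ as a $\mathbb{Z}/2$-crossed product of $\HH_q(\BS_{(m,m)})$ by the swap $\theta$ (that $h(m)$ normalises $\HH_q(\BS_{(m,m)})$ inducing exactly $\theta$ and that $h(m)^2$ returns to $\HH_q(\BS_{(m,m)})$), and the compatibility of the Clifford splitting of $(D_{\alpha}\otimes\widehat{D}_{\alpha})\!\uparrow^{A(m)}$ with the explicit summands $D(\alpha,\alpha)_{\pm}$ defined above — equivalently, that the cocycle obstruction to splitting is governed precisely by $f_{(\alpha,\alpha)}(v)$ being a square in $\mathcal{A}$, which is \cite[Theorem 3.5]{Hu1}. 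That verification, rather than the final multiplicity count, is where I expect the real work (and where the separation hypothesis \eqref{separa} and $\cha K\neq 2$ are both used); granting it, the decomposition-number identities are an immediate consequence.
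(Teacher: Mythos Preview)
Your proposal is correct and follows essentially the same approach as the paper. Both arguments transport the problem to $A(m)$ via the Morita equivalence $F$, obtain the first equality in (2.5.1) from the exactness of $\uparrow^{A(m)}_{\HH_q(\BS_{(m,m)})}$, and derive the remaining equalities by combining the $\tau$-symmetry (2.3.4), (2.3.5) with a computation of the total multiplicity $[\,\cdot:D^{+}]+[\,\cdot:D^{-}]$ (resp.\ $[S^{+}:\cdot\,]+[S^{-}:\cdot\,]$); the only cosmetic difference is that the paper obtains these totals by passing through the type $B$ Morita equivalence $\HH_q(B_n)\overset{Morita}{\sim}\bigoplus_{a}\HH_q(\BS_{(a,n-a)})$, whereas you obtain them directly from the Clifford-theoretic description of $A(m)$ over $\HH_q(\BS_{(m,m)})$---these two routes to the same total are equivalent.
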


\begin{proof} Using the functor $F$, it is easy to see that the first equality in (2.5.1) follows from the exactness of
the induction functor
$\uparrow\!_{\HH_q(\BS_{(m,m)})}^{A(m)}$. The other equalities follow from (2.3.4), (2.3.5) and the following Morita
equivalence (\cite{DJ3}): \addtocounter{equation}{4}
\begin{equation}\label{MoritaB}
\HH_{q}(B_n)\overset{Morita}{\sim}\bigoplus_{a=0}^{n}\HH_q(\BS_{(a,n-a)}).
\end{equation}
\end{proof}

It remains to determine the decomposition numbers $$
\bigl[S^{+}_{(\beta,\beta)}:D^{\pm}_{(\alpha,\alpha)}\bigr],\quad
\bigl[S^{-}_{(\beta,\beta)}:D^{\pm}_{(\alpha,\alpha)}\bigr].
$$

\addtocounter{theorem}{1}
\begin{lemma} \label{decnumeven1} Let $\beta$ be a partition of $m$ and $\alpha$ an $e$-restricted partition of $m$.
Then we have \begin{enumerate}
\item[(2.7.1)] $\bigl[S^{+}_{(\beta,\beta)}:D^{+}_{(\alpha,\alpha)}\bigr]=\bigl[S^{-}_{(\beta,\beta)}:D^{-}_{(\alpha,
\alpha)}\bigr]$;
\item[(2.7.2)] $\bigl[S^{+}_{(\beta,\beta)}:D^{-}_{(\alpha,\alpha)}\bigr]=\bigl[S^{-}_{(\beta,\beta)}:D^{+}_{(\alpha,
\alpha)}\bigr]$;
\item[(2.7.3)] $\bigl[S^{+}_{(\beta,\beta)}:D^{+}_{(\alpha,\alpha)}\bigr]+\bigl[S^{+}_{(\beta,\beta)}:D^{-}_{(\alpha,
\alpha)}\bigr]=
\bigl[S_{\beta}:D_{\alpha}\bigr]^2$.
\end{enumerate}
\end{lemma}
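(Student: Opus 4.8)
The plan is to exploit the $\tau$-action together with the direct sum decompositions of $S_{(\beta,\beta)}\!\downarrow_{\HH_q(D_n)}$ and $D_{(\alpha,\alpha)}\!\downarrow_{\HH_q(D_n)}$ recorded in Lemma 2.3. First I would establish (2.7.1) and (2.7.2) simultaneously. By (2.3.4) we have $\bigl(S^{+}_{(\beta,\beta)}\bigr)^{\tau}\cong S^{-}_{(\beta,\beta)}$ and $\bigl(D^{+}_{(\alpha,\alpha)}\bigr)^{\tau}\cong D^{-}_{(\alpha,\alpha)}$; applying the automorphism $\tau$ is an exact self-equivalence of the category of finite dimensional $\HH_q(D_n)$-modules, so it preserves composition multiplicities. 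Hence
$$
\bigl[S^{+}_{(\beta,\beta)}:D^{+}_{(\alpha,\alpha)}\bigr]
=\bigl[\bigl(S^{+}_{(\beta,\beta)}\bigr)^{\tau}:\bigl(D^{+}_{(\alpha,\alpha)}\bigr)^{\tau}\bigr]
=\bigl[S^{-}_{(\beta,\beta)}:D^{-}_{(\alpha,\alpha)}\bigr],
$$
which is (2.7.1), and likewise $\bigl[S^{+}_{(\beta,\beta)}:D^{-}_{(\alpha,\alpha)}\bigr]=\bigl[S^{-}_{(\beta,\beta)}:D^{+}_{(\alpha,\alpha)}\bigr]$, which is (2.7.2). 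Here I should double-check that $\tau$ really does restrict to an automorphism of $\HH_q(D_n)$ fixing it setwise — this is stated in the text ($\tau(\HH_q(D_n))=\HH_q(D_n)$) — so twisting by $\tau$ makes sense as an endofunctor, and it is its own inverse up to the relevant identifications.

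For (2.7.3) the idea is to add up the two decompositions. On the Specht side, $S_{(\beta,\beta)}\!\downarrow_{\HH_q(D_n)}=S^{+}_{(\beta,\beta)}\oplus S^{-}_{(\beta,\beta)}$; on the simple side, $D_{(\alpha,\alpha)}\!\downarrow_{\HH_q(D_n)}\cong D^{+}_{(\alpha,\alpha)}\oplus D^{-}_{(\alpha,\alpha)}$ by (2.3.3), and by Lemma 2.1 the modules $D^{\pm}_{(\alpha,\alpha)}$ are non-isomorphic simple $\HH_q(D_n)$-modules (and distinct from the other simples in the list). Therefore
$$
\bigl[S_{(\beta,\beta)}\!\downarrow_{\HH_q(D_n)}:D^{+}_{(\alpha,\alpha)}\bigr]
=\bigl[S^{+}_{(\beta,\beta)}:D^{+}_{(\alpha,\alpha)}\bigr]+\bigl[S^{-}_{(\beta,\beta)}:D^{+}_{(\alpha,\alpha)}\bigr],
$$
and by (2.7.1)–(2.7.2) this equals $\bigl[S^{+}_{(\beta,\beta)}:D^{+}_{(\alpha,\alpha)}\bigr]+\bigl[S^{+}_{(\beta,\beta)}:D^{-}_{(\alpha,\alpha)}\bigr]$. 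So it remains to identify the left-hand side with $\bigl[S_{\beta}:D_{\alpha}\bigr]^2$. For this I would compute the multiplicity of the simple $\HH_q(B_n)$-module $D_{(\alpha,\alpha)}$ in $S_{(\beta,\beta)}$ as an $\HH_q(B_n)$-module, and then relate $\HH_q(B_n)$-multiplicities to $\HH_q(D_n)$-multiplicities upon restriction. Via the Morita equivalence (2.6), $\bigl[S_{(\beta,\beta)}:D_{(\alpha,\alpha)}\bigr]_{\HH_q(B_n)}=\bigl[S_{\beta}\otimes\widehat{S}_{\beta}:D_{\alpha}\otimes\widehat{D}_{\alpha}\bigr]=\bigl[S_{\beta}:D_{\alpha}\bigr]\bigl[\widehat{S}_{\beta}:\widehat{D}_{\alpha}\bigr]=\bigl[S_{\beta}:D_{\alpha}\bigr]^2$, using that decomposition numbers of the parabolic $\HH_q(\BS_m)$ and $\HH_q(\BS_{\{m+1,\dots,n\}})$ agree. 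Then, since $D_{(\alpha,\alpha)}\!\downarrow_{\HH_q(D_n)}=D^{+}_{(\alpha,\alpha)}\oplus D^{-}_{(\alpha,\alpha)}$ with distinct simple summands, and since by (2.3.2)/Lemma 2.1 the only $e$-restricted bipartition $\umu$ with $D_{\umu}\!\downarrow_{\HH_q(D_n)}$ involving $D^{+}_{(\alpha,\alpha)}$ as a constituent is $\umu=(\alpha,\alpha)$ itself, a composition series of $S_{(\beta,\beta)}$ over $\HH_q(B_n)$ restricts to one over $\HH_q(D_n)$ in which $D^{+}_{(\alpha,\alpha)}$ occurs exactly $\bigl[S_{(\beta,\beta)}:D_{(\alpha,\alpha)}\bigr]_{\HH_q(B_n)}$ times. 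Combining, $\bigl[S_{(\beta,\beta)}\!\downarrow_{\HH_q(D_n)}:D^{+}_{(\alpha,\alpha)}\bigr]=\bigl[S_{\beta}:D_{\alpha}\bigr]^2$, giving (2.7.3).

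The step I expect to be the main obstacle is the last one: showing cleanly that on restricting a composition series of $S_{(\beta,\beta)}$ from $\HH_q(B_n)$ to $\HH_q(D_n)$ no \emph{extra} copies of $D^{\pm}_{(\alpha,\alpha)}$ appear from the restrictions of the \emph{other} simple $\HH_q(B_n)$-constituents $D_{\umu}$. This needs the classification in Lemma 2.1 together with (2.3.2): for $\umu$ with $\mu^{(1)}\neq\mu^{(2)}$ the restriction $D_{\umu}\!\downarrow_{\HH_q(D_n)}$ is simple and of a different isomorphism type, and $D_{(\alpha',\alpha')}\!\downarrow$ for $\alpha'\neq\alpha$ splits into $D^{\pm}_{(\alpha',\alpha')}$, again different. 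It may be cleanest to phrase this via the adjunction $\Hom_{\HH_q(D_n)}(S_{(\beta,\beta)}\!\downarrow,-)\cong\Hom_{\HH_q(B_n)}(S_{(\beta,\beta)},-\!\uparrow)$ and Frobenius reciprocity, counting $[S_{(\beta,\beta)}\!\downarrow:D^{+}_{(\alpha,\alpha)}]$ directly; alternatively one can invoke the functor $F$ and carry out the count on the $A(m)$-side using the explicit generators $S(\beta,\beta)_{\pm}$, $D(\alpha,\alpha)_{\pm}$, where the induction functor $\uparrow_{\HH_q(\BS_{(m,m)})}^{A(m)}$ is exact and the relevant multiplicities become a direct bookkeeping exercise. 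I would present whichever of these makes the ``no extra constituents'' point most transparent.
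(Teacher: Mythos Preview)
Your proposal is correct and follows exactly the paper's approach: (2.7.1)--(2.7.2) come from twisting by $\tau$ via (2.3.4), and (2.7.3) comes from the direct sum decompositions together with the Morita equivalence (2.6) for $\HH_q(B_n)$. The ``no extra constituents'' step you flag as the main obstacle is precisely what the paper leaves implicit; your justification via Lemma~2.1 (that the restrictions of the other simple $\HH_q(B_n)$-constituents are either irreducible of a different type or split into $D^{\pm}_{(\alpha',\alpha')}$ with $\alpha'\neq\alpha$) is the intended one, and no Frobenius-reciprocity detour is needed.
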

\begin{proof} The first two equalities follow from (2.3.4), while the last equality follows from (\ref{MoritaB}) and
(2.3.4) and the fact that
$$S_{(\beta,\beta)}\!\downarrow_{\HH_q(D_n)}\cong S^{+}_{(\beta,\beta)}\oplus S^{-}_{(\beta,\beta)},\quad
D_{(\alpha,\alpha)}\!\downarrow_{\HH_q(D_n)}\cong
D^{+}_{(\alpha,\alpha)}\oplus D^{-}_{(\alpha,\alpha)}.$$
\end{proof}

Therefore, it suffices to determine the decomposition number
$\bigl[S^{+}_{(\beta,\beta)}:D^{+}_{(\alpha,\alpha)}\bigr]$ for each
partition $\beta$ of $m$ and each $e$-restricted partition $\alpha$
of $m$. Note that these decomposition numbers are the $2$-splittable
decomposition numbers in the sense of \cite{HMa}.
\smallskip

The purpose of this article is to give an explicit formula for these
decomposition numbers. We shall relate these decomposition numbers
with the decomposition numbers of the Hecke algebra $\HH_q(\BS_{m})$
and certain Schur elements of $\HH_q(B_n)$ and of $\HH_q(\BS_{m})$. The
main results of this paper are the following two theorems:

\begin{theorem} \label{decnum0} Let $\beta$ be a partition of $m$ and $\alpha$ an $e$-restricted partition of $m$. Let $$
d_{\beta,\alpha}:=\bigl[S_{\beta}:D_{\alpha}\bigr].
$$
Then we have the following equality in $K$: $$
\bigl[S^{+}_{(\beta,\beta)}:D^{+}_{(\alpha,\alpha)}\bigr]=d_{\beta,\alpha}
\biggl(\frac{\sqrt{f_{(\beta,\beta)}(q)}}{\sqrt{f_{(\alpha,\alpha)}(q)}}
+d_{\beta,\alpha}\biggr)/2.
$$
In particular, if $\cha K=0$, then the above equality completely determines the
decomposition number.
\end{theorem}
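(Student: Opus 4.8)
The plan is to reduce, via Lemma~\ref{decnumeven1}, to a single ``signed'' multiplicity and then to compute it by a twining-trace argument. Write $c^{+}_{\alpha}:=\bigl[S^{+}_{(\beta,\beta)}:D^{+}_{(\alpha,\alpha)}\bigr]$ and $c^{-}_{\alpha}:=\bigl[S^{+}_{(\beta,\beta)}:D^{-}_{(\alpha,\alpha)}\bigr]$. Combining (2.7.1)--(2.7.3) gives $c^{+}_{\alpha}+c^{-}_{\alpha}=d_{\beta,\alpha}^{2}$, so it is enough to prove the identity $c^{+}_{\alpha}-c^{-}_{\alpha}=d_{\beta,\alpha}\sqrt{f_{(\beta,\beta)}(q)}/\sqrt{f_{(\alpha,\alpha)}(q)}$ in $K$ (here $\sqrt{f_{(\alpha,\alpha)}(q)}$ is invertible by (2.3.1)); then $c^{+}_{\alpha}=\tfrac12\bigl(d_{\beta,\alpha}^{2}+d_{\beta,\alpha}\sqrt{f_{(\beta,\beta)}(q)}/\sqrt{f_{(\alpha,\alpha)}(q)}\bigr)$, which is the asserted formula. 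When $\cha K=0$ the quantity $c^{+}_{\alpha}-c^{-}_{\alpha}$ is an ordinary integer, being a difference of composition multiplicities, so the identity then pins down $c^{+}_{\alpha}$ outright; for $\cha K$ odd one retains only the equality in $K$, owing to the division by $2$ and to possible congruences.

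To obtain that identity I would transport everything through the Morita equivalence (\ref{MoritaEquivalence}) and, following Section~3, through its $q$-Schur algebra cover $\widetilde A(m)$, so that $S^{\pm}_{(\beta,\beta)}$, $D^{\pm}_{(\alpha,\alpha)}$ become $S(\beta,\beta)_{\pm}$, $D(\alpha,\alpha)_{\pm}$ (and their Weyl-module lifts), and work with the invertible element $h(m)$ (resp.\ its lift $\widetilde h$), which conjugates $\HH_q(\BS_{(m,m)})$ by the swap automorphism $\sigma$. The structural input, which is the heart of the matter, is that $h(m)$ acts on the cyclic generator of $S(\beta,\beta)_{\pm}$ as the scalar $\pm\sqrt{f_{(\beta,\beta)}(q)}$ and on that of $D(\alpha,\alpha)_{\pm}$ as $\pm\sqrt{f_{(\alpha,\alpha)}(q)}$; this is read off from the very definitions of these modules together with the identification, carried out in Section~4, of the Laurent polynomials $f_{\ulam}(v)$ with the relevant quotients of Schur elements of $\HH_q(B_n)$ and $\HH_q(\BS_m)$. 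Consequently, on each of $S(\beta,\beta)_{\pm}$ and $D(\alpha,\alpha)_{\pm}$ the operator $h(m)$ is the corresponding scalar times a $\sigma$-twisted involution, which under the identification of the $\HH_q(\BS_{(m,m)})$-module structure with $S_{\beta}\otimes\widehat S_{\beta}$, resp.\ $D_{\alpha}\otimes\widehat D_{\alpha}$, is simply the flip of the two tensor factors (both it and $h(m)/(\pm\sqrt{f})$ are $\sigma$-twisted self-maps fixing the cyclic generator, hence coincide); whereas on every induced module $D(\umu)$ with $\mu^{(1)}\neq\mu^{(2)}$ the element $h(m)$ interchanges the two $\HH_q(\BS_{(m,m)})$-isotypic halves, so $h(m)\xi$ has trace $0$ for all $\xi\in\HH_q(\BS_{(m,m)})$.

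Now apply the linear functional $V\mapsto\Tr(h(m)\xi\mid V)$, $\xi\in\HH_q(\BS_{(m,m)})$, to the Grothendieck-group identity expressing $[S(\beta,\beta)_{+}]$ as a sum of the $[D(\umu)]$ with $\mu^{(1)}\neq\mu^{(2)}$ and of $c^{+}_{\alpha}[D(\alpha,\alpha)_{+}]+c^{-}_{\alpha}[D(\alpha,\alpha)_{-}]$. The $D(\umu)$-terms drop out by the trace-$0$ observation; the $D(\alpha,\alpha)_{\pm}$-terms contribute $\pm\sqrt{f_{(\alpha,\alpha)}(q)}\,\theta_{\alpha}(\xi)$, where $\theta_{\alpha}$ is the twining Brauer character of $D_{\alpha}\otimes\widehat D_{\alpha}$ (the trace of the flip composed with the module action); and the left-hand side contributes $\sqrt{f_{(\beta,\beta)}(q)}$ times the twining character of $S_{\beta}\otimes\widehat S_{\beta}$. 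The twining character formula for Weyl modules over a tensor product of two $q$-Schur algebras, proved in Section~5, identifies this last twining character with the ordinary character of the Weyl module $\Delta(\beta)$ of the single $q$-Schur algebra of $\HH_q(\BS_m)$, pulled back along the multiplication map $\HH_q(\BS_{(m,m)})\to\HH_q(\BS_m)$; this is linear in $[\Delta(\beta)]=\sum_{\alpha}d_{\beta,\alpha}[L(\alpha)]$, hence equals $\sum_{\alpha}d_{\beta,\alpha}\theta_{\alpha}$. We therefore get $\sqrt{f_{(\beta,\beta)}(q)}\sum_{\alpha}d_{\beta,\alpha}\theta_{\alpha}=\sum_{\alpha}(c^{+}_{\alpha}-c^{-}_{\alpha})\sqrt{f_{(\alpha,\alpha)}(q)}\,\theta_{\alpha}$ as functionals on $\HH_q(\BS_{(m,m)})$. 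Since the $\theta_{\alpha}$ are linearly independent --- restrict $\xi$ to $\HH_q(\BS_m)\otimes 1$ and use that the Brauer characters of the $D_{\alpha}$ are linearly independent over the split algebra $\HH_q(\BS_m)$ --- comparing coefficients yields $c^{+}_{\alpha}-c^{-}_{\alpha}=d_{\beta,\alpha}\sqrt{f_{(\beta,\beta)}(q)}/\sqrt{f_{(\alpha,\alpha)}(q)}$, and the theorem follows.

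The step I expect to be the real obstacle is the middle paragraph: proving rigorously that $h(m)$ (equivalently $\widetilde h$) acts on the Weyl and simple modules attached to $(\beta,\beta)$ and $(\alpha,\alpha)$ as precisely $\pm\sqrt{f_{\bullet}(q)}$ times a twisted involution, and identifying that square root with the correct ratio of Schur elements. This is exactly what forces the detour through $q$-Schur algebras --- one needs Weyl modules, $\Delta$-filtrations and a rigid enough highest weight structure to make the twining character computation go through --- and it is achieved by the explicit evaluation of $f_{\ulam}(v)$ in Section~4 together with the twining character formula of Section~5. The remaining ingredients --- the reduction via Lemma~\ref{decnumeven1}, the vanishing of the $D(\umu)$-traces, and the linear-independence bookkeeping --- are formal.
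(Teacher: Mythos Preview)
Your proposal is correct and follows essentially the same strategy as the paper's proof in Section~5: reduce to computing $c^{+}_{\alpha}-c^{-}_{\alpha}$, observe that the lifted element $\theta=H(m)$ (your $\widetilde h$) acts on $\widetilde{\Delta}^{\pm}_{\lam,\lam}$ and $\widetilde{L}^{\pm}_{\lam,\lam}$ as $\pm\sqrt{f_{(\lam,\lam)}(q)}$ times the flip (Corollary~\ref{cor317}), note that the twining trace kills the off-diagonal simples $\widetilde{L}_{\lam,\mu}$ with $\lam\neq\mu$, expand $[\widetilde{\Delta}^{+}_{\beta,\beta}]$ in the Grothendieck group, and compare coefficients using linear independence. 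The only differences are packaging. The paper records the traces of $\theta$ on the diagonal weight spaces $M_{(\nu,\nu)}$ as a formal twining character $\chf^{\theta}(M)\in K[\Lambda(m)]$ and then invokes the triangular shape of $\pi(\chf L_{\mu})$ (each $L_\mu$ being a highest weight module for $U_K(\mathfrak{gl}_m)$) to get $K$-linear independence; you instead trace $h(m)\xi$ for varying $\xi\in\HH_q(\BS_{(m,m)})$ and invoke linear independence of Brauer characters of the split algebra $\HH_q(\BS_m)$. Both work; the paper's formal-character version is a bit cleaner precisely because the weight-space idempotents live only in $S_q^{m,m}$, which is its real reason for passing to the $q$-Schur cover.

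Two small corrections to your write-up. First, the identification of $f_{\ulam}(v)$ with a ratio of Schur elements (Section~4, Theorem~\ref{mainthm}) is \emph{not} used in the proof of Theorem~\ref{decnum0}; the only input is Lemma~3.11, namely $(Z_\lam\otimes\widehat{Z}_\mu)Z_{m,m}=f_{(\lam,\mu)}(q)(Z_\lam\otimes\widehat{Z}_\mu)$, which is immediate from the definition of $f_{\ulam}$ in \cite[Lemma~3.2]{Hu1}. Second, there is no algebra ``multiplication map $\HH_q(\BS_{(m,m)})\to\HH_q(\BS_m)$''; what you actually use (and what Lemma~\ref{lm51} proves) is that the trace of the flip composed with $a\otimes\widehat{b}$ on $M\otimes\widehat{M}$ equals $\Tr_M(ab)$, so the twining character of $S_\beta\otimes\widehat{S}_\beta$ is the ordinary character of $S_\beta$ and your linear-independence step goes through after restricting to $\xi=a\otimes 1$.
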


Note that {\it a priori}\, we do not even know why the righthand
side term in the above equality should be an element in the prime
subfield of $K$. Note also that we allow $q=1$ in the above theorem.
The Laurent polynomials $f_{(\beta,\beta)}(v),
f_{(\alpha,\alpha)}(v)$ appeared in the above theorem can be
computed explicitly by the next theorem.

\begin{theorem} \label{mainthm} Let $\ulam=(\lam^{(1)},\lam^{(2)})$ be an arbitrary bipartition of $n$. Then we have $$
f_{\ulam}(v)=v^{\frac{n(n-1)}{2}}\frac{s_{\ulam}(v,1)}{s_{\lam^{(1)}}(v)s_{\lam^{(2)}}(v)},
$$
where $s_{\ulam}(v,\tilde{v})$ (resp., $s_{\lam^{(1)}}(v)$, $s_{\lam^{(2)}}(v)$) is the Schur element corresponding to
$\ulam$ (resp., corresponding to $\lam^{(1)}$, $\lam^{(2)}$), and $\tilde{v}$ is another indeterminate over $\Z$.
\end{theorem}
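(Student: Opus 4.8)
\textbf{Proof proposal for Theorem \ref{mainthm}.}

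The plan is to recall from \cite[Lemma 3.2]{Hu1} the precise definition of $f_{\ulam}(v)$ and to recognize it as a ratio of norms of quasi-idempotents, then to identify those norms with Schur elements. Concretely, $f_{\ulam}(v)$ arises as the scalar by which a certain element of $\HH_q(B_n)$ (built from the generator $z_{\ulam}$ of the dual Specht module $S_{\ulam}$, together with the element implementing the automorphism $\tau$) acts; equivalently it is the constant relating $z_{\ulam}\tau(z_{\ulam})$ to $z_{\ulam}$ inside the Specht module, measured against the corresponding constants for the two symmetric-group pieces $\HH_q(\BS_{|\lam^{(1)}|})$ and $\HH_q(\BS_{|\lam^{(2)}|})$. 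First I would set up the generic situation: work over $\mathcal A=\Z[v,v^{-1}]$ (or its field of fractions) where $\HH_v(B_n)$ is semisimple, so that the contravariant bilinear form on each (dual) Specht module $S_{\ulam}$ is nondegenerate and its Gram determinant / normalizing scalar is literally the Schur element $s_{\ulam}(v,\tilde v)$ specialized at $\tilde v=1$ (the second parameter $Q$ of $\HH_q(B_n)$ being $1$ in our setting). The point is that $f_{\ulam}(v)$ was defined in \cite{Hu1} exactly as such a normalizing scalar, so the identity to prove is an identity of Laurent polynomials that it suffices to check after inverting finitely many elements of $\mathcal A$.

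The key steps, in order: (1) Unwind the definition of $f_{\ulam}(v)$ from \cite[Lemma 3.2]{Hu1}, writing it as $\gamma_{\ulam}(v)/(\gamma_{\lam^{(1)}}(v)\gamma_{\lam^{(2)}}(v))$ up to an explicit power of $v$, where $\gamma_\bullet$ denotes the relevant self-pairing constant of the standard generator of the dual Specht module. (2) Identify $\gamma_{\ulam}(v)$ with $v^{a(\ulam)}s_{\ulam}(v,1)$ and $\gamma_{\lam^{(i)}}(v)$ with $v^{a(\lam^{(i)})}s_{\lam^{(i)}}(v)$, using the standard fact (Dipper--James--Mathas, \cite{DJM}, and for type $A$ \cite{DJ2}, \cite{Mu}) that for a split semisimple cellular algebra the Schur element of an irreducible module equals the scalar by which the central primitive idempotent's ``diagonal'' cellular basis element acts, i.e.\ the reciprocal of the normalized Gram scalar. (3) Track the powers of $v$: the Specht module generator $z'_{\ulam}=y_{\ulam'}T_{w_{\ulam'}}x_{\ulam}$ carries a shift by $\ell(w_{\ulam'})$ and by $|\BS_{\ulam}|$-type length sums, and assembling these over the $B_n$ case versus the two $A$-type pieces produces exactly the global shift $v^{n(n-1)/2}$ (which is $v^{\ell(w_0)}$ for $\BS_n$, reflecting that the two row/column normalizations $x$ and $y$ together span the longest element). (4) Conclude that the stated rational function lies in $\mathcal A$ (it must, since $f_{\ulam}(v)\in\mathcal A$ by \cite[Lemma 3.2]{Hu1}) and agrees with $f_{\ulam}(v)$ after specialization, hence everywhere.

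The main obstacle I expect is step (3): getting the exponent of $v$ exactly right. The individual normalizing constants $\gamma_\bullet$ are only well-defined up to the choice of generator of the (dual) Specht module, and different sources (\cite{DJ2}, \cite{Mu}, \cite{Hu1}) use $x_\lam$ versus $y_{\lam'}$ versus the $\#$-twisted versions $x_{\lam'}^{\#}T_{w_{\lam'}}^{\#}y_\lam^{\#}$, each of which changes the answer by a monomial in $v$; reconciling the convention in \cite[Lemma 3.2]{Hu1} with the convention under which Schur elements are tabulated requires care. A secondary subtlety is that the ``$\tilde v$'' appearing in $s_{\ulam}(v,\tilde v)$ is the $B_n$ unequal-parameter Schur element, and one must check that setting $\tilde v=1$ is consistent with the parameter set $\{q,1\}$ used throughout (equation \eqref{separa} and the embedding $\iota$), rather than, say, $\{q,-1\}$; this is where the hypothesis $\cha K\neq 2$ and the separation condition quietly enter, guaranteeing that $s_{\ulam}(v,1)$ does not vanish identically and that the division by $s_{\lam^{(1)}}(v)s_{\lam^{(2)}}(v)$ is legitimate in the generic ring. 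Once the bookkeeping of exponents and parameter specializations is pinned down, the equality is forced by nondegeneracy of the cellular form in the semisimple generic case.
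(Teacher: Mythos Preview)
Your proposal is correct and follows essentially the same line as the paper: interpret $f_{\ulam}$ as the normalizing constant of a quasi-idempotent generating the type $B$ Specht module, identify that constant with the type $B$ Schur element, and divide out the two type $A$ Schur elements coming from the parabolic pieces. The paper's execution differs from yours only in packaging: rather than invoking cellular Gram scalars abstractly, it writes down an explicit element $A(v,\tilde v)\,u_{n-a}^{-}h_{n-a,a}u_{a}^{+}z_{\ulam}h_{a,n-a}T_{w_{\widehat{\ulam'}}}$, verifies by direct calculation (using $z_{a,n-a}z_{\ulam}=f_{\ulam}z_{\ulam}$ together with the K\"unzer--Mathas identity $z_{\mu}T_{w_{\mu'}}z_{\mu}=v^{\ell(w_\mu)}s_\mu(v)z_\mu$ for the type $A$ pieces) that it is idempotent with normalization $A^{-1}=v^{\ell(w_{\lam^{(1)}})+\ell(w_{\lam^{(2)}})}f_{\ulam}\,s_{\lam^{(1)}}s_{\lam^{(2)}}$, and then reads off $s_{\ulam}$ as the reciprocal of its trace via \cite[(1.6),(5.9)]{Ma1}. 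This makes your step (3) completely mechanical: the exponent $n(n-1)/2$ drops out of Mathas's trace formula for $z_\mu T_{w_{\mu'}}$ in type $B$, not from assembling length contributions by hand. The paper also carries the computation out for the genuine two-parameter algebra $\HH_{v,\tilde v}(B_n)$ and only specializes $\tilde v=1$ at the end, which sidesteps your worry about whether $s_{\ulam}(v,1)$ might vanish: the identity is proved in $\widetilde{\mathcal A}=\Z[v^{\pm1},\tilde v^{\pm1}]$ first.
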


Note that these Schur elements are some explicit defined Laurent polynomials on $v, \tilde{v}$. For example, if $\lam$
is a partition, then $$
s_{\lam}(v)=v^{-\ell(w_{\lam',0})}\prod_{(i,j)\in[\lam]}[h_{i,j}^{\lam}]_v,
$$
where $h_{i,j}^{\lam}=\lam_i+\lam'_j-i-j+1$ (the $(i,j)$th hook
length), $w_{\lam',0}$ is the unique longest element in $\BS_{\lam'}$, $\lam'$ denotes the conjugate partition of
$\lam$, and for each integer $k$,
$$[k]_v:=\frac{v^k-1}{v-1}\in\mathcal{A}. $$
We refer the reader to \cite{Ma1} for the explicit definitions of Schur elements corresponding to arbitrary
multi-partitions.

\begin{example} Suppose that $K=\mathbb{C}, n=6$, $\beta=(2,1), \alpha=(1,1,1)$. If $e=3$, then $\alpha$ is $e$-restricted and
the assumption (\ref{separa}) is satisfied. In this case, let $q$ be a primitive $3$th root of unity in
$\mathbb{C}$. Then $1+q+q^2=0$. It is well-known that $[S_{\beta}:D_{\alpha}]=1$. Applying Theorem \ref{mainthm} and the known formulae for Schur elements, we get that $$\begin{aligned}
f_{(\beta,\beta)}(v)&=v^{4}(v+1)^{4}(v^3+1)^2,\\
f_{(\alpha,\alpha)}(v)&=(v+1)^2(v^2+1)^2(v^3+1)^2.
\end{aligned}
$$
Applying Theorem \ref{decnum0}, we get that $$
\bigl[S^{+}_{(\beta,\beta)}:D^{+}_{(\alpha,\alpha)}\bigr]=\frac{1}{2}\biggl(\frac{q^2(q+1)^2(q^3+1)}{(q+1)(q^2+1)(q^3+1)}+
1\biggr)=1;
$$
If $e=5$, then $\alpha$ is also $e$-restricted and the assumption
(\ref{separa}) is still satisfied. In that case,
$[S_{\beta}:D_{\alpha}]=0$, and hence $$
\bigl[S^{+}_{(\beta,\beta)}:D^{+}_{(\alpha,\alpha)}\bigr]=0.
$$
\end{example}

\bigskip\bigskip
\section{Lifting to $q$-Schur algebras}

For each integer $i\in\{1,2,\cdots,n-1\}\setminus\{m\}$, we define
$$
\widehat{i}=\begin{cases} {i+m} &\text{{\rm if}\,\,\, $1\leq i<m$,}\\
{i-m} &\text{{\rm if}\,\,\, $m<i\leq n-1$.}\end{cases} $$
Let \,\,\,$\widehat{\null}$\,\,\,\, be the group automorphism of
$\BS_{(m,m)}:=\BS_m\times\BS_{\{m+1,\cdots,n\}}$ which is defined on
generators by $\widehat{s_i}=s_{\widehat{i}}$ for each integer $i\in\{1,2,\cdots,n-1\}\setminus\{m\}$. We use
the same notation \,\,\,$\widehat{\null}$\,\,\, to denote the algebra
automorphism of the Hecke algebra
$\HH_q(\BS_{(m,m)})=\HH_q(\BS_m)\otimes\HH_q(\BS_{\{m+1,\cdots,n\}})$
which is defined on generators by $\widehat{T_i}=T_{\widehat{i}}$
for each integer $i\in\{1,2,\cdots,n-1\}\setminus\{m\}$.\smallskip

For each partition $\lam$ of $n$, we set $$
x_{\lam}:=\sum_{w\in\BS_{\lam}}T_w,
$$
where $\BS_{\lam}$ is the Young subgroup of $\BS_n$ corresponding to
$\lam$. We use $\Lambda(n)$ to denote the set of compositions
$\lam=(\lam_1,\lam_2,\cdots,\lam_n)$ of $n$ into $n$ parts (each
part $\lam_i$ being nonnegative). Let $\Lambda^{+}(n)$ be the set of partitions in $\Lambda(n)$. Following Dipper and
James
\cite{DJ4}, \cite{DJ5}, we define the $q$-Schur algebra $S_q(n)$ to be $$
S_q(n):=\End_{\HH_q(\BS_n)}\Bigl(\bigoplus_{\lam\in\Lambda(n)}x_{\lam}\HH_q(\BS_n)\Bigr).
$$
In a similar way, we define the $q$-Schur algebras $S_q(m),
\widehat{S}_q(m)$ by using the Hecke algebras $\HH_q(\BS_m),
\HH_q(\BS_{\{m+1,\cdots,n\}})$  respectively. Note that to define the
$q$-Schur algebra $\widehat{S}_q(m)$, one needs to use the element
$\widehat{x}_{\lam}$ for $\lam\in\Lambda(m)$.\smallskip

For any $\lam,\mu\in\Lambda(n)$, let $\DD_{\lam,\mu}$ be the set
of distinguished $\BS_{\lam}$-$\BS_{\mu}$-double coset
representatives in $\BS_n$. Following \cite{DJ5}, for each
$d\in\DD_{\lam,\mu}$, we define
$$\phi_{\lam,\mu}^{d}\in\Hom_{\HH_q(\BS_n)}\bigl(x_{\mu}\HH_q(\BS_n),x_{\lam}\HH_q(\BS_n)\bigr)$$
by: $$ \phi_{\lam,\mu}^{d}\bigl(x_{\mu}h\bigr)=\sum_{w\in
\BS_{\lam}d\BS_{\mu}}T_w h,\quad\forall\,h\in\HH_q(\BS_n).
$$
By \cite[Theorem 1.4]{DJ5}, the elements in the set $$
\bigl\{\phi_{\lam,\mu}^{d}\bigm|\lam,\mu\in\Lambda(n),
d\in\DD_{\lam,\mu}\bigr\}
$$
form a $K$-basis of $S_q(n)$ which shall be called {\it standard
bases} in this paper.\smallskip

For any $\lam,\mu\in\Lambda(n)$, by \cite[(2.3)]{DJ5}, $$
\phi_{\lam,\lam}^{1}S_q(n)\phi_{\mu,\mu}^{1}\cong\Hom_{\HH_q(\BS_n)}\bigl(x_{\mu}\HH_q(\BS_n),x_{\lam}\HH_q(\BS_n)\bigr).
$$
Let $\omega_n$ denote the partition
$(1^n)=(\underbrace{1,1,\cdots,1}_{\text{$n$ copies}})$ of $n$.
Using the natural isomorphism $$
\Hom_{\HH_q(\BS_n)}\bigl(\HH_q(\BS_n),\HH_q(\BS_n)\bigr)\cong
\HH_q(\BS_n),
$$
we can identify $\HH_q(\BS_n)$ with the non-unital $K$-subalgebra
$\phi_{\omega_n,\omega_n}^{1}S_q(n)\phi_{\omega_n,\omega_n}^{1}$ of
$S_q(n)$. We use $\iota_n$ to denote the resulting injection
from $\HH_q(\BS_n)$ into $S_q(n)$. In a similar way, we can identify
$\HH_q(\BS_m)$ with the non-unital $K$-subalgebra
$\phi_{\omega_m,\omega_m}^{1}S_q(m)\phi_{\omega_m,\omega_m}^{1}$
via an injective map $\iota_m$, and $\HH_q(\BS_{\{m+1,\cdots,n\}})$
with the non-unital $K$-subalgebra $\widehat{\phi}_{\omega_m,\omega_m}^{1}
\widehat{S}_q(m)\widehat{\phi}_{\omega_m,\omega_m}^{1}$ via an injective map
$\widehat{\iota}_m$. Note that here in order
not to confuse with the standard basis elements of $S_q(m)$, we denote the
standard basis element of $\widehat{S}_q(m)$ by
$\widehat{\phi}_{\lam,\mu}^{d}$.
\smallskip

Let $\rho$ denote the natural injective map from
$\HH_q(\BS_m)\otimes\HH_q(\BS_{\{m+1,\cdots,n\}})$ into
$\HH_q(\BS_n)$. We are going to lift this map to an injection
$\widetilde{\rho}$ from $S_q(m)\otimes\widehat{S}_q(m)$ into
$S_q(n)$. Let $\DD_{(m,m)}$ be the set of distinguished right coset
representatives of $\BS_{(m,m)}$ in $\BS_n$. Any element
$h\in\HH_q(\BS_n)$ can be written uniquely as
\begin{equation}\label{anyH} h=\sum_{\substack{w_1\in\BS_m,
w_2\in\BS_{\{m+1,\cdots,n\}}\\
d\in\DD_{(m,m)}}}A_{w_1,w_2}^{d}T_{w_1}T_{w_2}T_d,
\end{equation}
where $A_{w_1,w_2}^{d}\in K$ for each $w_1, w_2, d$. For any $f\in S_q(m), g\in\widehat{S}_q(m)$, we define
$\widetilde{\rho}(f\otimes g)\in S_q(n)$ as follows: for any $\mu\in\Lambda(n)$, and any $h\in\HH_q(\BS_n)$ which is
given by (\ref{anyH}), if $\mu=(\mu^{(1)},\mu^{(2)})$, where $\mu^{(1)}\in\Lambda(m)$, then we set $$
\widetilde{\rho}(f\otimes g)\bigl(x_{\mu}h\bigr):=\sum_{\substack{w_1\in\BS_m, w_2\in\BS_{\{m+1,\cdots,n\}}\\
d\in\DD_{(m,m)}}}A_{w_1,w_2}^{d}f\bigl(x_{\mu^{(1)}}T_{w_1}\bigr)g\bigl(x_{\widehat{\mu}^{(2)}}T_{w_2}\bigr)T_d;
$$
otherwise, we set $\widetilde{\rho}(f\otimes g)\bigl(x_{\mu}h\bigr):=0$.

\addtocounter{theorem}{1}

\begin{lemma} With the notations as above, we have that $$
\widetilde{\rho}(f\otimes g)\in S_q(n).
$$
\end{lemma}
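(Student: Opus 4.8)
The plan is to verify directly that the linear map $\widetilde{\rho}(f\otimes g)$, defined on the generators $x_\mu\HH_q(\BS_n)$ of $\bigoplus_{\mu\in\Lambda(n)}x_\mu\HH_q(\BS_n)$, is a well-defined $\HH_q(\BS_n)$-module endomorphism. By the very definition of $S_q(n)$ as $\End_{\HH_q(\BS_n)}\bigl(\bigoplus_{\lam}x_\lam\HH_q(\BS_n)\bigr)$, this is exactly what is needed. So there are two things to check: first, that the prescribed values $\widetilde{\rho}(f\otimes g)(x_\mu h)$ are consistent — i.e. that the formula depends only on the element $x_\mu h$ of $x_\mu\HH_q(\BS_n)$ and not on the chosen representation (\ref{anyH}) of $h$; second, that the resulting map commutes with right multiplication by arbitrary elements of $\HH_q(\BS_n)$.

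For the first point I would argue as follows. Fix $\mu=(\mu^{(1)},\mu^{(2)})$ with $\mu^{(1)}\in\Lambda(m)$. The key observation is that $x_\mu = x_{\mu^{(1)}}\widehat{x}_{\mu^{(2)}}$ as elements of $\HH_q(\BS_n)$ (where $\widehat{x}_{\mu^{(2)}}$ involves the generators $T_{m+1},\dots,T_{n-1}$), since $\BS_\mu = \BS_{\mu^{(1)}}\times\widehat{\BS}_{\mu^{(2)}}$ splits as a direct product inside $\BS_{(m,m)}$. Now if $h$ has two expansions of the form (\ref{anyH}), their difference lies in the left annihilator of nothing in particular, but what matters is $x_\mu h$: one shows that $x_{\mu^{(1)}}T_{w_1}$ (resp. $\widehat{x}_{\mu^{(2)}}T_{w_2}$) as $w_1$ ranges over $\BS_m$ (resp. $w_2$ over $\BS_{\{m+1,\dots,n\}}$) and $d$ over $\DD_{(m,m)}$, the elements $x_\mu T_{w_1}T_{w_2}T_d = (x_{\mu^{(1)}}T_{w_1})(\widehat{x}_{\mu^{(2)}}T_{w_2})T_d$ are linearly independent whenever the $T_{w_1}T_{w_2}T_d$ are part of a basis adapted to (\ref{anyH}) — more precisely, one uses that $x_{\mu^{(1)}}\HH_q(\BS_m)$ has the $x_{\mu^{(1)}}T_{w_1}$ (for $w_1$ ranging over distinguished coset representatives) spanning it and that $f$, being $\HH_q(\BS_m)$-linear, is already determined on $x_{\mu^{(1)}}\HH_q(\BS_m)$. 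So the cleaner route: reduce to checking well-definedness on each summand by noting $f$ and $g$ are themselves module homomorphisms, hence $f(x_{\mu^{(1)}}T_{w_1})$ depends only on $x_{\mu^{(1)}}T_{w_1}\in x_{\mu^{(1)}}\HH_q(\BS_m)$, and likewise for $g$; then the only ambiguity in (\ref{anyH}) is the choice of reduced decompositions $w=w_1w_2d$, which is unique by the standard theory of distinguished double coset representatives. Thus $\widetilde{\rho}(f\otimes g)(x_\mu h)$ is intrinsically defined.

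For the second point — $\HH_q(\BS_n)$-linearity — I would check that $\widetilde{\rho}(f\otimes g)(x_\mu h h') = \bigl(\widetilde{\rho}(f\otimes g)(x_\mu h)\bigr)h'$ for $h'\in\HH_q(\BS_n)$. Since both sides are $K$-linear in $h'$, it suffices to take $h' = T_i$ a generator. The verification splits into the cases $1\le i\le m-1$, $i=m$, and $m+1\le i\le n-1$. For $i\ne m$, multiplication by $T_i$ on the right acts "within" either the $\BS_m$-part, the $\BS_{\{m+1,\dots,n\}}$-part, or the $\DD_{(m,m)}$-part of the factorization $T_{w_1}T_{w_2}T_d$, and one tracks how the expansion (\ref{anyH}) of $hT_i$ relates to that of $h$, using that $f$ and $g$ are right $\HH_q(\BS_m)$- and $\HH_q(\BS_{\{m+1,\dots,n\}})$-linear respectively. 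The case $i=m$ is slightly different: $T_d T_m$ for $d\in\DD_{(m,m)}$ either stays in $\DD_{(m,m)}\cdot\{1,T_m\}$-shaped pieces or wraps around, and one uses the coset combinatorics of $\BS_{(m,m)}\backslash\BS_n$ (the standard braid/quadratic relations for $T_m$) to re-expand in the form (\ref{anyH}); then one compares with the $T_d$-part of $\widetilde{\rho}(f\otimes g)(x_\mu h)$ multiplied by $T_m$. This is the main obstacle — it is essentially a bookkeeping argument, and the $i=m$ case carries all the content, since that is where the two Hecke-algebra tensor factors "interact" through the double-coset representatives. Once all three cases are verified, $\widetilde{\rho}(f\otimes g)$ is an $\HH_q(\BS_n)$-endomorphism, hence lies in $S_q(n)$, completing the proof.
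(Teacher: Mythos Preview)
Your plan is sound in spirit but takes a considerably longer road than the paper does. The paper's proof is a single observation: for $\mu=(\mu^{(1)},\mu^{(2)})$ with $\mu^{(1)}\in\Lambda(m)$ there is a natural isomorphism of right $\HH_q(\BS_n)$-modules
\[
\bigl(x_{\mu^{(1)}}\HH_q(\BS_m)\otimes \widehat{x}_{\mu^{(2)}}\HH_q(\BS_{\{m+1,\dots,n\}})\bigr)\!\uparrow_{\HH_q(\BS_{(m,m)})}^{\HH_q(\BS_n)}\;\cong\; x_{\mu}\HH_q(\BS_n),
\]
given by $m\otimes T_d\mapsto mT_d$ for $d\in\DD_{(m,m)}$. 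Under this identification, your formula for $\widetilde{\rho}(f\otimes g)$ on the summand $x_\mu\HH_q(\BS_n)$ is nothing other than the image of the $\HH_q(\BS_{(m,m)})$-map $f\otimes g$ under the induction functor. Both well-definedness and right $\HH_q(\BS_n)$-linearity then come for free from functoriality of induction; there is no need to check anything on generators.

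A couple of remarks on your direct approach. In the well-definedness step, the expansion (\ref{anyH}) of $h$ itself is already unique, so the genuine issue is not ``the choice of reduced decompositions $w=w_1w_2d$'' but rather that distinct $h$ may give the same $x_\mu h$; what you really need (and essentially say in your ``cleaner route'') is that the formula factors through $x_\mu h$ because $f\otimes g$ is well-defined on $x_{\mu^{(1)}}\HH_q(\BS_m)\otimes\widehat{x}_{\mu^{(2)}}\HH_q(\BS_{\{m+1,\dots,n\}})$ and the decomposition $\HH_q(\BS_n)=\bigoplus_d\HH_q(\BS_{(m,m)})T_d$ lets you read off the $d$-components of $x_\mu h$. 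For the right-linearity step, your trichotomy by the value of $i$ is not the natural one: for any $i$ and any $d\in\DD_{(m,m)}$, either $ds_i\in\DD_{(m,m)}$ (giving $T_dT_i=T_{ds_i}$ or the quadratic relation) or $ds_i=s_jd$ for some $j\ne m$ (giving $T_dT_i=T_jT_d$), and this can happen for $i\ne m$ as well as $i=m$. The bookkeeping can be carried through in either case using the $\HH_q(\BS_{(m,m)})$-linearity of $f\otimes g$, but once you organise it that way you are effectively reproving that induction is a functor --- which is precisely the shortcut the paper takes.
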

\begin{proof} This is clear by using the fact that $$
\bigl(x_{\mu^{(1)}}\HH_q(\BS_m)\otimes \widehat{x}_{\mu^{(2)}}
\HH_q(\BS_{\{m+1,\cdots,n\}})\bigr)\!\uparrow\!_{\HH_q(\BS_{(m,m)})}^{\HH_q(\BS_n)}\cong x_{\mu}\HH_q(\BS_n).
$$
\end{proof}

\begin{lemma} We have the following commutative diagram of algebra maps: $$
\begin{CD}
\HH_q(\BS_m)\otimes\HH_q(\BS_{\{m+1,\cdots,n\}})
@>{\rho}>>\HH_q(\BS_n)\\
@V{\iota_m\otimes\widehat{\iota}_m} VV @V{\iota_n}VV\\
S_q(m)\otimes \widehat{S}_q(m)@>{\widetilde{\rho}}>>S_q(n)
\end{CD}
$$
Moreover, $\widetilde{\rho}(\phi_{\omega_m,\omega_m}^{1}\otimes
\widehat{\phi}_{\omega_m,\omega_m}^{1})=\phi_{\omega_n,\omega_n}^{1}$
and $\widetilde{\rho}$ is an injection.\end{lemma}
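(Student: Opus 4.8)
The plan is to verify the three assertions in turn, starting from the explicit formula that defines $\widetilde{\rho}(f\otimes g)$ on each $x_\mu\HH_q(\BS_n)$. For the commutativity of the diagram, I would take an arbitrary element $h_1\otimes h_2\in\HH_q(\BS_m)\otimes\HH_q(\BS_{\{m+1,\cdots,n\}})$ and chase it both ways. Going right then down gives $\iota_n(\rho(h_1\otimes h_2))=\iota_n(h_1h_2)$, which is by definition the endomorphism $\phi^{1}_{\omega_n,\omega_n}S_q(n)\phi^{1}_{\omega_n,\omega_n}$-element acting on $\HH_q(\BS_n)=x_{\omega_n}\HH_q(\BS_n)$ by right multiplication by $h_1h_2$. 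Going down then right gives $\widetilde{\rho}(\iota_m(h_1)\otimes\widehat{\iota}_m(h_2))$; here $\iota_m(h_1)$ is right multiplication by $h_1$ on $x_{\omega_m}\HH_q(\BS_m)=\HH_q(\BS_m)$, and similarly for $\widehat{\iota}_m(h_2)$. Since $\omega_n=(\omega_m,\omega_m)$ as a composition (both $\mu^{(1)}$ and $\mu^{(2)}$ equal $\omega_m$), I plug $\mu=\omega_n$ into the defining formula: writing a general $h\in\HH_q(\BS_n)$ in the form \eqref{anyH}, the formula produces $\sum_{w_1,w_2,d}A^d_{w_1,w_2}(T_{w_1}h_1)(T_{w_2}h_2)T_d$. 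Using that the $T_{w_1}$ commute with the $T_{w_2}$ (disjoint supports) and that multiplication by $h_1h_2$ is $\HH_q(\BS_{(m,m)})$-balanced against the decomposition $h=\sum A^d_{w_1,w_2}T_{w_1}T_{w_2}T_d$, this equals $h\cdot h_1h_2$, i.e. exactly $\iota_n(h_1h_2)$ applied to $h$. So the square commutes.

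For the normalization $\widetilde{\rho}(\phi^1_{\omega_m,\omega_m}\otimes\widehat{\phi}^1_{\omega_m,\omega_m})=\phi^1_{\omega_n,\omega_n}$, I note that $\phi^1_{\omega_m,\omega_m}$ is the identity endomorphism of $x_{\omega_m}\HH_q(\BS_m)$, i.e. $f(x_{\omega_m}T_{w_1})=T_{w_1}$; likewise $\widehat\phi^1$. Feeding this into the defining formula with $\mu=\omega_n$ gives $\sum_{w_1,w_2,d}A^d_{w_1,w_2}T_{w_1}T_{w_2}T_d=h$ by \eqref{anyH}, so $\widetilde{\rho}(\phi^1_{\omega_m,\omega_m}\otimes\widehat\phi^1_{\omega_m,\omega_m})$ is the identity on $x_{\omega_n}\HH_q(\BS_n)$ and annihilates $x_\mu\HH_q(\BS_n)$ for $\mu\neq(\mu^{(1)},\mu^{(2)})$; but $\omega_n$ is the unique composition of $n$ all of whose parts are $\leq 1$, so in the relevant indexing $\phi^1_{\omega_n,\omega_n}$ is exactly the idempotent which is the identity on the $\omega_n$-summand and zero elsewhere. (One should also observe that $\widetilde\rho$ respects the condition ``$\mu^{(1)}\in\Lambda(m)$'' consistently; for $\mu=\omega_n$ this holds.) Hence the two agree.

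Finally, for injectivity of $\widetilde{\rho}$: I would argue that $\widetilde\rho$ is actually an algebra homomorphism (which already requires checking $\widetilde\rho(f_1f_2\otimes g_1g_2)=\widetilde\rho(f_1\otimes g_1)\widetilde\rho(f_2\otimes g_2)$ from the formula, using the explicit description of composition of standard-basis endomorphisms and the Mackey-type decomposition recalled in the proof of the preceding lemma) sending the identity $\phi^1_{\omega_m,\omega_m}\otimes\widehat\phi^1_{\omega_m,\omega_m}$ of $S_q(m)\otimes\widehat S_q(m)$ to a nonzero idempotent. Injectivity then follows by a standard argument: evaluate $\widetilde\rho(f\otimes g)$ on the modules $x_\mu\HH_q(\BS_n)$ with $\mu=(\mu^{(1)},\mu^{(2)})$ ranging over $\Lambda(m)\times\Lambda(m)$, where by the isomorphism in the proof of the previous lemma the action of $\widetilde\rho(f\otimes g)$ recovers the action of $f\otimes g$ on $x_{\mu^{(1)}}\HH_q(\BS_m)\otimes\widehat x_{\mu^{(2)}}\HH_q(\BS_m)$; since these modules collectively detect all of $S_q(m)\otimes\widehat S_q(m)$, the map $\widetilde\rho$ has trivial kernel. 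The main obstacle I expect is the bookkeeping in the injectivity/homomorphism step — specifically, showing that $\widetilde\rho$ restricted to each pair of ``diagonal'' idempotent blocks $\phi^1_{\mu^{(1)},\mu^{(1)}}\otimes\widehat\phi^1_{\mu^{(2)},\mu^{(2)}}$ is precisely the identification $\Hom_{\HH_q(\BS_{(m,m)})}(\cdots)\hookrightarrow\Hom_{\HH_q(\BS_n)}(x_\mu\HH_q(\BS_n),x_\mu\HH_q(\BS_n))$ coming from the induction isomorphism, which is where the coset combinatorics of $\DD_{(m,m)}$ genuinely enters; the commutativity and normalization claims are essentially formal once the defining formula is unwound.
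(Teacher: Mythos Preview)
The paper's own proof is literally the sentence ``This follows from direct verification,'' so your outline is already far more than what the paper supplies, and your overall strategy (unwind the defining formula for $\widetilde\rho$, check the square on $x_{\omega_n}\HH_q(\BS_n)$, then use the induction isomorphism from the preceding lemma to see injectivity) is exactly the intended one.

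There is, however, a concrete error in your commutativity check. You say that $\iota_n(h_1h_2)$ acts on $x_{\omega_n}\HH_q(\BS_n)=\HH_q(\BS_n)$ by \emph{right} multiplication by $h_1h_2$, and likewise for $\iota_m(h_1)$, $\widehat\iota_m(h_2)$. This cannot be correct: $\HH_q(\BS_n)$ is being viewed as a \emph{right} module over itself, so a right-$\HH_q(\BS_n)$-endomorphism $\varphi$ satisfies $\varphi(x)=\varphi(1)x$, i.e.\ it is \emph{left} multiplication by $\varphi(1)$; right multiplication by a non-central element is not even an $\HH_q(\BS_n)$-endomorphism. With your convention the computation breaks down exactly where you assert
\[
\sum_{w_1,w_2,d}A^d_{w_1,w_2}(T_{w_1}h_1)(T_{w_2}h_2)T_d \;=\; h\,h_1h_2,
\]
since $T_d$ does not commute with $h_1,h_2$ in general. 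If instead you use left multiplication, the formula yields
\[
\sum_{w_1,w_2,d}A^d_{w_1,w_2}(h_1T_{w_1})(h_2T_{w_2})T_d \;=\; h_1h_2\sum_{w_1,w_2,d}A^d_{w_1,w_2}T_{w_1}T_{w_2}T_d \;=\; h_1h_2\,h,
\]
which agrees with $\iota_n(h_1h_2)(h)$, and the square commutes. Your normalization and injectivity arguments are fine once this is fixed (for the normalization, you should also note that for $\mu=(\mu^{(1)},\mu^{(2)})$ with $\mu^{(1)}\in\Lambda(m)$ but $\mu\neq\omega_n$, the image vanishes because $\phi^1_{\omega_m,\omega_m}$ kills $x_{\mu^{(1)}}\HH_q(\BS_m)$ when $\mu^{(1)}\neq\omega_m$).
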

\begin{proof} This follows from direct verification.
\end{proof}

Note that the unit element of $S_q(m)\otimes \widehat{S}_q(m)$, i.e., $$
\bigl(\sum_{\lam\in\Lambda(m)}\phi_{\lam,\lam}^{1}\bigr)\otimes\bigl(\sum_{\lam\in\Lambda(m)}
\widehat{\phi}_{\lam,\lam}^{1}\bigr),
$$
is not mapped by $\widetilde{\rho}$ to the unit element of $S_q(n)$. Henceforth, we identify $S_q(m)\otimes
\widehat{S}_q(m)$ with a non-unital
$K$-subalgebra of $S_q(n)$ via the injection $\widetilde{\rho}$. Recall
that (\cite[Remark 2.4]{Hu1}) the algebra $A(m)$ was generated by
$T_{1},\cdots,T_{m-1}$, $T_{m+1},\cdots,T_{n-1},h(m)$
and satisfy the following relations
$$\begin{aligned}
&(T_i+1)(T_i-q)=0,\quad\text{for $1\leq i\leq n-1, i\neq m$,}\qquad h(m)^2=z_{m,m},\\
&T_iT_{i+1}T_i=T_{i+1}T_{i}T_{i+1},\quad\text{for $1\leq i\leq n-2$, $i\notin\{m-1,m\}$}\\
&T_iT_j=T_jT_i,\quad\text{for $1\leq i<j-1\leq n-2$, $i,j\neq m$}\\
&T_{i}h(m)=\begin{cases} h(m)T_{i+m} &\text{{\rm if}\,\,\, $1\leq i<m$,}\\
h(m)T_{i-m} &\text{{\rm if}\,\,\, $m<i\leq n-1$,}\end{cases}
\end{aligned}
$$
where $z_{m,m}$ is a central element in the Hecke algebra
$\HH_q(\BS_{(m,m)})$ (see \cite{DJ3}). We are going to lift
the elements $h(m), z_{m,m}$ to the corresponding $q$-Schur
algebras.

\begin{lemma} Let $z$ be an element in the center of $\HH_q(\BS_n)$. We define $Z\in S_q(n)$ by $$
Z\bigl(x_{\lam}h\bigr)=x_{\lam}hz,\quad \forall\,\lam\in\Lambda(n), h\in\HH_q(\BS_n).
$$
Then $Z$ lies in the center of $S_q(n)$.
\end{lemma}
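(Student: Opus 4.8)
The plan is to verify directly from the definition that $Z$ commutes with every standard basis element $\phi^{d}_{\lambda,\mu}$ of $S_q(n)$; since these span $S_q(n)$ over $K$, this suffices. First I would recall that the $q$-Schur algebra acts on $\bigoplus_{\nu\in\Lambda(n)}x_{\nu}\HH_q(\BS_n)$ by $\HH_q(\BS_n)$-module homomorphisms, so every element of $S_q(n)$ is in particular a right $\HH_q(\BS_n)$-module map on this direct sum. The element $Z$ is well-defined as a map: right multiplication by the central element $z$ sends $x_{\lambda}\HH_q(\BS_n)$ into itself (because $x_{\lambda}hz = x_{\lambda}(hz)$ with $hz\in\HH_q(\BS_n)$), and it is visibly a right $\HH_q(\BS_n)$-module endomorphism, hence $Z\in S_q(n)$.

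Next I would compute both composites $Z\circ\phi^{d}_{\lambda,\mu}$ and $\phi^{d}_{\lambda,\mu}\circ Z$ on an arbitrary generator $x_{\mu}h$ of $x_{\mu}\HH_q(\BS_n)$, for $h\in\HH_q(\BS_n)$. On one hand,
$$
\bigl(Z\circ\phi^{d}_{\lambda,\mu}\bigr)(x_{\mu}h)
= Z\Bigl(\sum_{w\in\BS_{\lambda}d\BS_{\mu}}T_w h\Bigr)
= \Bigl(\sum_{w\in\BS_{\lambda}d\BS_{\mu}}T_w h\Bigr)z
= \sum_{w\in\BS_{\lambda}d\BS_{\mu}}T_w (hz),
$$
where in applying $Z$ one uses that $\sum_{w}T_w h = x_{\lambda}h'$ for a suitable $h'$, so $Z$ is given by right multiplication by $z$. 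On the other hand,
$$
\bigl(\phi^{d}_{\lambda,\mu}\circ Z\bigr)(x_{\mu}h)
= \phi^{d}_{\lambda,\mu}(x_{\mu}hz)
= \sum_{w\in\BS_{\lambda}d\BS_{\mu}}T_w (hz).
$$
These two expressions coincide, so $Z\phi^{d}_{\lambda,\mu} = \phi^{d}_{\lambda,\mu}Z$ on $x_{\mu}\HH_q(\BS_n)$; on the other summands $x_{\nu}\HH_q(\BS_n)$ with $\nu\neq\mu$ both composites vanish since $\phi^{d}_{\lambda,\mu}$ does. Hence $Z$ commutes with every standard basis element, and since these form a $K$-basis of $S_q(n)$ by \cite[Theorem 1.4]{DJ5}, $Z$ lies in the center of $S_q(n)$.

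The only point requiring a small amount of care — and what I would regard as the main (minor) obstacle — is justifying that $Z$, defined a priori by its values on the module generators $x_{\lambda}h$, is actually a well-defined element of $S_q(n)$, i.e.\ that these values assemble into a single $\HH_q(\BS_n)$-module endomorphism of $\bigoplus_{\lambda}x_{\lambda}\HH_q(\BS_n)$. This is immediate once one observes that right multiplication by a central element is an $\HH_q(\BS_n)$-bimodule map and preserves each right ideal $x_{\lambda}\HH_q(\BS_n)$; then the computation above is a routine unwinding of the definition of composition in $S_q(n)$. No twining-character or Morita-equivalence input is needed here.
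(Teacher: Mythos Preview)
Your proposal is correct and follows essentially the same route as the paper: first observe that right multiplication by the central element $z$ gives a well-defined right $\HH_q(\BS_n)$-endomorphism of each $x_{\lambda}\HH_q(\BS_n)$, then verify $Z\phi_{\lambda,\mu}^{d}=\phi_{\lambda,\mu}^{d}Z$ by evaluating both sides on $x_{\mu}h$ exactly as you do. The paper's argument is the same computation, presented slightly more tersely.
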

\begin{proof} The condition that $z$ is central in $\HH_q(\BS_n)$ implies that the above-defined map $Z$ is indeed a
right $\HH_q(\BS_n)$-homomorphism. Hence $Z\in S_q(n)$. It remains to check that $\phi_{\lam,\mu}^{d}Z=Z\phi_{\lam,\mu}^{d}$ for any $\lam,\mu\in\Lambda(n)$ and
any $d\in\DD_{\lam,\mu}$.
Applying \cite[(2.1)]{DJ5}, and by definition, for any $h\in\HH_q(\BS_n)$, $$\begin{aligned}
\phi_{\lam,\mu}^{d}Z\bigl(x_{\mu}h\bigr)
&=\bigl(\phi_{\lam,\mu}^{d}\bigr)(x_{\mu}hz)=\sum_{w\in
\BS_{\lam}d\BS_{\mu}}T_w hz,\\
Z\phi_{\lam,\mu}^{d}\bigl(x_{\mu}h\bigr)
&=Z\sum_{w\in
\BS_{\lam}d\BS_{\mu}}T_w h=\sum_{w\in
\BS_{\lam}d\BS_{\mu}}T_w hz.
\end{aligned}
$$
This proves that $\phi_{\lam,\mu}^{d}Z=Z\phi_{\lam,\mu}^{d}$, as required.
\end{proof}

\begin{definition} We define $H(m), Z_{m,m}\in S_q(n)$ as follows: for any $\lam\in\Lambda(n), h\in\HH_q(\BS_n)$,
$$\begin{aligned}
H(m)(x_{\lam}h):&=\begin{cases}x_{\widehat{\lam}}h(m)h, &\text{if $\lam=(\lam^{(1)},\lam^{(2)})$ with $\lam^{(1)}\in
\Lambda(m)$,}\\
0, &\text{otherwise,}\end{cases}\\
Z_{m,m}(x_{\lam}h):&=\begin{cases}x_{{\lam}}z_{m,m}h, &\text{if $\lam=(\lam^{(1)},\lam^{(2)})$ with $\lam^{(1)}\in
\Lambda(m)$,}\\
0, &\text{otherwise.}\end{cases}
\end{aligned}
$$
\end{definition}
\noindent

\begin{lemma} \label{lm36} With the notations as above, we have that $$
H(m)^2=Z_{m,m},
$$
and $Z_{m,m}$ lies in the center of $S_q(m)\otimes\widehat{S}_q(m)$. Moreover $Z_{m,m}$ is invertible in
$S_q(m)\otimes\widehat{S}_q(m)$.
\end{lemma}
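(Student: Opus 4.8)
The plan is to verify each of the three assertions directly from the definitions of $H(m)$, $Z_{m,m}$ and the action of $\widetilde{\rho}$, reducing everything to the already-established facts $h(m)^2=z_{m,m}$ inside $A(m)\subseteq\HH_q(\BS_n)$ and the centrality and invertibility of $z_{m,m}$ in $\HH_q(\BS_{(m,m)})$ recorded from \cite{DJ3}.

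First I would check $H(m)^2=Z_{m,m}$. Both sides are elements of $S_q(n)$, so it suffices to evaluate them on $x_\lam h$ for each $\lam\in\Lambda(n)$ and each $h\in\HH_q(\BS_n)$. If $\lam=(\lam^{(1)},\lam^{(2)})$ with $\lam^{(1)}\in\Lambda(m)$, then by definition $H(m)(x_\lam h)=x_{\widehat\lam}h(m)h$, where $\widehat\lam=(\lam^{(2)},\lam^{(1)})$ again has its first $m$ entries summing to $m$ (namely $\widehat\lam^{(1)}=\lam^{(2)}\in\Lambda(m)$), so applying $H(m)$ again gives $x_{\widehat{\widehat\lam}}h(m)(h(m)h)=x_\lam h(m)^2 h = x_\lam z_{m,m}h$, using $\widehat{\widehat\lam}=\lam$ and $h(m)^2=z_{m,m}$. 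This equals $Z_{m,m}(x_\lam h)$. If instead $\lam^{(1)}\notin\Lambda(m)$ then $H(m)(x_\lam h)=0=Z_{m,m}(x_\lam h)$, so $H(m)^2=Z_{m,m}$ as maps, hence in $S_q(n)$. The mild point to be careful about here is that $h(m)$ indeed maps $x_{\widehat\lam}\HH_q(\BS_n)$ into $x_\lam\HH_q(\BS_n)$ and that the formula $x_{\widehat\lam}h(m)=$ an element of $x_\lam\HH_q(\BS_n)$ is consistent with the relations $T_i h(m)=h(m)T_{\widehat i}$ of $A(m)$; this is exactly the content of the construction of $h(m)$ in \cite{Hu1}, so no new work is needed.

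Next, centrality of $Z_{m,m}$ in $S_q(m)\otimes\widehat S_q(m)$. By Lemma (the one preceding the Definition), for any element $z$ in the center of $\HH_q(\BS_{(m,m)})$, the operator $x_\mu h\mapsto x_\mu z h$ (with the convention that it is $0$ on the summands $x_\lam\HH_q(\BS_n)$ with $\lam^{(1)}\notin\Lambda(m)$) lies in the center of $S_q(m)\otimes\widehat S_q(m)$: the proof of that lemma applies verbatim with $S_q(n)$ replaced by the subalgebra $\widetilde\rho(S_q(m)\otimes\widehat S_q(m))$, since the standard basis elements of the latter are (images under $\widetilde\rho$ of) $\phi^{d_1}_{\lam^{(1)},\mu^{(1)}}\otimes\widehat\phi^{d_2}_{\lam^{(2)},\mu^{(2)}}$ and these commute with multiplication-by-$z$ on the right by exactly the computation $\sum_{w}T_w(hz)=\bigl(\sum_w T_w h\bigr)z$ carried out there. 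Since $z_{m,m}$ is central in $\HH_q(\BS_{(m,m)})$ by \cite{DJ3}, and $Z_{m,m}$ is precisely the corresponding operator, $Z_{m,m}$ is central in $S_q(m)\otimes\widehat S_q(m)$.

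Finally, invertibility of $Z_{m,m}$ in $S_q(m)\otimes\widehat S_q(m)$. Since $z_{m,m}$ is an invertible central element of $\HH_q(\BS_{(m,m)})$, let $w\in\HH_q(\BS_{(m,m)})$ be its inverse; $w$ is again central. Define $W\in S_q(m)\otimes\widehat S_q(m)$ by $W(x_\lam h)=x_\lam w h$ when $\lam^{(1)}\in\Lambda(m)$ and $W(x_\lam h)=0$ otherwise; by the lemma just used, $W$ lies in $S_q(m)\otimes\widehat S_q(m)$, and $Z_{m,m}W=WZ_{m,m}$ acts on $x_\lam h$ (for $\lam^{(1)}\in\Lambda(m)$) by $x_\lam z_{m,m}w h=x_\lam h$, which is the identity of $S_q(m)\otimes\widehat S_q(m)$ (recall this identity is $\widetilde\rho$ of the unit of $S_q(m)\otimes\widehat S_q(m)$, i.e. the projection onto the summands with $\lam^{(1)}\in\Lambda(m)$). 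Hence $W=Z_{m,m}^{-1}$. I expect the only genuine subtlety in the whole argument to be bookkeeping: making sure that at every stage the operators are restricted to, and land in, the correct (non-unital) subalgebra $\widetilde\rho(S_q(m)\otimes\widehat S_q(m))$ rather than all of $S_q(n)$, and that ``the identity'' always means the identity of that subalgebra; once the conventions are pinned down, all three statements are immediate.
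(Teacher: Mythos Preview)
Your proposal is correct and follows essentially the same route as the paper: both reduce $H(m)^2=Z_{m,m}$ to the known identity $h(m)^2=z_{m,m}$ by direct evaluation on $x_\lam h$, and both deduce the centrality and invertibility of $Z_{m,m}$ in $S_q(m)\otimes\widehat{S}_q(m)$ from the corresponding properties of $z_{m,m}$ in $\HH_q(\BS_{(m,m)})$ (the paper even writes down the same explicit inverse you construct). Your write-up is simply more detailed than the paper's, which dispatches the centrality claim with ``direct verification.''
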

\begin{proof} The equality $H(m)^2=Z_{m,m}$ follows from direct verification and the fact that $h(m)^2=z_{m,m}$.

Since $z_{m,m}$ lies in $\HH_q(\BS_{(m,m)})$ and
$z_{m,m}$ is invertible in $\HH_q(\BS_{(m,m)})$ (see \cite[(4.12)]{DJ3}), it follows that
$Z_{m,m}\in S_q(m)\otimes\widehat{S}_q(m)$ and $Z_{m,m}$ is
invertible in $S_q(m)\otimes\widehat{S}_q(m)$. The inverse of
$Z_{m,m}$ is given by $$
Z_{m,m}^{-1}(x_{\lam}h):=\begin{cases}x_{{\lam}}z_{m,m}^{-1}h,
&\text{if $\lam=(\lam^{(1)},
\lam^{(2)})$ with $\lam^{(1)}\in\Lambda(m)$,}\\
0, &\text{otherwise,}\end{cases}
$$
for any $\lam\in\Lambda(n), h\in\HH_q(\BS_n)$. The claim that
$Z_{m,m}$ lies in the center of $S_q(m)\otimes\widehat{S}_q(m)$ also
follows from direct verification.
\end{proof}

Note that $Z_{m,m}$ is invertible in $S_q(m)\otimes\widehat{S}_q(m)$
does not mean that $Z_{m,m}$ is invertible in $S_q(n)$. The point is
that the unit element of $S_q(n)$ is not the same as the unit
element of $S_q(m)\otimes\widehat{S}_q(m)$. By the same reason, we
know that $H(m)$ is not an invertible element in $S_q(n)$.

We identify $\HH_q(\BS_{(m,m)})$ with a subalgebra of
$S_q(m)\otimes\widehat{S}_q(m)$ via the injection
$\iota_m\otimes\widehat{\iota}_m$. For any
$\lam,\mu\in\Lambda(m)$, let $\mathcal{D}_{\lam,\mu}$ be the set
of distinguished $\BS_{\lam}$-$\BS_{\mu}$-double coset
representatives in $\BS_m$. For any $d\in\mathcal{D}_{\lam,\mu}$, it
is easy to see that $\widehat{d}$ is a distinguished
$\widehat{\BS}_{\lam}$-$\widehat{\BS}_{\mu}$-double coset
representative in $\BS_{\{m+1,\cdots,n\}}$. Here
$\widehat{\BS}_{\lam}, \widehat{\BS}_{\mu}$ denote the image of
$\BS_{\lam}, \BS_{\mu}$ under the automorphism
\,\,\,$\widehat{\null}$\,\,\,.

\begin{lemma} \label{lm37} The automorphism \,\,$\widehat{\null}$\,\, of
$\HH_q(\BS_{(m,m)})$ can be uniquely extended to a $K$-algebra
automorphism (still denoted by \,\,$\widehat{\null}$\,\,) of
$S_q(m)\otimes\widehat{S}_q(m)$ such that for any
$\lam,\mu\in\Lambda(m)$ and any $d\in\mathcal{D}_{\lam,\mu}$, $$
\widehat{\phi_{\lam,\mu}^d}=\widehat{\phi}_{\lam,\mu}^{\widehat{d}}.
$$
Furthermore,
$H(m)\phi_{\lam,\mu}^d=\widehat{\phi}_{\lam,\mu}^{\widehat{d}}H(m)$
and
$\phi_{\lam,\mu}^dH(m)=H(m)\widehat{\phi}_{\lam,\mu}^{\widehat{d}}$.
\end{lemma}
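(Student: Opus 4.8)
The plan is to build the extended automorphism $\widehat{\null}$ of $S_q(m)\otimes\widehat{S}_q(m)$ directly on the standard basis, by the formula $\widehat{\phi_{\lam,\mu}^d}:=\widehat{\phi}_{\lam,\mu}^{\widehat d}$ for $\lam,\mu\in\Lambda(m)$, $d\in\mathcal{D}_{\lam,\mu}$, and symmetrically $\widehat{\widehat{\phi}_{\lam,\mu}^{\widehat d}}:=\phi_{\lam,\mu}^{d}$. Since $\{\phi_{\lam,\mu}^d\mid \lam,\mu\in\Lambda(m),\,d\in\mathcal{D}_{\lam,\mu}\}$ is a $K$-basis of $S_q(m)$ and $\{\widehat{\phi}_{\lam,\mu}^{\widehat d}\}$ is a $K$-basis of $\widehat{S}_q(m)$, the map $\phi_{\lam,\mu}^d\mapsto\widehat{\phi}_{\lam,\mu}^{\widehat d}$ is a $K$-linear bijection from $S_q(m)$ onto $\widehat{S}_q(m)$ and vice versa; tensoring and swapping the two factors yields a $K$-linear bijection of $S_q(m)\otimes\widehat{S}_q(m)$. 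First I would check this map is multiplicative. The multiplication of standard basis elements in $S_q(m)$ is governed by the structure constants of \cite[(2.1)]{DJ5}, which are determined entirely by the combinatorics of Young subgroups and distinguished double coset representatives in $\BS_m$; since the group automorphism $\widehat{\null}$ of $\BS_{(m,m)}$ restricts to an isomorphism $\BS_m\xrightarrow{\sim}\BS_{\{m+1,\cdots,n\}}$ that carries $\BS_\lam$ to $\widehat{\BS}_\lam$, preserves lengths, and carries $\mathcal{D}_{\lam,\mu}$ to the set of distinguished double coset representatives for $\widehat{\BS}_\lam,\widehat{\BS}_\mu$ in $\BS_{\{m+1,\cdots,n\}}$, these structure constants match on the nose. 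Hence $\widehat{\null}\colon S_q(m)\to\widehat{S}_q(m)$ is an algebra isomorphism, and likewise in the other direction, so the combined map on $S_q(m)\otimes\widehat{S}_q(m)$ is an algebra automorphism. Uniqueness of the extension follows because $S_q(m)\otimes\widehat{S}_q(m)$ is generated as an algebra by $\HH_q(\BS_{(m,m)})$ together with the idempotents $\phi_{\lam,\lam}^1$, $\widehat{\phi}_{\lam,\lam}^1$ (the standard generators of $q$-Schur algebras), on which the value of any extension of $\widehat{\null}$ is forced.

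Next I would verify the two intertwining relations with $H(m)$. Both reduce, via the definition of $H(m)$ in Definition~3.5 and the explicit action of $\phi_{\lam,\mu}^d$ on the generators $x_\nu\HH_q(\BS_n)$, to an identity that is checked by evaluating both sides on an arbitrary element $x_\nu h$ with $\nu=(\nu^{(1)},\nu^{(2)})$, $\nu^{(1)}\in\Lambda(m)$ (both sides being zero otherwise). For $H(m)\phi_{\lam,\mu}^d=\widehat{\phi}_{\lam,\mu}^{\widehat d}H(m)$: applying $\phi_{\lam,\mu}^d$ first produces a sum $\sum_{w\in\BS_\lam d\BS_\mu}T_w(\cdots)$ supported on the first factor, and then $H(m)$ relabels $\lam\mapsto\widehat\lam$ and left-multiplies by $h(m)$; using the defining relation $T_i h(m)=h(m)T_{\widehat i}$ of $A(m)$ (equivalently the way $h(m)$ conjugates $\HH_q(\BS_m)$ onto $\HH_q(\BS_{\{m+1,\cdots,n\}})$) one commutes $h(m)$ past $T_w$ to turn it into $T_{\widehat w}$, and the resulting sum $\sum_{\widehat w\in\widehat{\BS}_\lam\widehat d\,\widehat{\BS}_\mu}T_{\widehat w}(\cdots)$ is exactly $\widehat{\phi}_{\lam,\mu}^{\widehat d}H(m)$ evaluated on $x_\nu h$. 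The second relation $\phi_{\lam,\mu}^dH(m)=H(m)\widehat{\phi}_{\lam,\mu}^{\widehat d}$ is the mirror image, using instead the first-factor/second-factor roles reversed and the relation $T_{\widehat i}h(m)=h(m)T_i$ (i.e. $h(m)$ squares into the centre).

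The main obstacle I anticipate is bookkeeping rather than conceptual: one has to be careful that $\widetilde\rho$ identifies $S_q(m)\otimes\widehat{S}_q(m)$ with a \emph{non-unital} subalgebra of $S_q(n)$, so the idempotents $\phi_{\lam,\lam}^1$ for $\lam\in\Lambda(m)$ must consistently be read as the images under $\widetilde\rho$ of $\phi_{\lam,\lam}^1\otimes(\sum_{\nu}\widehat{\phi}_{\nu,\nu}^1)$ and similarly for the hatted factor, and the double-coset arithmetic in $\BS_n$ must be correctly decomposed along $\BS_{(m,m)}$-cosets using \eqref{anyH}. Concretely, the one genuinely load-bearing computation is the commutation of $h(m)$ past the sum $\sum_{w\in\BS_\lam d\BS_\mu}T_w$; once that is pinned down using the $A(m)$-relations $T_i h(m)=h(m)T_{\widehat i}$, everything else is formal. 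I would therefore present the proof as: (i) the structure-constant argument for $\widehat{\null}$ being a well-defined algebra automorphism; (ii) uniqueness from generation by $\HH_q(\BS_{(m,m)})$ and the Schur idempotents; (iii) the two $H(m)$-intertwining identities checked by evaluation on $x_\nu h$, the crux being the $h(m)$-conjugation relation.
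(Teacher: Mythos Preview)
Your approach is essentially the paper's: define the map on the standard basis and verify multiplicativity (the paper does this by the direct check $\widehat{\phi}_{\mu,\nu}^{\widehat d'}(\widehat{x}_\nu\widehat{h})=\widehat{\phi_{\mu,\nu}^{d'}(x_\nu h)}$, which is just your structure-constant argument unpacked), and then check the $H(m)$-intertwining relations by evaluation on $x_\nu h$ using $T_i h(m)=h(m)T_{\widehat i}$, exactly as you outline (the paper simply calls this ``straightforward'' and omits it).

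One caution on your uniqueness argument: you assert that $S_q(m)\otimes\widehat{S}_q(m)$ is generated as an algebra by $\HH_q(\BS_{(m,m)})$ together with the weight idempotents $\phi_{\lam,\lam}^1,\widehat{\phi}_{\lam,\lam}^1$. This is false. The Hecke algebra sits only in the single corner $\phi_{\omega_m,\omega_m}^{1}S_q(m)\phi_{\omega_m,\omega_m}^{1}$, and multiplying it against the diagonal idempotents cannot produce any off-diagonal piece $\phi_{\lam,\lam}^{1}S_q(m)\phi_{\mu,\mu}^{1}$ with $\lam,\mu\neq\omega_m$; already for $m=2$ the subalgebra so generated is $4$-dimensional inside the $10$-dimensional $S_q(2)$. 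Fortunately the uniqueness being asserted in the lemma is the trivial kind: the displayed formula $\widehat{\phi_{\lam,\mu}^d}=\widehat{\phi}_{\lam,\mu}^{\widehat d}$ prescribes the image of every standard basis element, hence determines the $K$-linear map completely, and the only content is that this map is multiplicative. The paper does not argue uniqueness separately for precisely this reason, so just drop that paragraph.
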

\begin{proof} For the first claim, it suffices to show that the map
which sends $\phi_{\lam,\mu}^d$ to
$\widehat{\phi}_{\lam,\mu}^{\widehat{d}}$, for any
$\lam,\mu\in\Lambda(m)$ and any $d\in\mathcal{D}_{\lam,\mu}$, can
be uniquely extends to a $K$-algebra map. To this end, it is enough
to show that for any $\lam,\mu,\nu\in\Lambda(m)$ and any
$d\in\mathcal{D}_{\lam,\mu}, d'\in\mathcal{D}_{\mu,\nu}$,
\addtocounter{equation}{6}
\begin{equation}\label{equa2}
\widehat{\phi}_{\lam,\mu}^{\widehat{d}}\widehat{\phi}_{\mu,\nu}^{\widehat{d}'}=\widehat{
{\phi}_{\lam,\mu}^{d}\phi_{\mu,\nu}^{d'}}.\end{equation}
Suppose that ${\phi}_{\lam,\mu}^{d}\phi_{\mu,\nu}^{d'}=\sum_{d''\in\mathcal{D}_{\lam,\nu}}
A_{d''}{\phi}_{\lam,\nu}^{d''}$, where $A_{d''}\in K$ for each $d''$. Then
$$
\widehat{{\phi}_{\lam,\mu}^{d}\phi_{\mu,\nu}^{d'}}=\sum_{d''\in\mathcal{D}_{\lam,\nu}}
A_{d''}\widehat{\phi}_{\lam,\nu}^{\widehat{d''}}.$$

By definition, it is easy to verify that for any $h\in\HH_q(\BS_m)$,
$$
\widehat{\phi}_{\mu,\nu}^{\widehat{d}'}(\widehat{x}_{\nu}\widehat{h})=\widehat{\phi_{\mu,\nu}^{{d}'}(x_{\nu}h)},\quad
\widehat{\phi}_{\lam,\mu}^{\widehat{d}}(\widehat{x}_{\mu}\widehat{h})=\widehat{\phi_{\lam,\mu}^{{d}}(x_{\mu}h)}.
$$
Therefore, $$
\widehat{\phi}_{\lam,\mu}^{\widehat{d}}\widehat{\phi}_{\mu,\nu}^{\widehat{d}'}(\widehat{x}_{\nu})=\widehat{
\phi_{\lam,\mu}^{{d}}\phi_{\mu,\nu}^{{d}'}(x_{\nu})}=\sum_{d''\in\mathcal{D}_{\lam,\nu}}A_{d''}\widehat{{\phi}_{\lam,\nu}^{d''}(x_{\nu})}=
\widehat{{\phi}_{\lam,\mu}^{d}\phi_{\mu,\nu}^{d'}}(\widehat{x}_{\nu}).
$$
This proves (\ref{equa2}), as required. The proof of the last two
equalities are straightforward and will be omitted.
\end{proof}

\addtocounter{theorem}{1}
\begin{definition} Let $\widetilde{A}(m)$ denote the non-unital $K$-subalgebra of $S_q(n)$ generated by
$S_q(m)\otimes\widehat{S}_q(m)$ and $H(m)$.
\end{definition}

The algebra $\widetilde{A}(m)$ can be regarded as a covering of the
algebra $A(m)$. Note that the unit element of $\widetilde{A}(m)$ is
the unit element of $S_q(m)\otimes \widehat{S}_q(m)$, which is
different from the unit element of $S_q(n)$. Note also that, by
Lemma \ref{lm36}, although $H(m)$ is not invertible in $S_q(n)$,
$H(m)$ is indeed an invertible element in the algebra
$\widetilde{A}(m)$.

\begin{definition} For each partition $\lam\in\Lambda^{+}(m)$, we define $$
Z_{\lam}:=y_{\lam'}T_{w_{\lam'}}\phi_{\lam,\omega_{m}}^{1}.
$$
\end{definition}

By \cite{DJ5}, the right ideal of $S_q(m)$ generated by $Z_{\lam}$ is the Weyl module $\Delta_{\lam}$ of $S_q(m)$ with
highest weight $\lam$.
In a similar way, we can define the element $\widehat{Z}_{\lam}$ and the Weyl module $\widehat{\Delta}_{\lam}$ of
$\widehat{S}_q(m)$.

\begin{lemma} Let $\lam,\mu\in\Lambda^{+}(m)$. Then we have $$
\bigl(z'_{\lam}\otimes\widehat{z'_{\mu}}\bigr)z_{m,m}=f_{(\lam,\mu)}(q)\bigl(z'_{\lam}\otimes\widehat{z'_{\mu}}\bigr),
\quad
\bigl(Z_{\lam}\otimes\widehat{Z}_{\mu}\bigr)Z_{m,m}=f_{(\lam,\mu)}(q)\bigl(Z_{\lam}\otimes\widehat{Z}_{\mu}\bigr).
$$
\end{lemma}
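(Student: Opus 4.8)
The plan is to treat the two displayed equalities separately: the first is in essence the definition of $f_{(\lam,\mu)}(v)$, and the second will be deduced from it by transporting everything through the injection $\widetilde{\rho}$ to the level of $q$-Schur algebras.

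\emph{The first equality.} Recall that $z_{m,m}$ lies in the centre of $\HH_q(\BS_{(m,m)})=\HH_q(\BS_m)\otimes\HH_q(\BS_{\{m+1,\cdots,n\}})$, and that, by \cite[Theorem 3.5]{DJ2} and \cite[(5.2),(5.3)]{Mu} recalled in Section 2, the right ideal $\bigl(z'_{\lam}\otimes\widehat{z'_{\mu}}\bigr)\HH_q(\BS_{(m,m)})=z'_{\lam}\HH_q(\BS_m)\otimes\widehat{z'_{\mu}}\HH_q(\BS_{\{m+1,\cdots,n\}})$ is a copy of the dual Specht module $S_{\lam}\otimes\widehat{S}_{\mu}$ of $\HH_q(\BS_{(m,m)})$. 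By \cite[Lemma 3.2]{Hu1}, which is precisely where $f_{(\lam,\mu)}(v)$ is introduced, right multiplication by the central element $z_{m,m}$ acts on this module as the scalar $f_{(\lam,\mu)}(q)$; since $z_{m,m}$ is central it acts by an $\HH_q(\BS_{(m,m)})$-module endomorphism, so this scalar does not depend on the chosen cyclic generator, and evaluating on $z'_{\lam}\otimes\widehat{z'_{\mu}}$ gives the first equality.

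\emph{The second equality.} By the discussion preceding the lemma, $\widetilde{\rho}$ realises $S_q(m)\otimes\widehat{S}_q(m)$ as a (non-unital) subalgebra of $S_q(n)$, and both $Z_{\lam}\otimes\widehat{Z}_{\mu}$ and $Z_{m,m}$ lie in it; so it suffices to prove $(Z_{\lam}\otimes\widehat{Z}_{\mu})Z_{m,m}=f_{(\lam,\mu)}(q)\bigl(Z_{\lam}\otimes\widehat{Z}_{\mu}\bigr)$ already in $S_q(m)\otimes\widehat{S}_q(m)$. I would verify this by evaluating both sides on the summands $x_{\nu}\HH_q(\BS_m)\otimes\widehat{x}_{\xi}\HH_q(\BS_{\{m+1,\cdots,n\}})$ with $\nu,\xi\in\Lambda(m)$: on each such summand $Z_{m,m}$ acts, by its definition together with the centrality of $z_{m,m}$ in $\HH_q(\BS_{(m,m)})$, simply as multiplication by $z_{m,m}$, so, using the right $\HH_q(\BS_{(m,m)})$-linearity of $Z_{\lam}\otimes\widehat{Z}_{\mu}$, the claim reduces to showing that multiplication by $z_{m,m}$ scales $(Z_{\lam}\otimes\widehat{Z}_{\mu})(x_{\nu}\otimes\widehat{x}_{\xi})$ by $f_{(\lam,\mu)}(q)$. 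Unwinding the definitions $Z_{\lam}=y_{\lam'}T_{w_{\lam'}}\phi^1_{\lam,\omega_m}$, the analogous one for $\widehat{Z}_{\mu}$, and the identifications $\iota_m,\widehat{\iota}_m$, this element is a scalar multiple of $z'_{\lam}\otimes\widehat{z'_{\mu}}$ lying inside $z'_{\lam}\HH_q(\BS_m)\otimes\widehat{z'_{\mu}}\HH_q(\BS_{\{m+1,\cdots,n\}})$, so the first equality (once more together with centrality, so that the scalar propagates along the module) supplies exactly the factor $f_{(\lam,\mu)}(q)$, and we are done.

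The step I expect to be the main obstacle is the bookkeeping in the last paragraph: one must track carefully the non-unital identifications $\iota_m,\widehat{\iota}_m$ (whose images are the corners $\phi^1_{\omega_m,\omega_m}S_q(m)\phi^1_{\omega_m,\omega_m}$ and its counterpart), the translation $\widehat{\null}$ on the second tensor factor, and the precise relation between $Z_{\lam}$, $z'_{\lam}$ and the Schur functor, in order to confirm that $(Z_{\lam}\otimes\widehat{Z}_{\mu})(x_{\nu}\otimes\widehat{x}_{\xi})$ really lands where the first equality applies. One should be wary of the tempting shortcut of arguing that, since $Z_{m,m}$ is central in $S_q(m)\otimes\widehat{S}_q(m)$ and acts by the scalar $f_{(\lam,\mu)}(q)$ on the Schur-functor image $S_{\lam}\otimes\widehat{S}_{\mu}$ of the Weyl module $\Delta_{\lam}\otimes\widehat{\Delta}_{\mu}$, it must act by the same scalar on the whole Weyl module: a central element need not act by a scalar on an indecomposable module and the Schur functor is not faithful, so one genuinely needs the direct evaluation above (or a separate argument showing that $\Delta_{\beta}$ is generated by its Schur truncation). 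In any case the essential input is the first equality, i.e.\ \cite[Lemma 3.2]{Hu1}.
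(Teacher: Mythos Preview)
Your proposal is correct and follows the same path as the paper's own proof: the first equality is essentially \cite[Lemma 3.2]{Hu1} (the paper additionally notes that the dual Specht module can be realised as the right ideal generated by $x_{\lam'}^{\#}T_{w_{\lam'}}^{\#}y_{\lam}^{\#}$, which is the precise form in which that lemma is stated, but this is the same content you invoke), and the second equality is obtained from the first by unwinding the definitions of $Z_{\lam}$, $\widehat{Z}_{\mu}$ and $Z_{m,m}$. The paper compresses your entire second paragraph into the single sentence ``the second equality follows from the first equality and the definitions of $Z_{m,m}$, $Z_{\lam}$ and $\widehat{Z}_{\lam}$''; your more explicit evaluation on the summands $x_{(\nu,\xi)}\HH_q(\BS_n)$ is exactly what that sentence means in practice, and your caution about not shortcutting via ``central $\Rightarrow$ scalar on indecomposables'' is well placed even if the paper does not spell it out.
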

\begin{proof} Since the dual Specht module ${S}_{\lam}$ is also isomorphic to the right ideal of $\HH_q(\BS_m)$
generated by
$x_{\lam'}^{\#}T_{w_{\lam'}}^{\#}y_{\lam}^{\#}$, the first equality follows from \cite[Lemma 3.2]{Hu1}. The second
equality follows from the first equality and the definitions of $Z_{m,m}, Z_{\lam}$ and $\widehat{Z}_{\lam}$.
\end{proof}

To simplify notations, we shall denote $H(m)$ by $\theta$ and $S_q(m)\otimes \widehat{S}_q(m)$ by $S_q^{m,m}$. The set
of subspaces
$\{S_q^{m,m}, \theta S_q^{m,m}\}$ is a $\Z/2\Z$-Clifford system in $\widetilde{A}(m)$ in the sense of
\cite[(11.12)]{CR}. For any $\lam,\mu\in\Lambda^{+}(m)$, we set $$
\Delta_{\lam,\mu}:=\Delta_{\lam}\otimes\widehat{\Delta}_{\mu},\quad L_{\lam,\mu}:=L_{\lam}\otimes\widehat{L}_{\mu},
$$
where $\Delta_{\lam}, L_{\lam}$ (resp., $\widehat{\Delta}_{\mu},
\widehat{L}_{\mu}$) are the Weyl module, irreducible module of the
$q$-Schur algebra $S_q(m)$ (resp., $\widehat{S}_q(m)$) respectively.
By definition, $L_{\lam}$ (resp., $\widehat{L}_{\mu}$) is the unique
simple $S_q(m)$-head (resp., $\widehat{S}_q(m)$-head) of
$\Delta_{\lam}$ (resp., of $\widehat{\Delta}_{\mu}$). Hence
$L_{\lam,\mu}$ is the unique simple $S_q^{m,m}$-head of
$\Delta_{\lam,\mu}$.\smallskip

Note that by Lemma \ref{lm37}, the automorphism
\,\,\,$\widehat{\null}$\,\,\, of $S_q^{m,m}$ is induced by $\theta$. For any $S_q^{m,m}$-module
$M$, let $M^{\theta}$ be the new $S_q^{m,m}$-module obtained by
twisting the action of $S_q^{m,m}$ by \,\,\,$\widehat{\null}$\,\,\,.
We have the following result.

\begin{lemma} For any $\lam,\mu\in\Lambda^{+}(m)$, we have that $$
\bigl(\Delta_{\lam,\mu}\bigr)^{\theta}\cong \Delta_{\mu,\lam}, \quad \bigl(L_{\lam,\mu}\bigr)^{\theta}\cong L_{\mu,\lam}.
$$
\end{lemma}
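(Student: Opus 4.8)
The plan is to recognize the automorphism $\,\widehat{\null}\,$ of $S_q^{m,m}=S_q(m)\otimes\widehat{S}_q(m)$ as the one that interchanges the two tensor factors via the ``hat'' isomorphism. Write $\eta\colon S_q(m)\xrightarrow{\sim}\widehat{S}_q(m)$ for the isomorphism of $\End$-algebras induced by the isomorphism $\HH_q(\BS_m)\xrightarrow{\sim}\HH_q(\BS_{\{m+1,\cdots,n\}})$ coming from $\,\widehat{\null}\,$; by Lemma \ref{lm37} it sends $\phi_{\lam,\mu}^{d}$ to $\widehat{\phi}_{\lam,\mu}^{\widehat{d}}$, and its inverse is induced in the same way in the other direction. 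Since $\widehat{\widehat{\imath}}=\imath$, the automorphism $\,\widehat{\null}\,$ of $S_q^{m,m}$ is an involution, and evaluating on a product $(f\otimes 1)(1\otimes g)$ shows that
$$
\widehat{f\otimes g}=\eta^{-1}(g)\otimes\eta(f),\qquad f\in S_q(m),\ g\in\widehat{S}_q(m).
$$
Verifying this identity carefully (on the standard basis, using the uniqueness clause of Lemma \ref{lm37}) is the one step I would write out in full; everything afterwards is formal.

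Next I would identify the relevant restriction-of-scalars functors. The isomorphism $\eta$ carries $x_{\lam}$ to $\widehat{x}_{\lam}$, hence $Z_{\lam}=y_{\lam'}T_{w_{\lam'}}\phi_{\lam,\omega_m}^{1}$ to $\widehat{Z}_{\lam}$, and therefore maps the right ideal $Z_{\lam}S_q(m)=\Delta_{\lam}$ isomorphically onto $\widehat{Z}_{\lam}\widehat{S}_q(m)=\widehat{\Delta}_{\lam}$ in a way intertwining the $S_q(m)$-action with the $\widehat{S}_q(m)$-action through $\eta$; passing to heads it likewise identifies $L_{\lam}$ with $\widehat{L}_{\lam}$. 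In other words, the pullback of $\widehat{\Delta}_{\lam}$ (resp. $\widehat{L}_{\lam}$) along $\eta$ is $\Delta_{\lam}$ (resp. $L_{\lam}$), and symmetrically the pullback of $\Delta_{\lam}$ (resp. $L_{\lam}$) along $\eta^{-1}$ is $\widehat{\Delta}_{\lam}$ (resp. $\widehat{L}_{\lam}$).

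Combining the two steps gives the lemma. By the displayed formula, the twisted module $(\Delta_{\lam,\mu})^{\theta}=(\Delta_{\lam}\otimes\widehat{\Delta}_{\mu})^{\widehat{\null}}$ has underlying space $\Delta_{\lam}\otimes\widehat{\Delta}_{\mu}$ on which the second algebra factor $\widehat{S}_q(m)$ acts on the first slot through $\eta^{-1}$, and the first algebra factor $S_q(m)$ acts on the second slot through $\eta$; by the previous paragraph the first slot thereby becomes $\widehat{\Delta}_{\lam}$ and the second slot becomes $\Delta_{\mu}$, so after the canonical flip $\widehat{S}_q(m)\otimes S_q(m)\cong S_q(m)\otimes\widehat{S}_q(m)$ we get $(\Delta_{\lam,\mu})^{\theta}\cong\Delta_{\mu}\otimes\widehat{\Delta}_{\lam}=\Delta_{\mu,\lam}$. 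The statement for $L_{\lam,\mu}$ follows by the identical computation, or more quickly from the previous one: twisting by $\theta$ is an exact self-equivalence of the category of $S_q^{m,m}$-modules, hence sends the unique simple head of a module to the unique simple head of its twist; since $L_{\lam,\mu}$ is the head of $\Delta_{\lam,\mu}$, the module $(L_{\lam,\mu})^{\theta}$ is the head of $(\Delta_{\lam,\mu})^{\theta}\cong\Delta_{\mu,\lam}$, namely $L_{\mu,\lam}$. The only genuine care is required in the first paragraph; there is no substantive obstacle beyond that bookkeeping.
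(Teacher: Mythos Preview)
Your proposal is correct and takes essentially the same approach as the paper, which simply records that the lemma ``follows directly from Lemma~\ref{lm37}''. You have unpacked that one-line proof in detail: identifying the automorphism $\,\widehat{\null}\,$ on $S_q^{m,m}$ as the tensor-swap via $\eta$, checking that $\eta$ carries $Z_{\lam}\mapsto\widehat{Z}_{\lam}$ and hence $\Delta_{\lam}\mapsto\widehat{\Delta}_{\lam}$, and then reading off the twist---this is exactly the content the paper leaves implicit.
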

\begin{proof} This follows directly from Lemma \ref{lm37}.
\end{proof}

\begin{definition} For any $\lam,\mu\in\Lambda^{+}(m)$, we set $$
\widetilde{\Delta}_{\lam,\mu}:=\Delta_{\lam,\mu}\!\uparrow\!_{S_q^{m,m}}^{\widetilde{A}(m)},\quad
\widetilde{L}_{\lam,\mu}:=L_{\lam,\mu}\!\uparrow\!_{S_q^{m,m}}^{\widetilde{A}(m)}.
$$
\end{definition}

Let $\widetilde{\sigma}$ be the automorphism of $\widetilde{A}(m)$
which is defined on generators by
$$ {\theta}^jx\mapsto (-1)^j{\theta}^j x,\,\,\forall\,x\in S_q^{m,m},\, j\in\mathbb{Z}.
$$
Clearly, $\widetilde{\sigma}\!\downarrow_{S_q^{m,m}}=\id$. By Lemma
\ref{lm36}, we can apply \cite[(2.2)]{Ge2} and
\cite[Appendix]{Hu6}. That is, as
$\widetilde{A}(m)$-$\widetilde{A}(m)$-bimodule,
\addtocounter{equation}{5}
\begin{equation}\label{equa3}
\widetilde{A}(m)\otimes_{S_q^{m,m}}\widetilde{A}(m)\cong
\widetilde{A}(m)\oplus
\bigl(\widetilde{A}(m)\bigr)^{\widetilde{\sigma}},
\end{equation}
where the left $\widetilde{A}(m)$-module structure on
$\bigl(\widetilde{A}(m)\bigr)^{\widetilde{\sigma}}$ was just given
by left multiplication, while the right $\widetilde{A}(m)$-module
structure on $\bigl(\widetilde{A}(m)\bigr)^{\widetilde{\sigma}}$ was
given by right multiplication twisted by
$\widetilde{\sigma}$.

\addtocounter{theorem}{1}
\begin{lemma} Let $\lam,\mu\in\Lambda^{+}(m)$. We have that \begin{enumerate}
\item if $\lam\neq\mu$, then $\widetilde{L}_{\lam,\mu}\cong\widetilde{L}_{\mu,\lam}$ is
a simple $\widetilde{A}(m)$-module;
\item there is a direct sum decomposition of $\widetilde{A}(m)$-module: $\widetilde{L}_{\lam,\lam}=
\widetilde{L}^{+}_{\lam,\lam}\oplus\widetilde{L}^{-}_{\lam,\lam}$,
where $$\begin{aligned}
\widetilde{L}^{+}_{\lam,\lam}:&=\bigl(\sqrt{f_{(\lam,\lam)}(q)}\overline{Z}_{\lam}\otimes\widehat{\overline{Z}}_{\lam}+
\overline{Z}_{\lam}\otimes\widehat{\overline{Z}}_{\lam}\theta\bigr)S_q^{m,m},\\
\widetilde{L}^{-}_{\lam,\lam}:&=\bigl(\sqrt{f_{(\lam,\lam)}(q)}\overline{Z}_{\lam}\otimes\widehat{\overline{Z}}_{\lam}-
\overline{Z}_{\lam}\otimes\widehat{\overline{Z}}_{\lam}\theta\bigr)S_q^{m,m},
\end{aligned}
$$
where $\overline{Z}_{\lam}$ (resp., $\widehat{\overline{Z}}_{\lam}$) is the natural image of $Z_{\lam}$ (resp., of
$\widehat{Z}_{\lam}$) in
$L_{\lam}$ (resp., $\widehat{L}_{\lam}$);
\item $\widetilde{A}(m)$ is split over $K$, and the set $$
\Bigl\{\widetilde{L}_{\lam,\mu},\widetilde{L}^{+}_{\beta,\beta},\widetilde{L}^{-}_{\beta,\beta}\Bigm|\lam,\mu,
\beta\vdash m, \lam\neq\mu, (\lam,\mu)\in\PP_n/\!\sim\Bigr\}
$$
forms a complete set of pairwise non-isomorphic absolutely simple $\widetilde{A}(m)$-modules;
\item $\bigl(\widetilde{L}_{\lam,\mu}\bigr)^{\widetilde{\sigma}}\cong\widetilde{L}_{\lam,\mu}\cong
\widetilde{L}_{\mu,\lam}$,
$\bigl(\widetilde{L}^{+}_{\beta,\beta}\bigr)^{\widetilde{\sigma}}\cong\widetilde{L}^{-}_{\beta,\beta}$.
\end{enumerate}
\end{lemma}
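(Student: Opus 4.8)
The plan is to exploit the $\Z/2\Z$-Clifford system structure $\{S_q^{m,m},\theta S_q^{m,m}\}$ of $\widetilde{A}(m)$ together with the bimodule decomposition (\ref{equa3}). First I would recall the general Clifford theory of induced modules: for the induction functor $\uparrow_{S_q^{m,m}}^{\widetilde{A}(m)}$, the isomorphism (\ref{equa3}) gives, by Frobenius reciprocity, that for any simple $S_q^{m,m}$-module $N$ one has $\widetilde{N}\!\downarrow_{S_q^{m,m}}\cong N\oplus N^{\theta}$, and $\operatorname{Hom}_{\widetilde{A}(m)}(\widetilde{N},\widetilde{N})\cong\operatorname{Hom}_{S_q^{m,m}}(N,N\oplus N^{\theta})$. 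Applying this with $N=L_{\lam,\mu}$ and using $(L_{\lam,\mu})^{\theta}\cong L_{\mu,\lam}$ from the preceding lemma, the case split $\lam\neq\mu$ versus $\lam=\mu$ appears: when $\lam\neq\mu$, $L_{\lam,\mu}\not\cong L_{\mu,\lam}$ (since $L_\lam,\widehat L_\mu$ are pairwise distinct irreducibles indexed by partitions), so $\operatorname{End}_{\widetilde{A}(m)}(\widetilde{L}_{\lam,\mu})$ is one-dimensional over $K$ and $\widetilde{L}_{\lam,\mu}$ is absolutely simple; moreover $\widetilde{L}_{\lam,\mu}\cong\widetilde{L}_{\mu,\lam}$ because they restrict to the same $S_q^{m,m}$-module and induction of a simple from an index-two subsystem in the ``non-stable'' case produces a unique simple. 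This proves (1).

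For (2), when $\lam=\mu$ we have $(L_{\lam,\lam})^{\theta}\cong L_{\lam,\lam}$, so $\operatorname{End}_{\widetilde{A}(m)}(\widetilde{L}_{\lam,\lam})\cong K\oplus K$ as a $K$-space; here I would use the explicit generator $\overline Z_\lam\otimes\widehat{\overline Z}_\lam$ of $L_{\lam,\lam}$ and the element $\theta$ with $\theta^2=Z_{m,m}$ acting on $\overline Z_\lam\otimes\widehat{\overline Z}_\lam$ by the scalar $f_{(\lam,\lam)}(q)$ (the image under $S_q^{m,m}$ of the second displayed identity in the lemma before Definition of $\widetilde\Delta$). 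Since $\cha K\neq2$ and $\sqrt{f_{(\lam,\lam)}(q)}$ has been fixed with $(\sqrt{f_{(\lam,\lam)}(q)})^2=f_{(\lam,\lam)}(q)$ and $f_{(\lam,\lam)}(q)$ invertible by (2.3.1), the two elements $\sqrt{f_{(\lam,\lam)}(q)}\overline Z_\lam\otimes\widehat{\overline Z}_\lam\pm\overline Z_\lam\otimes\widehat{\overline Z}_\lam\theta$ are orthogonal idempotent-type generators up to scalar, giving the direct sum decomposition $\widetilde{L}_{\lam,\lam}=\widetilde{L}^{+}_{\lam,\lam}\oplus\widetilde{L}^{-}_{\lam,\lam}$ into two $\widetilde{A}(m)$-submodules; each summand restricts irreducibly to $S_q^{m,m}$ (isomorphic to $L_{\lam,\lam}$), hence is simple, and they are non-isomorphic since $\theta$ acts on them by $+\sqrt{f_{(\lam,\lam)}(q)}$ and $-\sqrt{f_{(\lam,\lam)}(q)}$ respectively. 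Splitness in (3) follows because all endomorphism rings computed above are split over $K$ ($q$-Schur algebras being split, and $\cha K\neq2$ making the two scalars lie in $K$); that the listed set is a complete irredundant list of simples follows from the standard Clifford-theoretic bijection between simple $\widetilde{A}(m)$-modules and $\langle\widehat{\null}\,\rangle$-orbits of simple $S_q^{m,m}$-modules (with the stable orbits $\{L_{\beta,\beta}\}$ contributing two each), combined with the classification of simple $S_q^{m,m}$-modules as $L_\lam\otimes\widehat L_\mu$. Finally (4) is read off: $\widetilde\sigma$ fixes $S_q^{m,m}$ pointwise and sends $\theta\mapsto-\theta$, so twisting by $\widetilde\sigma$ negates the eigenvalue of $\theta$, hence swaps $\widetilde{L}^{+}_{\beta,\beta}$ and $\widetilde{L}^{-}_{\beta,\beta}$ and fixes the isomorphism class of $\widetilde{L}_{\lam,\mu}$; the isomorphism $\widetilde{L}_{\lam,\mu}\cong\widetilde{L}_{\mu,\lam}$ is (1).

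I expect the main obstacle to be item (3), specifically verifying that the list of modules is \emph{complete} and \emph{non-redundant}: this requires the full statement that simple modules over a crossed-product-type algebra $\widetilde{A}(m)=S_q^{m,m}\oplus\theta S_q^{m,m}$ are parametrized by $\widehat{\null}$-orbits of simples of $S_q^{m,m}$ with appropriate multiplicity, which one gets from the Clifford theory in \cite{CR} applied to the Clifford system—but one must check the cocycle is trivial (guaranteed by $\theta^2=Z_{m,m}$ being central and invertible in $S_q^{m,m}$, i.e.\ Lemma \ref{lm36}) so that no Schur-multiplier twist intervenes, and that for stable orbits the induced module genuinely splits into exactly two pieces rather than remaining simple (which is where $\cha K\neq2$ and the existence of $\sqrt{f_{(\lam,\lam)}(q)}$ in $K$ are essential). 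The computations for (1), (2) and (4) themselves are short once the eigenvalue-of-$\theta$ bookkeeping is set up.
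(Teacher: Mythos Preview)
Your proposal is correct and uses the same ingredients as the paper: the $\Z/2\Z$-Clifford system $\{S_q^{m,m},\theta S_q^{m,m}\}$, the bimodule decomposition (\ref{equa3}), Frobenius reciprocity, and the eigenvalue action of $\theta$ via $\theta^2=Z_{m,m}$. The only real difference is the logical order. The paper proves (4) \emph{first}, by writing down the isomorphisms explicitly: for $(\widetilde L_{\lam,\mu})^{\widetilde\sigma}\cong\widetilde L_{\lam,\mu}$ it sends
\[
(\overline Z_\lam\otimes\widehat{\overline Z}_\mu)h_1+(\overline Z_\lam\otimes\widehat{\overline Z}_\mu)h_2\theta\ \longmapsto\ -f_{(\lam,\mu)}(q)(\overline Z_\lam\otimes\widehat{\overline Z}_\mu)h_2+(\overline Z_\lam\otimes\widehat{\overline Z}_\mu)h_1\theta,
\]
and for $(\widetilde L^{+}_{\lam,\lam})^{\widetilde\sigma}\cong\widetilde L^{-}_{\lam,\lam}$ it simply flips the sign in the generator; it then declares that (1)--(3) follow from (4), (\ref{equa3}) and Frobenius reciprocity. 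You instead establish (1)--(3) directly from the Clifford-theory classification and deduce (4) last from the effect of $\widetilde\sigma$ on $\theta$. Both orderings are short and equivalent; the paper's buys concrete maps one can cite later, while yours makes the classification step in (3) more visibly grounded.
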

\begin{proof} We only give the proof of (4), as the other claims follow
from (4), (\ref{equa3}) and Frobenius reciprocity.

In fact, one can check that the following map gives the isomorphism
$\bigl(\widetilde{L}_{\lam,\mu}\bigr)^{\widetilde{\sigma}}\cong\widetilde{L}_{\lam,\mu}$: for any $h_1,
h_2\in S_q^{m,m}$,
$$\bigl(\overline{Z}_{\lam}\otimes\widehat{\overline{Z}}_{\mu}\bigr)h_1+
\bigl(\overline{Z}_{\lam}\otimes\widehat{\overline{Z}}_{\mu}\bigr)h_2\theta
\mapsto
-f_{(\lam,\mu)}(q)\bigl(\overline{Z}_{\lam}\otimes\widehat{\overline{Z}}_{\mu}\bigr)h_2+
\bigl(\overline{Z}_{\lam}\otimes\widehat{\overline{Z}}_{\mu}\bigr)h_1\theta;
$$
while the following map gives the isomorphism
$\bigl(\widetilde{L}^{+}_{\lam,\lam}\bigr)^{\widetilde{\sigma}}\cong\widetilde{L}^{-}_{\lam,\lam}$:
for any $h\in S_q^{m,m}$,
$$\bigl(\sqrt{f_{(\lam,\lam)}(q)}\overline{Z}_{\lam}\otimes\widehat{\overline{Z}}_{\lam}+
\overline{Z}_{\lam}\otimes\widehat{\overline{Z}}_{\lam}\theta\bigr)h
\mapsto\bigl(\sqrt{f_{(\lam,\lam)}(q)}\overline{Z}_{\lam}\otimes\widehat{\overline{Z}}_{\lam}-
\overline{Z}_{\lam}\otimes\widehat{\overline{Z}}_{\lam}\theta\bigr)h.
$$
\end{proof}

There is also a direct sum decomposition of $\widetilde{A}(m)$-module: $\widetilde{\Delta}_{\lam,\lam}=
\widetilde{\Delta}^{+}_{\lam,\lam}\oplus\widetilde{\Delta}^{-}_{\lam,\lam}$,
where $$\begin{aligned}
\widetilde{\Delta}^{+}_{\lam,\lam}:&=\bigl(\sqrt{f_{(\lam,\lam)}(q)}{Z}_{\lam}\otimes{\widehat{Z}_{\lam}}+
{Z}_{\lam}\otimes{\widehat{Z}_{\lam}}\theta\bigr)S_q^{m,m},\\
\widetilde{\Delta}^{-}_{\lam,\lam}:&=\bigl(\sqrt{f_{(\lam,\lam)}(q)}{Z}_{\lam}\otimes{\widehat{Z}_{\lam}}-
{Z}_{\lam}\otimes{\widehat{Z}_{\lam}}\theta\bigr)S_q^{m,m}.
\end{aligned}
$$

\begin{lemma} With the same notations as above, we have \begin{enumerate}
\item if $\lam\neq\mu$, then $\widetilde{L}_{\lam,\mu}$ is the unique simple $\widetilde{A}(m)$-head of
$\widetilde{\Delta}_{\lam,\mu}$;
\item $\widetilde{L}^{+}_{\lam,\lam}$ (resp., $\widetilde{L}^{-}_{\lam,\lam}$) is the unique simple
$\widetilde{A}(m)$-head of
$\widetilde{\Delta}^{+}_{\lam,\lam}$ (resp., of $\widetilde{\Delta}^{-}_{\lam,\lam}$);
\item $\bigl(\widetilde{\Delta}_{\lam,\mu}\bigr)^{\widetilde{\sigma}}\cong\widetilde{\Delta}_{\lam,\mu}\cong
\widetilde{\Delta}_{\mu,\lam}$,
$\bigl(\widetilde{\Delta}^{+}_{\lam,\lam}\bigr)^{\widetilde{\sigma}}\cong\widetilde{\Delta}^{-}_{\lam,\lam}$.
\end{enumerate}
\end{lemma}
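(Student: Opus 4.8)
The plan is to run these three assertions through the same machinery used for the preceding lemma. Write $\uparrow\;=\;\uparrow_{S_q^{m,m}}^{\widetilde{A}(m)}$ and $\downarrow\;=\;\downarrow_{S_q^{m,m}}$. Because $\theta$ is invertible in $\widetilde{A}(m)$, the Clifford-system decomposition $\widetilde{A}(m)=S_q^{m,m}\oplus\theta S_q^{m,m}$ makes $\widetilde{A}(m)$ a free left $S_q^{m,m}$-module of rank two; hence $\uparrow$ is exact, Frobenius reciprocity holds between $\uparrow$ and $\downarrow$, and one has the Mackey-type isomorphism $(M\!\uparrow)\!\downarrow\cong M\oplus M^{\theta}$ for every right $S_q^{m,m}$-module $M$ (the twist being the one attached to the automorphism of Lemma~\ref{lm37}). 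I would also use: the $S_q^{m,m}$-head of $\Delta_{\lam,\mu}$ is $L_{\lam,\mu}$; the isomorphisms $(\Delta_{\lam,\mu})^{\theta}\cong\Delta_{\mu,\lam}$ and $(L_{\lam,\mu})^{\theta}\cong L_{\mu,\lam}$; the classification of the simple $\widetilde{A}(m)$-modules from the preceding lemma; and the bimodule decomposition (\ref{equa3}). For (1), applying the exact functor $\uparrow$ to $\Delta_{\lam,\mu}\twoheadrightarrow L_{\lam,\mu}$ produces a surjection $\widetilde{\Delta}_{\lam,\mu}\twoheadrightarrow\widetilde{L}_{\lam,\mu}$ onto a simple module. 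By the Mackey formula, $\widetilde{L}_{\nu,\kappa}\!\downarrow\cong L_{\nu,\kappa}\oplus L_{\kappa,\nu}$ for $\nu\neq\kappa$, while $\widetilde{L}^{\pm}_{\beta,\beta}\!\downarrow\cong L_{\beta,\beta}$ by a dimension count. Frobenius reciprocity then shows that a simple $\widetilde{A}(m)$-module $V$ satisfies $\widetilde{\Delta}_{\lam,\mu}\twoheadrightarrow V$ iff $L_{\lam,\mu}$ occurs in the head of $V\!\downarrow$; inspecting the above list, this forces $V\cong\widetilde{L}_{\lam,\mu}$, and $\Hom_{\widetilde{A}(m)}(\widetilde{\Delta}_{\lam,\mu},\widetilde{L}_{\lam,\mu})\cong\Hom_{S_q^{m,m}}(\Delta_{\lam,\mu},L_{\lam,\mu}\oplus L_{\mu,\lam})\cong K$ (the summand $L_{\mu,\lam}$ contributing $0$, since $L_{\mu,\lam}$ is not the head of $\Delta_{\lam,\mu}$), giving multiplicity one.

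For (2), the same Frobenius-reciprocity count, now with $\widetilde{L}^{\pm}_{\lam,\lam}\!\downarrow\cong L_{\lam,\lam}$, shows that the head of $\widetilde{\Delta}_{\lam,\lam}$ is $\widetilde{L}^{+}_{\lam,\lam}\oplus\widetilde{L}^{-}_{\lam,\lam}$, each summand with multiplicity one. Put $\xi^{\pm}:=\sqrt{f_{(\lam,\lam)}(q)}\,Z_{\lam}\otimes\widehat{Z}_{\lam}\pm Z_{\lam}\otimes\widehat{Z}_{\lam}\theta$, so $\widetilde{\Delta}^{\pm}_{\lam,\lam}=\xi^{\pm}S_q^{m,m}$; both are nonzero because the component of $\xi^{\pm}$ in the even part $S_q^{m,m}$ of $\widetilde{A}(m)$ is $\sqrt{f_{(\lam,\lam)}(q)}\,Z_{\lam}\otimes\widehat{Z}_{\lam}\neq 0$. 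Since $\widetilde{\Delta}^{+}_{\lam,\lam}$ and $\widetilde{\Delta}^{-}_{\lam,\lam}$ are nonzero direct summands of $\widetilde{\Delta}_{\lam,\lam}$, Krull--Schmidt (using $\widetilde{\Delta}_{\lam,\lam}\!\downarrow\cong\Delta_{\lam,\lam}\oplus\Delta_{\lam,\lam}$ with $\Delta_{\lam,\lam}$ indecomposable over $S_q^{m,m}$) forces $\widetilde{\Delta}^{\pm}_{\lam,\lam}\!\downarrow\cong\Delta_{\lam,\lam}$; in particular each $\widetilde{\Delta}^{\pm}_{\lam,\lam}$ has a single simple head, and $\{\text{head of }\widetilde{\Delta}^{+}_{\lam,\lam},\text{head of }\widetilde{\Delta}^{-}_{\lam,\lam}\}=\{\widetilde{L}^{+}_{\lam,\lam},\widetilde{L}^{-}_{\lam,\lam}\}$. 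To match them up, chase $\xi^{+}$ through the surjection $\widetilde{\Delta}_{\lam,\lam}\twoheadrightarrow\widetilde{L}_{\lam,\lam}$ induced from $\Delta_{\lam,\lam}\twoheadrightarrow L_{\lam,\lam}$: it maps to $\sqrt{f_{(\lam,\lam)}(q)}\,\overline{Z}_{\lam}\otimes\widehat{\overline{Z}}_{\lam}+\overline{Z}_{\lam}\otimes\widehat{\overline{Z}}_{\lam}\theta$, a generator of $\widetilde{L}^{+}_{\lam,\lam}$, so $\widetilde{\Delta}^{+}_{\lam,\lam}\twoheadrightarrow\widetilde{L}^{+}_{\lam,\lam}$; hence the head of $\widetilde{\Delta}^{+}_{\lam,\lam}$ is $\widetilde{L}^{+}_{\lam,\lam}$ and (by elimination, or by applying $\widetilde{\sigma}$) the head of $\widetilde{\Delta}^{-}_{\lam,\lam}$ is $\widetilde{L}^{-}_{\lam,\lam}$, each with multiplicity one as in (1).

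For (3), first $\widetilde{\Delta}_{\lam,\mu}\cong\widetilde{\Delta}_{\mu,\lam}$: since $\Delta_{\mu,\lam}\cong(\Delta_{\lam,\mu})^{\theta}$ and $\theta$ is a unit of $\widetilde{A}(m)$, the map $m\otimes a\mapsto m\otimes\theta a$ is an isomorphism $(\Delta_{\lam,\mu})^{\theta}\!\uparrow\;\xrightarrow{\sim}\;\Delta_{\lam,\mu}\!\uparrow$. Next $(\widetilde{\Delta}_{\lam,\mu})^{\widetilde{\sigma}}\cong\widetilde{\Delta}_{\lam,\mu}$: tensoring with (\ref{equa3}) yields $\widetilde{\Delta}_{\lam,\mu}\!\downarrow\!\uparrow\cong\widetilde{\Delta}_{\lam,\mu}\oplus(\widetilde{\Delta}_{\lam,\mu})^{\widetilde{\sigma}}$, whereas the Mackey decomposition $\widetilde{\Delta}_{\lam,\mu}\!\downarrow\cong\Delta_{\lam,\mu}\oplus\Delta_{\mu,\lam}$ together with the isomorphism just proved gives $\widetilde{\Delta}_{\lam,\mu}\!\downarrow\!\uparrow\cong\widetilde{\Delta}_{\lam,\mu}\oplus\widetilde{\Delta}_{\mu,\lam}\cong\widetilde{\Delta}_{\lam,\mu}\oplus\widetilde{\Delta}_{\lam,\mu}$; as $\widetilde{\Delta}_{\lam,\mu}$ is indecomposable by (1), Krull--Schmidt cancellation gives the claim. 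Finally $(\widetilde{\Delta}^{+}_{\lam,\lam})^{\widetilde{\sigma}}\cong\widetilde{\Delta}^{-}_{\lam,\lam}$, proved exactly as the isomorphism $(\widetilde{L}^{+}_{\beta,\beta})^{\widetilde{\sigma}}\cong\widetilde{L}^{-}_{\beta,\beta}$ in the preceding lemma: the assignment $\xi^{+}h\mapsto\xi^{-}h$ ($h\in S_q^{m,m}$) is well defined, because the right ideal $(Z_{\lam}\otimes\widehat{Z}_{\lam})S_q^{m,m}$ is stable under the automorphism $\widehat{\null}$, and is $\widetilde{A}(m)$-equivariant for the $\widetilde{\sigma}$-twisted right action on $\widetilde{\Delta}^{-}_{\lam,\lam}$, the verification resting on $\theta^{2}=Z_{m,m}$ (Lemma~\ref{lm36}) and $(Z_{\lam}\otimes\widehat{Z}_{\lam})Z_{m,m}=f_{(\lam,\lam)}(q)(Z_{\lam}\otimes\widehat{Z}_{\lam})$, which give $\xi^{\pm}\theta=\pm\sqrt{f_{(\lam,\lam)}(q)}\,\xi^{\pm}$.

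Once the Clifford-system toolkit (exactness of $\uparrow$, Frobenius reciprocity, the Mackey formula, and (\ref{equa3})) is in place, all three statements are essentially formal. The step I expect to be the main obstacle is the bookkeeping that decides the $+/-$ labels in (2) and (3): checking that it is $\widetilde{\Delta}^{+}_{\lam,\lam}$ and not $\widetilde{\Delta}^{-}_{\lam,\lam}$ that surjects onto $\widetilde{L}^{+}_{\lam,\lam}$, and that the $\widetilde{\sigma}$-twist interchanges $\widetilde{\Delta}^{+}_{\lam,\lam}$ and $\widetilde{\Delta}^{-}_{\lam,\lam}$. This forces one down to the explicit generators $\xi^{\pm}$ and the scalar identities $\xi^{\pm}\theta=\pm\sqrt{f_{(\lam,\lam)}(q)}\,\xi^{\pm}$, and also requires the check (via the $\widehat{\null}$-stability of $(Z_{\lam}\otimes\widehat{Z}_{\lam})S_q^{m,m}$, or via the Krull--Schmidt argument above) that $\widetilde{\Delta}^{\pm}_{\lam,\lam}$ restricts to $\Delta_{\lam,\lam}$ over $S_q^{m,m}$, so that each is exactly half of $\widetilde{\Delta}_{\lam,\lam}$; these are routine but the most error-prone part of the argument.
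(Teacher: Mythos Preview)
Your argument is correct and follows the same line the paper intends: the paper's proof is the single sentence ``These follow from Frobenius reciprocity,'' and your write-up simply unpacks that, together with the Clifford-system ingredients already assembled (Lemma~\ref{lm36}, Lemma~\ref{lm37}, the Mackey decomposition, and (\ref{equa3})). The extra Krull--Schmidt step and the explicit generator chase with $\xi^{\pm}$ that you use to pin down the $+/-$ labels in (2) and (3) are exactly the details one must supply to turn the paper's one-line proof into a complete argument, and they are carried out correctly (in particular, $\End_{S_q^{m,m}}(\Delta_{\lam,\lam})\cong K$ guarantees the indecomposability you need, and $\sqrt{f_{(\lam,\lam)}(q)}\neq 0$ by (2.3.1)).
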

\begin{proof} These follow from Frobenius reciprocity.
\end{proof}

\begin{corollary} \label{cor317} Let $\lam$ be a partition in $\Lambda^{+}(m)$. Let $\theta$ acts as the scalar
$\sqrt{f_{(\lam,\lam)}(q)}$ (resp., $-\sqrt{f_{(\lam,\lam)}(q)}$)
on the highest weight vector of $\Delta_{\lam,\lam}$. Then this action can be uniquely extends to a representation of
$\widetilde{A}(m)$ on $\Delta_{\lam,\lam}$. The resulting $\widetilde{A}(m)$-module is isomorphic to
$\Delta^{+}_{\lam,\lam}$ (resp., $\Delta^{-}_{\lam,\lam}$).
The same statements hold for $L^{+}_{\lam,\lam}$ and $L^{-}_{\lam,\lam}$.
\end{corollary}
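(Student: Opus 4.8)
The plan is to reduce everything to the structure already established for $\widetilde{A}(m)$ as a $\Z/2\Z$-graded (Clifford system) extension of $S_q^{m,m}$. First I would recall that $\widetilde{A}(m)$ is generated over $S_q^{m,m}$ by the single element $\theta=H(m)$ with $\theta^2=Z_{m,m}$ central and invertible in $S_q^{m,m}$, and that $\theta$ induces the automorphism $\widehat{\null}$ on $S_q^{m,m}$. Hence for any $S_q^{m,m}$-module $V$ and any square root $c$ of the scalar by which $Z_{m,m}$ would have to act consistently, specifying $\theta v := c\,(\text{action via the }\widehat{\null}\text{-twist})$ on $V$ extends the $S_q^{m,m}$-action to $\widetilde{A}(m)$ precisely when the twisting is compatible, i.e. when $V^{\theta}\cong V$ as $S_q^{m,m}$-modules and the chosen isomorphism is used to define the $\theta$-action. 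For $V=\Delta_{\lam,\lam}$ we have $V^{\theta}\cong\Delta_{\lam,\lam}$ by the preceding lemma, so there is a one-dimensional space of ``self-twistings'' up to scalar, and fixing the value of $\theta$ on the highest weight vector pins down the extension uniquely because $\Delta_{\lam,\lam}$ is generated as an $S_q^{m,m}$-module by that vector.

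Next I would compute the constraint on the scalar. The highest weight vector of $\Delta_{\lam,\lam}$ may be taken to be $Z_\lam\otimes\widehat{Z}_\lam$; by the lemma relating $(Z_\lam\otimes\widehat{Z}_\mu)Z_{m,m}$ to $f_{(\lam,\mu)}(q)$ we have $(Z_\lam\otimes\widehat{Z}_\lam)Z_{m,m}=f_{(\lam,\lam)}(q)(Z_\lam\otimes\widehat{Z}_\lam)$. Since $\theta^2=Z_{m,m}$, any extension must satisfy $\theta^2\cdot(Z_\lam\otimes\widehat{Z}_\lam)=f_{(\lam,\lam)}(q)(Z_\lam\otimes\widehat{Z}_\lam)$, so the scalar $\theta$ acts by on the highest weight vector must be a square root of $f_{(\lam,\lam)}(q)$, namely $\pm\sqrt{f_{(\lam,\lam)}(q)}$, which is exactly the choice offered in the statement. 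Conversely, for either sign, the formulae defining $\widetilde{\Delta}^{\pm}_{\lam,\lam}$ as $\bigl(\sqrt{f_{(\lam,\lam)}(q)}Z_\lam\otimes\widehat{Z}_\lam\pm Z_\lam\otimes\widehat{Z}_\lam\theta\bigr)S_q^{m,m}$ inside $\widetilde{\Delta}_{\lam,\lam}=\Delta_{\lam,\lam}\!\uparrow^{\widetilde{A}(m)}_{S_q^{m,m}}$ already exhibit $\widetilde{A}(m)$-modules whose underlying $S_q^{m,m}$-module is $\Delta_{\lam,\lam}$ (the map $x\mapsto(\sqrt{f_{(\lam,\lam)}(q)}Z_\lam\otimes\widehat{Z}_\lam\pm Z_\lam\otimes\widehat{Z}_\lam\theta)x$ is an $S_q^{m,m}$-isomorphism from $\Delta_{\lam,\lam}$ onto $\widetilde{\Delta}^{\pm}_{\lam,\lam}$, as one checks using $\theta^2=Z_{m,m}$ and the centrality of $Z_{m,m}$), and on this copy $\theta$ visibly acts on the image of the highest weight vector by $\pm\sqrt{f_{(\lam,\lam)}(q)}$. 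Matching the sign identifies the representation built by the corollary's recipe with $\widetilde{\Delta}^{+}_{\lam,\lam}$ or $\widetilde{\Delta}^{-}_{\lam,\lam}$ respectively.

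Then I would repeat the identical argument with $L_{\lam,\lam}$ in place of $\Delta_{\lam,\lam}$ and $\overline{Z}_\lam\otimes\widehat{\overline{Z}}_\lam$ in place of $Z_\lam\otimes\widehat{Z}_\lam$: the highest weight vector still generates $L_{\lam,\lam}$ over $S_q^{m,m}$, $Z_{m,m}$ still acts on it by $f_{(\lam,\lam)}(q)$ (pass to the quotient in the previous lemma, since $Z_{m,m}$ is central in $S_q^{m,m}$ and $L_{\lam,\lam}$ is a quotient of $\Delta_{\lam,\lam}$), so $\theta$ must act by $\pm\sqrt{f_{(\lam,\lam)}(q)}$, and the explicit modules $\widetilde{L}^{\pm}_{\lam,\lam}$ from the earlier lemma are exactly the two resulting extensions. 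The uniqueness of the extension in each case comes down to the fact that once $\theta$ is fixed on a generating vector, its action on all of $\Delta_{\lam,\lam}$ (resp. $L_{\lam,\lam}$) is forced by the relation $\theta x=\widehat{x}\,\theta$ for $x\in S_q^{m,m}$. The only place requiring a little care — and hence the main obstacle — is verifying that the sign ambiguity in $\sqrt{f_{(\lam,\lam)}(q)}$ is the \emph{only} ambiguity: a priori one might worry the self-twist isomorphism $\Delta_{\lam,\lam}^{\theta}\cong\Delta_{\lam,\lam}$ could be scaled, but since $\operatorname{End}_{S_q^{m,m}}(\Delta_{\lam,\lam})$ need not be one-dimensional for a Weyl module, the cleanest route is to argue directly that fixing $\theta$ on the \emph{highest weight line} (a one-dimensional weight space) leaves no freedom, because $S_q^{m,m}\cdot(\text{highest weight vector})$ is all of $\Delta_{\lam,\lam}$ and the $\widetilde{A}(m)$-relations then determine everything; I would spell that last point out explicitly rather than invoke a Schur-type lemma.
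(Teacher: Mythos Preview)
Your proof is correct and is essentially the argument the paper has in mind: the corollary is stated without proof in the paper, as an immediate consequence of the explicit formulae for $\widetilde{\Delta}^{\pm}_{\lam,\lam}$ (and $\widetilde{L}^{\pm}_{\lam,\lam}$) together with Lemmas~3.6, 3.7, 3.11, 3.12, and the direct-sum decompositions preceding Lemma~3.16. Your computation $w^{\pm}\theta=\pm\sqrt{f_{(\lam,\lam)}(q)}\,w^{\pm}$ (where $w^{\pm}=\sqrt{f_{(\lam,\lam)}(q)}\,Z_\lam\otimes\widehat{Z}_\lam\pm(Z_\lam\otimes\widehat{Z}_\lam)\theta$) and the projection onto the first summand of $\widetilde{\Delta}_{\lam,\lam}$ make explicit exactly what the paper leaves tacit.

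One small remark on your final caveat: your worry that $\End_{S_q^{m,m}}(\Delta_{\lam,\lam})$ might not be one-dimensional is unfounded. Since $S_q(m)$ is quasi-hereditary, $\End_{S_q(m)}(\Delta_\lam)=K$, and hence $\End_{S_q^{m,m}}(\Delta_{\lam,\lam})\cong\End_{S_q(m)}(\Delta_\lam)\otimes\End_{\widehat{S}_q(m)}(\widehat{\Delta}_\lam)=K$. So in fact a Schur-type argument would also work here. That said, your direct argument via cyclic generation by the highest weight vector is perfectly fine and arguably cleaner, since it applies verbatim to $L_{\lam,\lam}$ as well.
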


Note that the set $\{\phi_{\lam,\lam}^{1}\}_{\lam\in\Lambda(m)}$
is a set of pairwise orthogonal idempotents in $S_q(m)$, and
$\sum_{\lam\in\Lambda(m)}\phi_{\lam,\lam}^{1}=1$. For any right
$S_q(m)$-module $M$, it is clear that
$$ M=\bigoplus_{\lam\in\Lambda(m)}M\phi_{\lam,\lam}^{1}.
$$
For each $\lam\in\Lambda(m)$, we define
$M_{\lam}:=M\phi_{\lam,\lam}^{1}$, and we call $M_{\lam}$ the
$\lam$-weight space of $S_q(m)$-module $M$. Note also that $S_q(m)$
is an epimorphic image of the quantum algebra
$U_{K}(\mathfrak{gl}_m)$ associated to $\mathfrak{gl}_m$ (cf.
\cite{BLM}, \cite{Du}). Any $S_q(m)$-module $M$ naturally becomes a
module over $U_{K}(\mathfrak{gl}_m)$. The definition of weight space
we used here coincides with the usual definition of weight space for
$U_{K}(\mathfrak{gl}_m)$-module. In a similar way, we can define the
weight space for any $\widehat{S}_q(m)$-module. Therefore, we have
also the notion of weight space for any $S_q^{m,m}$-module. The
weights of any $S_q^{m,m}$-module are elements in the set
$\underline{\Lambda}(m):=\Lambda(m)\times\Lambda(m)$.\smallskip

There is a natural additive group structure on $\underline{\Lambda}(m)$. Let
$\bigl\{e^{\ulam}\bigr\}_{\ulam\in\underline{\Lambda}(m)}$ denote the standard basis of the group ring
$\Z[\underline{\Lambda}(m)]$ over $\Z$. Then $e^{\ulam}e^{\umu}=e^{\ulam+\umu}$. For any finite dimensional
$S_q^{m,m}$-module $M$, we define the formal character of $M$ as $$
\chf M=\sum_{\ulam\in\underline{\Lambda}(m)}\dim M_{\ulam}\,e^{\ulam}\in\Z[\underline{\Lambda}(m)].
$$
For any short exact sequence $0\rightarrow M'\rightarrow M\rightarrow M''\rightarrow 0$ of finite dimensional
$S_q^{m,m}$-modules, it is clear that $$
\chf M=\chf M'+\chf M''.
$$
Therefore, the map $\chf$ is a map defined on the Grothendieck group $\mathcal{R}(S_q^{m,m})$
associated to the category of finite dimensional $S_q^{m,m}$-modules.\smallskip

Set $e=\phi_{\omega_m,\omega_m}^{1}\otimes\widehat{\phi}_{\omega_m,\omega_m}^{1}$. Then $e$ is an idempotent in
$\widetilde{A}(m)$, and $e\widetilde{A}(m)e=A(m), eS_q^{m,m}e=\HH_q(\BS_{(m,m)})$. We define a functor $\widetilde{F}$
from the category of finite dimensional $\widetilde{A}(m)$-modules to the category of finite dimensional $A(m)$-modules
as follows: for any finite dimensional $\widetilde{A}(m)$-module
$M, N$, and any $\varphi\in\Hom_{\widetilde{A}(m)}(M,N)$, $\widetilde{F}(M)=Me$, $\widetilde{F}(N)=Ne$, and $$
\widetilde{F}(\varphi)(xe):=\varphi(x)e,\quad\,\forall\,x\in M.
$$
Let ${F}$ be the Schur functor (induced by $e$) from the category of
finite dimensional $S_q^{m,m}$-modules to the category of finite
dimensional $\HH_q(\BS_{(m,m)})$-modules. Then we have the following
commutative diagram of functors: \addtocounter{equation}{3}
\begin{equation}\label{Res}
\begin{CD}
\Mod\text{-}\widetilde{A}(m)@>{\Res}>>\Mod\text{-}S_q^{m,m}\\
@V{\widetilde{F}} VV @V{F}VV\\
\Mod\text{-}A(m)@>{\Res}>>\Mod\text{-}\HH_q(\BS_{(m,m)})
\end{CD}.
\end{equation}
We set
$\underline{\Lambda}^{+}(m):=\Lambda^+(m)\times\Lambda^{+}(m)$.

\addtocounter{theorem}{1}
\begin{lemma} \label{schur1} Let $(\lam,\mu)\in\underline{\Lambda}^{+}(m)$ such that $\lam\neq\mu$. Then we
have $$\begin{aligned}
&\widetilde{F}\bigl(\widetilde{\Delta}_{\lam,\lam}^{\pm}\bigr)=S(\lam,\lam)_{\pm},\quad
\widetilde{F}\bigl(\widetilde{\Delta}_{\lam,\mu}\bigr)=S(\lam,\mu),\\
&\widetilde{F}\bigl(\widetilde{L}_{\lam,\lam}^{\pm}\bigr)=\begin{cases}
D(\lam,\lam)_{\pm},&\text{if $\lam$ is $e$-restricted;}\\
0,
&\text{otherwise,}\end{cases},\\
&\widetilde{F}\bigl(\widetilde{L}_{\lam,\mu}\bigr)=\begin{cases}
D(\lam,\mu),&\text{if $(\lam,\mu)$ is $e$-restricted;}\\
0, &\text{otherwise,}\end{cases}.
\end{aligned}$$
\end{lemma}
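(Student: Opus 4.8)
\textbf{Proof plan for Lemma \ref{schur1}.}
The plan is to deduce everything from the commutative diagram \eqref{Res} together with the fact that the Schur functor $F$ and the functor $\widetilde{F}$ are both given by the idempotent truncation $M\mapsto Me$, where $e=\phi_{\omega_m,\omega_m}^{1}\otimes\widehat{\phi}_{\omega_m,\omega_m}^{1}$. First I would recall the classical facts that $F(\Delta_{\lam})\cong S_{\lam}$ and $F(\widehat\Delta_{\mu})\cong\widehat S_{\mu}$, and that $F(L_{\lam})\cong D_{\lam}$ when $\lam$ is $e$-restricted and $F(L_{\lam})=0$ otherwise (Dipper--James), from which it follows immediately, since $e$ is a tensor product of the two one-sided idempotents, that $F(\Delta_{\lam,\mu})\cong S_{\lam}\otimes\widehat S_{\mu}$ and $F(L_{\lam,\mu})\cong D_{\lam}\otimes\widehat D_{\mu}$ (or $0$) for the $S_q^{m,m}$-modules. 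Moreover under $F$ the distinguished generators $Z_{\lam}\otimes\widehat Z_{\mu}$ (resp.\ $\overline Z_{\lam}\otimes\widehat{\overline Z}_{\mu}$) are sent to $z'_{\lam}\otimes\widehat{z'_{\mu}}$ (resp.\ $\overline{z'_{\lam}}\otimes\widehat{\overline{z'_{\mu}}}$), because $Z_{\lam}e=y_{\lam'}T_{w_{\lam'}}\phi_{\lam,\omega_m}^{1}\phi_{\omega_m,\omega_m}^{1}$ recovers exactly $z'_{\lam}=y_{\lam'}T_{w_{\lam'}}x_{\lam}$ under the identification $\iota_m$, and similarly on the hat side; this is the content that makes the idempotent-truncation compatible with the explicit module generators.

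Next I would handle the induced modules. By definition $\widetilde\Delta_{\lam,\mu}=\Delta_{\lam,\mu}\!\uparrow_{S_q^{m,m}}^{\widetilde A(m)}$ and similarly for $\widetilde L_{\lam,\mu}$; since $\widetilde A(m)=S_q^{m,m}\oplus\theta S_q^{m,m}$ is free of rank $2$ as a left and right $S_q^{m,m}$-module (the $\Z/2\Z$-Clifford system), induction just doubles the underlying space, $\widetilde\Delta_{\lam,\mu}=\Delta_{\lam,\mu}\oplus\Delta_{\lam,\mu}\theta$, and applying the exact functor $\widetilde F$ (truncation by $e$, which commutes with $\Res$ by \eqref{Res}) gives $\widetilde F(\widetilde\Delta_{\lam,\mu})=F(\Delta_{\lam,\mu})\oplus F(\Delta_{\lam,\mu})\theta$. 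Comparing this with the definition
$$S(\lam,\mu)=\bigl(S_{\lam}\otimes\widehat S_{\mu}\bigr)\!\uparrow_{\HH_q(\BS_{(m,m)})}^{A(m)}$$
and using $A(m)=eS^{m,m}_qe\oplus eS^{m,m}_qe\,h(m)=\HH_q(\BS_{(m,m)})\oplus\HH_q(\BS_{(m,m)})h(m)$, together with $\widetilde F(\theta)=H(m)e=h(m)$ (up to the identifications already set up), yields $\widetilde F(\widetilde\Delta_{\lam,\mu})\cong S(\lam,\mu)$ as $A(m)$-modules, and identically $\widetilde F(\widetilde L_{\lam,\mu})\cong D(\lam,\mu)$ when $(\lam,\mu)$ is $e$-restricted and $=0$ otherwise (here one uses $F(L_{\lam})=0$ unless $\lam$ is $e$-restricted, and for a pair this forces \emph{both} $\lam$ and $\mu$ to be $e$-restricted).

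For the $\pm$-parts the argument is the same but carried out on the explicit submodule presentations. The module $\widetilde\Delta^{+}_{\lam,\lam}=\bigl(\sqrt{f_{(\lam,\lam)}(q)}\,Z_{\lam}\otimes\widehat Z_{\lam}+Z_{\lam}\otimes\widehat Z_{\lam}\,\theta\bigr)S_q^{m,m}$ is a cyclic $S_q^{m,m}$-submodule of $\widetilde\Delta_{\lam,\lam}$; truncating by $e$ (which is exact and sends the displayed generator to $\sqrt{f_{(\lam,\lam)}(q)}\,z'_{\lam}\otimes\widehat{z'_{\lam}}+(z'_{\lam}\otimes\widehat{z'_{\lam}})h(m)$, by the compatibility noted above and $\widetilde F(\theta)=h(m)$) gives precisely the generator appearing in the definition of $S(\lam,\lam)_{+}$, hence $\widetilde F(\widetilde\Delta^{+}_{\lam,\lam})=S(\lam,\lam)_{+}$; the minus sign is identical, and the two $L$-cases run the same way after passing to the canonical images $\overline Z_{\lam}$, $\overline{z'_{\lam}}$. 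The one point requiring a little care — and the main (though minor) obstacle — is bookkeeping: one must check that the idempotent truncation genuinely carries the internal direct sum decomposition $\widetilde\Delta_{\lam,\lam}=\widetilde\Delta^{+}_{\lam,\lam}\oplus\widetilde\Delta^{-}_{\lam,\lam}$ onto $S(\lam,\lam)=S(\lam,\lam)_{+}\oplus S(\lam,\lam)_{-}$ rather than collapsing it, which is guaranteed because $e$ acts as the identity on the cyclic generators in question (they lie in the $(\omega_m,\omega_m)$-weight space on which $e$ is the identity), so no summand is killed; all remaining verifications are the routine compatibility checks already invoked in the paragraph preceding the statement of the lemma.
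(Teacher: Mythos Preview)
Your proposal is correct and follows essentially the same approach as the paper, whose entire proof reads ``These follow from (\ref{Res}) and direct verification.'' You have simply spelled out in detail what that direct verification consists of: compatibility of the idempotent truncation with the tensor decomposition, the classical Dipper--James identifications $F(\Delta_\lam)\cong S_\lam$ and $F(L_\lam)\cong D_\lam$ or $0$, the rank-$2$ freeness of $\widetilde A(m)$ over $S_q^{m,m}$ (so induction commutes with truncation), and finally the matching of explicit cyclic generators under $e$.
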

\begin{proof} These follow from (\ref{Res}) and direct verification.
\end{proof}

\begin{lemma} \label{schur2} $\widetilde{F}$ is an exact functor. In particular,
$\widetilde{F}$ induces a homomorphism from the Grothendieck group
$\mathcal{R}(\widetilde{A}(m))$ associated to the category of finite
dimensional $\widetilde{A}(m)$-modules to the Grothendieck group
$\mathcal{R}({A}(m))$ associated to the category of finite
dimensional ${A}(m)$-modules.
\end{lemma}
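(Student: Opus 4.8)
The plan is to recognize $\widetilde{F}$ as the standard ``idempotent truncation'' functor $M\mapsto Me$ attached to the idempotent $e=\phi_{\omega_m,\omega_m}^{1}\otimes\widehat{\phi}_{\omega_m,\omega_m}^{1}\in\widetilde{A}(m)$, and to exploit the fact that such a functor is automatically exact because right multiplication by $e$ splits off a $K$-subspace direct summand. Before running the argument I would note that $\widetilde{A}(m)$, although realised as a non-unital subalgebra of $S_q(n)$, is a unital $K$-algebra in its own right (its identity being the unit of $S_q^{m,m}$), so that $e$ is a genuine idempotent of a unital ring and $A(m)=e\widetilde{A}(m)e$ is the corresponding unital corner algebra; all the manipulations below take place inside $\widetilde{A}(m)$.

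First I would record the naturality of the decomposition $M=Me\oplus M(1-e)$: for any homomorphism $\varphi\colon M\to N$ of right $\widetilde{A}(m)$-modules one has $\varphi(Me)\subseteq Ne$ and $\varphi(M(1-e))\subseteq N(1-e)$, so $\widetilde{F}(\varphi)$ is literally the restriction of $\varphi$ to the summand $Me$, landing in the $A(m)$-submodule $Ne=\widetilde{F}(N)$. Next, given a short exact sequence $0\to M'\xrightarrow{\alpha}M\xrightarrow{\beta}M''\to 0$ of finite dimensional right $\widetilde{A}(m)$-modules, I would verify exactness of the sequence $0\to M'e\to Me\to M''e\to 0$ obtained by applying $\widetilde{F}$: injectivity of $\alpha|_{M'e}$ is clear as it is a restriction of the monomorphism $\alpha$; surjectivity of $\beta|_{Me}$ follows from $\beta(Me)=\beta(M)e=M''e$; and exactness in the middle follows from $\ker(\beta|_{Me})=\ker\beta\cap Me=\alpha(M')\cap Me=\alpha(M')e=\alpha(M'e)$, where the penultimate equality uses that the subspace $\alpha(M')$ of $M$ is itself compatibly decomposed as $\alpha(M')e\oplus\alpha(M')(1-e)$. (Alternatively, one may observe that $\widetilde{F}\cong(-)\otimes_{\widetilde{A}(m)}\widetilde{A}(m)e$ and that $\widetilde{A}(m)e$ is a direct summand of the free left module $\widetilde{A}(m)$, hence flat, so the functor is exact.)

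Finally, for the ``in particular'' clause: exactness of $\widetilde{F}$ yields $[\widetilde{F}(M)]=[\widetilde{F}(M')]+[\widetilde{F}(M'')]$ in $\mathcal{R}(A(m))$ for every short exact sequence as above. Since $\mathcal{R}(\widetilde{A}(m))$ is the free abelian group on the isomorphism classes of finite dimensional right $\widetilde{A}(m)$-modules modulo precisely the relations coming from short exact sequences, the rule $[M]\mapsto[\widetilde{F}(M)]$ extends uniquely to a well-defined homomorphism of Grothendieck groups $\mathcal{R}(\widetilde{A}(m))\to\mathcal{R}(A(m))$. There is essentially no obstacle in this lemma; the only points needing care are the unital/non-unital bookkeeping noted at the outset and the elementary but necessary observation that the idempotent decompositions of $M'$, $M$, $M''$ are compatible with $\alpha$ and $\beta$, which is exactly what makes the truncated sequence exact in the middle.
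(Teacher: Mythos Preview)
Your proof is correct and is essentially the same approach as the paper's: the paper simply cites \cite[Section~6]{Gr} (Green's Schur functor), whose content is precisely the idempotent-truncation argument $M\mapsto Me$ that you have spelled out in detail. Your explicit verification of exactness via the natural splitting $M=Me\oplus M(1-e)$ is exactly what that reference provides.
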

\begin{proof} This follows from the same arguments as in \cite[Section 6]{Gr}.
\end{proof}

\begin{corollary} \label{cor321} For any $\lam,\mu\in\Lambda^{+}(m)$ with $\mu$ being $e$-restricted, we have the following equality of decomposition numbers: $$
\bigl[\widetilde{\Delta}_{\lam,\lam}^{+}:\widetilde{L}_{\mu,\mu}^{+}\bigr]=\bigl[S(\lam,\lam)_{+}:D(\mu,\mu)_{+}\bigr]=
\bigl[S_{(\lam,\lam)}^{+}:D_{(\mu,\mu)}^{+}\bigr].
$$
\end{corollary}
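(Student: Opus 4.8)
The plan is to read off both equalities from the two comparison functors already in hand: the Morita functor $F$ of (\ref{MoritaEquivalence}) and the functor $\widetilde{F}$ of (\ref{Res}). The second equality is the easy one. Since $F$ is a Morita equivalence it preserves decomposition numbers, and by the identifications $F(S^{\pm}_{(\lam,\lam)})\cong S(\lam,\lam)_{\pm}$ and $F(D^{\pm}_{(\mu,\mu)})\cong D(\mu,\mu)_{\pm}$ recorded just before Lemma \ref{decnumodd1}, one gets $\bigl[S(\lam,\lam)_{+}:D(\mu,\mu)_{+}\bigr]=\bigl[S^{+}_{(\lam,\lam)}:D^{+}_{(\mu,\mu)}\bigr]$ immediately.

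For the first equality I would exploit the exactness of $\widetilde{F}$ (Lemma \ref{schur2}). Fix a composition series of the $\widetilde{A}(m)$-module $\widetilde{\Delta}^{+}_{\lam,\lam}$. Applying the exact functor $\widetilde{F}$ turns this into a filtration of $\widetilde{F}(\widetilde{\Delta}^{+}_{\lam,\lam})=S(\lam,\lam)_{+}$ (Lemma \ref{schur1}) whose successive quotients are the $\widetilde{F}$-images of the composition factors of $\widetilde{\Delta}^{+}_{\lam,\lam}$. By the classification of the absolutely simple $\widetilde{A}(m)$-modules obtained above, every such composition factor is one of $\widetilde{L}_{\nu,\rho}$ with $\nu\neq\rho$, or $\widetilde{L}^{+}_{\beta,\beta}$, or $\widetilde{L}^{-}_{\beta,\beta}$, and by Lemma \ref{schur1} the functor $\widetilde{F}$ sends each of these either to a simple $A(m)$-module or to $0$. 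Consequently, in the Grothendieck group of $A(m)$,
$$
\bigl[S(\lam,\lam)_{+}\bigr]=\sum_{X}\bigl[\widetilde{\Delta}^{+}_{\lam,\lam}:X\bigr]\,\bigl[\widetilde{F}(X)\bigr],
$$
the sum running over a complete set of simple $\widetilde{A}(m)$-modules $X$.

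It then remains to pin down which simple $X$ contribute $D(\mu,\mu)_{+}$. Using Lemma \ref{schur1}, $\widetilde{F}(\widetilde{L}^{-}_{\mu,\mu})\cong D(\mu,\mu)_{-}$, while $\widetilde{F}(\widetilde{L}_{\nu,\rho})$ (for $\nu\neq\rho$) is $D(\nu,\rho)$ or $0$ and $\widetilde{F}(\widetilde{L}^{\pm}_{\beta,\beta})$ is $D(\beta,\beta)_{\pm}$ or $0$; since the modules $D(\mu,\mu)_{+}$, $D(\mu,\mu)_{-}$ and the various $D(\nu,\rho)$ are pairwise non-isomorphic simple $A(m)$-modules --- which one sees by transporting the classification of the simple $\HH_q(D_n)$-modules recalled in Section 2 through the Morita equivalence $F$ --- the only simple $\widetilde{A}(m)$-module $X$ with $\bigl[\widetilde{F}(X):D(\mu,\mu)_{+}\bigr]\neq 0$ is $X=\widetilde{L}^{+}_{\mu,\mu}$, for which this multiplicity equals $1$ (here $\mu$ being $e$-restricted is used so that $\widetilde{F}(\widetilde{L}^{+}_{\mu,\mu})\neq 0$). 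Comparing multiplicities of $D(\mu,\mu)_{+}$ on the two sides of the displayed identity yields $\bigl[S(\lam,\lam)_{+}:D(\mu,\mu)_{+}\bigr]=\bigl[\widetilde{\Delta}^{+}_{\lam,\lam}:\widetilde{L}^{+}_{\mu,\mu}\bigr]$, as desired. The only point that takes a little care is the pairwise non-isomorphism of the relevant simple $A(m)$-modules --- in particular $D(\mu,\mu)_{+}\not\cong D(\mu,\mu)_{-}$ --- so that the label of a composition factor of $\widetilde{\Delta}^{+}_{\lam,\lam}$ is faithfully detected by its $\widetilde{F}$-image; everything else is a formal consequence of the exactness of $\widetilde{F}$ together with Lemmas \ref{schur1} and \ref{schur2}.
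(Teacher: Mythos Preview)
Your proposal is correct and is essentially the same argument the paper intends: the paper's proof is the one-line ``This follows directly from Lemma \ref{schur1} and \ref{schur2}'', and what you have written is precisely the unpacking of that line (together with the Morita equivalence $F$ for the second equality, which the paper takes as implicit from the identifications recorded before Lemma \ref{decnumodd1}). Your care in checking that $D(\mu,\mu)_{+}$ can arise only from $\widetilde{L}^{+}_{\mu,\mu}$, via the pairwise non-isomorphism of the simple $A(m)$-modules transported through $F$, is exactly the point that makes the argument work.
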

\begin{proof} This follows directly from Lemma \ref{schur1} and \ref{schur2}.
\end{proof}

Therefore, computing the decomposition number $\bigl[S_{(\lam,\lam)}^{+}:D_{(\mu,\mu)}^{+}\bigr]$ can be reduced to
computing the decomposition number
$\bigl[\widetilde{\Delta}_{\lam,\lam}^{+}:\widetilde{L}_{\mu,\mu}^{+}\bigr]$. The latter will be done in the final
section.

\bigskip\bigskip
\section{Computing the Laurent polynomial $f_{\ulam}(v)$}

Let $a$ be a fixed integer with $0\leq a\leq n$ and $\ulam=(\lam^{(1)},\lam^{(2)})$ be a fixed $a$-bipartition of $n$. The
purpose of this section is to give a close formula for the Laurent polynomial $f_{\ulam}(v)$, which was introduced in
\cite[Lemma
3.2]{Hu1}. \smallskip

Recall that by \cite[Lemma 3.2]{Hu1}, certain central element (denoted by $z_{a,n-a}$ in
\cite[Lemma 3.2]{Hu1}) of $\HH_v(\BS_{(a,n-a)})$ acts on the dual Specht module $S_{\ulam}$ as the scalar
$f_{\ulam}(v)$. These central elements $z_{a,n-a}$ arise in the study of certain homomorphism between some (dual)
Specht modules over the Hecke algebra $\HH_v(B_n)$. Indeed, similar central elements do arise if we consider the more
general type $B_n$ Hecke algebra $\HH_{v,\tilde{v}}(B_n)$ which has two parameters $v,\widetilde{v}$, where
$\widetilde{v}$ is another indeterminate over $\Z$. By some abuse of notations, we will denote the resulting Laurent
polynomial by $f_{\ulam}(v,\widetilde{v})$. The relation with our previous introduced Laurent polynomial $f_{\ulam}(v)$
is given by $$
f_{\ulam}(v)=f_{\ulam}(v,1).$$
In this section, we shall give a closed formula for the
Laurent polynomial $f_{\ulam}(v,\widetilde{v})$.\smallskip

We fix some notations. Let $v, \tilde{v}$ be two indeterminates over $\Z$. Let
$\widetilde{\mathcal{A}}:=\Z[v,v^{-1},\tilde{v},\tilde{v}^{-1}]$.
The Hecke algebra $\HH_{v,\tilde{v}}(B_n)$ of type $B_n$ over $\widetilde{\mathcal{A}}$ is the
associative unital $\widetilde{\mathcal{A}}$-algebra with generators $T_0, T_1,\cdots,
T_{n-1}$ subject to the following relations
$$\begin{aligned}
&(T_0+1)(T_0-\tilde{v})=0,\\
&T_0T_1T_0T_1=T_1T_0T_1T_0,\\
&(T_i+1)(T_i-v)=0,\quad\text{for $1\leq i\leq n-1$,}\\
&T_iT_{i+1}T_i=T_{i+1}T_{i}T_{i+1},\quad\text{for $1\leq i\leq n-2$,}\\
&T_iT_j=T_jT_i,\quad\text{for $0\leq i<j-1\leq n-2$.}\end{aligned}
$$

Let $\tr$ be the trace form on the Hecke algebras
$\HH_{v,\tilde{v}}(B_n)$ and $\HH_v(\BS_n)$ as defined in
\cite[p. 697, line 30]{Ma1}. Note that the trace form is denoted by
$\tau$ in \cite[p. 697, line 30]{Ma1}.

\begin{definition} {\rm (\cite[(2.1)]{DJMu}, \cite[(3.8)]{DJ2})} \label{df41}
For any non-negative integers $k,a,b$, we set $$\begin{aligned}
u_{k}^{+}&=\begin{cases} 1 &\text{{\rm if}\,\,\, $k=0$,}\\
\prod\limits_{i=1}^{k}\Bigl(v^{i-1}+T_{i-1}\cdots
T_{1}T_{0}T_{1}\cdots T_{i-1}\Bigr) & \text{{\rm if}\,\,\, $1\leq
k\leq n$.} \end{cases} \\
u_{k}^{-}&=\begin{cases} 1 &\text{{\rm if}\,\,\, $k=0$,}\\
\prod\limits_{i=1}^{k}\Bigl(\tilde{v}v^{i-1}-T_{i-1}\cdots
T_{1}T_{0}T_{1}\cdots T_{i-1}\Bigr) & \text{{\rm if}\,\,\, $1\leq
k\leq n$.} \end{cases}\\
h_{a,b}&=T_{w_{a,b}}\end{aligned} $$ where $$
w_{a,b}=\begin{cases} \underbrace{s_{a}\cdots
s_{1}}\underbrace{s_{a+1}\cdots s_{2}}\cdots
\underbrace{s_{a+b-1}\cdots s_{b}}, &\text{if $a,b$ are positive integers,}\\
1. &\text{if $a$ or $b$ is zero.}\\
\end{cases} $$
\end{definition}

By some abuse of notations, for each integer $i\in\{1,2,\cdots,n-1\}\setminus\{a\}$, we set
$$
\widehat{i}=\begin{cases} {i+n-a} &\text{{\rm if}\,\,\, $1\leq i<a$,}\\
{i-a} &\text{{\rm if}\,\,\, $a<i\leq n-1$.}\end{cases} $$
Let \,\,\,$\widehat{\null}$\,\,\,\, be the group isomorphism from the Young subgroup
$\BS_{(a,n-a)}$ onto the Young subgroup $\BS_{(n-a,a)}$ which is defined on
generators by $\widehat{s_i}=s_{\widehat{i}}$ for each integer $i\in\{1,2,\cdots,n-1\}\setminus\{a\}$.
We use the same notation \,\,\,$\widehat{\null}$\,\,\, to denote the algebra
isomorphism from the Hecke algebra $\HH_v(\BS_{(a,n-a)})$ onto the Hecke algebra $\HH_v(\BS_{(n-a,a)})$
which is defined on generators by $\widehat{T_i}=T_{\widehat{i}}$
for each integer $i\in\{1,2,\cdots,n-1\}\setminus\{a\}$.
By abuse of notation, the inverse of \,\,\,$\widehat{\null}$\,\,\, will
also be denoted by \,\,$\widehat{\null}$\,\,\,\,. By
\cite[(3.23)]{DJ3}, we know that there exists an element
$z_{a,n-a}$ in the center of $\HH_v(\BS_{(a,n-a)})\otimes_{\mathcal{A}}\widetilde{\mathcal{A}}$ such that
\addtocounter{equation}{1}
\begin{equation}\label{eq42}
u_{n-a}^{-}h_{n-a,a}u_{a}^{+}h_{a,n-a}u_{n-a}^{-}h_{n-a,a}u_{a}^{+}=
u_{n-a}^{-}h_{n-a,a}u_{a}^{+}z_{a,n-a}. \end{equation}

Let $\ast$ be the anti-automorphism of $\HH_{v,\tilde{v}}(B_n)$ which is defined on generators by $T_i^{\ast}=T_i$ for
each
$0\leq i\leq n-1$.

\addtocounter{theorem}{1}
\begin{lemma}\label{lm43} For any integer $a$ with $1\leq a\leq n$, we have that
\begin{enumerate}
\item[(4.3.1)] $u_{a}^{+}h_{a,n-a}u_{n-a}^{-}h_{n-a,a}u_{a}^{+}h_{a,n-a}u_{n-a}^{-}=
u_{a}^{+}h_{a,n-a}u_{n-a}^{-}\hat{z}_{a,n-a}$,
\item[(4.3.2)] $\bigl(z_{a,n-a}\bigr)^{\ast}=z_{a,n-a}$.
\end{enumerate}
\end{lemma}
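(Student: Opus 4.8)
The plan is to deduce both statements from the defining relation (\ref{eq42}) together with symmetry arguments, without re-doing the double-coset combinatorics that went into \cite[(3.23)]{DJ3}.

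For (4.3.1), the idea is to apply the algebra isomorphism $\widehat{\null}\colon\HH_v(\BS_{(a,n-a)})\to\HH_v(\BS_{(n-a,a)})$ to the identity (\ref{eq42}). First I would observe that $\widehat{\null}$ sends $u_{n-a}^{-}h_{n-a,a}u_{a}^{+}h_{a,n-a}$ to $u_{a}^{+}h_{a,n-a}u_{n-a}^{-}h_{n-a,a}$ and, iterating, sends the whole left-hand side of (\ref{eq42}) to the left-hand side of (4.3.1); on the right-hand side it sends $u_{n-a}^{-}h_{n-a,a}u_{a}^{+}z_{a,n-a}$ to $u_{a}^{+}h_{a,n-a}u_{n-a}^{-}\,\widehat{z}_{a,n-a}$. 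The only genuinely routine point to check is the compatibility of $\widehat{\null}$ with the elements $u_k^{\pm}$ and $h_{a,b}$, which is immediate from Definition \ref{df41} once one checks that $\widehat{\null}$ permutes the relevant generators $T_0,T_1,\dots$ in the way dictated by the hat on indices; I would spell out just enough of this to make the reader comfortable. Since $\widehat{z}_{a,n-a}$ lies in the center of $\HH_v(\BS_{(n-a,a)})\otimes_{\mathcal{A}}\widetilde{\mathcal{A}}$ (it is the image of a central element under an algebra isomorphism), the resulting identity has exactly the form asserted in (4.3.1).

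For (4.3.2), I would apply the anti-automorphism $\ast$ to (\ref{eq42}). Each of $u_k^{+}$, $u_k^{-}$, $h_{a,b}$ is, up to reversing the order of factors, built from elements fixed by $\ast$; in fact one checks directly from Definition \ref{df41} that $(u_k^{+})^{\ast}=u_k^{+}$ and $(u_k^{-})^{\ast}=u_k^{-}$ (each factor $v^{i-1}\pm T_{i-1}\cdots T_1T_0T_1\cdots T_{i-1}$ is $\ast$-fixed, but the product reverses), and that $(h_{a,b})^{\ast}=T_{w_{a,b}^{-1}}=h_{n-a,a}$ after matching reduced words — here I would use that $\ast(T_w)=T_{w^{-1}}$. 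Taking $\ast$ of both sides of (\ref{eq42}) and reading the words backwards turns the left-hand side into itself (the string $u^{-}h\,u^{+}h\,u^{-}h\,u^{+}$ is palindromic under the substitution $h_{a,n-a}\leftrightarrow h_{n-a,a}$) and turns the right-hand side into $(z_{a,n-a})^{\ast}\,u_{a}^{+}h_{a,n-a}u_{n-a}^{-}h_{n-a,a}$; comparing with (\ref{eq42}) and using that the element $u_{n-a}^{-}h_{n-a,a}u_{a}^{+}$ is not a zero-divisor (its right ideal is, by the results recalled above, isomorphic to a dual Specht module, so multiplication by it is injective after extending scalars), we conclude $(z_{a,n-a})^{\ast}=z_{a,n-a}$.

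The main obstacle I anticipate is bookkeeping rather than conceptual: matching reduced expressions so that $\ast$ and $\widehat{\null}$ act on $h_{a,b}$ and on the $u_k^{\pm}$ exactly as claimed, and in particular verifying the palindrome/cancellation step in (4.3.2) carefully enough that one may cancel the common left factor $u_{n-a}^{-}h_{n-a,a}u_{a}^{+}$. I would handle the cancellation either by invoking the non-zero-divisor property of that factor (via its identification with a Specht module generator over the extended ground ring) or, more robustly, by evaluating both sides on the trace form $\tr$ as in \cite{Ma1} and using non-degeneracy; the trace-form route also makes transparent why the scalar $z_{a,n-a}$ is well-defined and fixed by $\ast$.
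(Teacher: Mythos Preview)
Your approach to (4.3.1) has a genuine gap. The isomorphism $\widehat{\null}$ is defined only on the type~$A$ parabolic subalgebra $\HH_v(\BS_{(a,n-a)})$, sending $T_i\mapsto T_{\widehat{i}}$ for $i\in\{1,\dots,n-1\}\setminus\{a\}$. It is \emph{not} defined on $T_0$ or on $T_a$, and hence not on $u_k^{\pm}$ (which involve $T_0$) nor on $h_{a,n-a}$ (which involves $T_a$). Equation~(\ref{eq42}) lives in $\HH_{v,\tilde{v}}(B_n)$, so you cannot apply $\widehat{\null}$ to it. Moreover, there is no algebra automorphism of $\HH_{v,\tilde{v}}(B_n)$ extending $\widehat{\null}$ and fixing $T_0$: such a map would send $T_1$ to $T_{n-a+1}$, which commutes with $T_0$, destroying the type~$B$ braid relation. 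So the claim that $\widehat{\null}$ carries $u_{n-a}^{-}h_{n-a,a}u_{a}^{+}h_{a,n-a}$ to $u_{a}^{+}h_{a,n-a}u_{n-a}^{-}h_{n-a,a}$ cannot be made sense of. The paper instead first invokes the argument of \cite[(3.23)]{DJ3} a second time to obtain a central element $z'_{n-a,a}\in\HH_v(\BS_{(n-a,a)})$ satisfying the identity in (4.3.1), and then identifies $z'_{n-a,a}$ with $\hat z_{a,n-a}$ by computing a longer alternating product $u^{-}h\,u^{+}h\,u^{-}h\,u^{+}h\,u^{-}h\,u^{+}$ in two ways and cancelling the invertible $z_{a,n-a}$.

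Your approach to (4.3.2) is the right one and matches the paper, but two details are off. First, the left-hand side of (\ref{eq42}) is \emph{not} fixed by~$\ast$: applying $\ast$ (using $(u_k^{\pm})^{\ast}=u_k^{\pm}$ and $h_{a,b}^{\ast}=h_{b,a}$) turns it into $u_a^{+}h_{a,n-a}u_{n-a}^{-}h_{n-a,a}u_a^{+}h_{a,n-a}u_{n-a}^{-}$, which is the left-hand side of (4.3.1), not of (\ref{eq42}). Second, you must therefore compare with (4.3.1), not with (\ref{eq42}). Doing so gives
\[
u_a^{+}h_{a,n-a}u_{n-a}^{-}\hat z_{a,n-a}
= z_{a,n-a}^{\ast}\,u_a^{+}h_{a,n-a}u_{n-a}^{-}
= u_a^{+}h_{a,n-a}u_{n-a}^{-}\,\widehat{z_{a,n-a}^{\ast}},
\]
the last step using that $z_{a,n-a}^{\ast}\in\HH_v(\BS_{(a,n-a)})$ commutes past $u_a^{+}$ and $u_{n-a}^{-}$ and is conjugated to $\widehat{z_{a,n-a}^{\ast}}$ by $h_{a,n-a}$. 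Cancelling the non-zero-divisor $u_a^{+}h_{a,n-a}u_{n-a}^{-}$ then yields $z_{a,n-a}^{\ast}=z_{a,n-a}$.
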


\begin{proof} Using the same argument as \cite[(3.23)]{DJ3}, we know that there exists an element
$z'_{n-a,a}$ in the center of $\HH_q(\BS_{(n-a,a)})\otimes_{\mathcal{A}}\widetilde{\mathcal{A}}$ such that $$
u_{a}^{+}h_{a,n-a}u_{n-a}^{-}h_{n-a,a}u_{a}^{+}h_{a,n-a}u_{n-a}^{-}=
u_{a}^{+}h_{a,n-a}u_{n-a}^{-}z'_{n-a,a}. $$

Note that $$\begin{aligned}
&\quad\,\,u_{n-a}^{-}h_{n-a,a}u_{a}^{+}h_{a,n-a}u_{n-a}^{-}h_{n-a,a}u_{a}^{+}h_{a,n-a}
u_{n-a}^{-}h_{n-a,a}u_{a}^{+}\\
&=u_{n-a}^{-}h_{n-a,a}u_{a}^{+}h_{a,n-a}u_{n-a}^{-}h_{n-a,a}u_{a}^{+}z_{a,n-a}\\
&=u_{n-a}^{-}h_{n-a,a}u_{a}^{+}z_{a,n-a}^2. \end{aligned} $$ On
the other hand, we have that $$
\begin{aligned}
&\quad\,\,u_{n-a}^{-}h_{n-a,a}u_{a}^{+}h_{a,n-a}u_{n-a}^{-}h_{n-a,a}u_{a}^{+}h_{a,n-a}
u_{n-a}^{-}h_{n-a,a}u_{a}^{+}\\
&=u_{n-a}^{-}h_{n-a,a}u_{a}^{+}h_{a,n-a}u_{n-a}^{-}z'_{n-a,a}h_{n-a,a}u_{a}^{+}\\
&=u_{n-a}^{-}h_{n-a,a}u_{a}^{+}h_{a,n-a}u_{n-a}^{-}z'_{n-a,a}h_{n-a,a}u_{a}^{+}\hat{z'}_{n-a,a}\\
&=u_{n-a}^{-}h_{n-a,a}u_{a}^{+}z_{a,n-a}\hat{z'}_{n-a,a}.
\end{aligned} $$
It follows that $z_{a,n-a}^2=z_{a,n-a}\hat{z'}_{n-a,a}$. Note that
(see \cite{DJ3}) $z_{a,n-a}$ is invertible in
${\HH}_{v,\tilde{v}}(\BS_n)\otimes_{\widetilde{\mathcal{A}}}\mathbb{Q}(v,\tilde{v})$. Hence we conclude that
$z'_{n-a,a}=\hat{z}_{a,n-a}$, as required. This proves 1).

Applying the anti-automorphism $\ast$ to both sides of
(\ref{eq42}), we get that $$\begin{aligned}
u_{a}^{+}h_{a,n-a}u_{n-a}^{-}\hat{z}_{a,n-a}&=u_{a}^{+}h_{a,n-a}u_{n-a}^{-}h_{n-a,a}u_{a}^{+}h_{a,n-a}u_{n-a}^{-}\\
&=z_{a,n-a}^{\ast}u_{a}^{+}h_{a,n-a}u_{n-a}^{-}=u_{a}^{+}h_{a,n-a}u_{n-a}^{-}\widehat{z_{a,n-a}^{\ast}},
\end{aligned}
$$
which implies that $\bigl(z_{a,n-a}\bigr)^{\ast}=z_{a,n-a}$, as
required.
\end{proof}

For any $a$-bipartition $\ulam=(\lam^{(1)},\lam^{(2)})$ of $n$, we
define $z_{\ulam}=z_{\lam^{(1)}}\widehat{z_{\lam^{(2)}}}$, where
$z_{\lam^{(1)}}=x_{\lam^{(1)}}T_{w_{\lam^{(1)}}}y_{\lam^{(1)'}}$,
$z_{\lam^{(2)}}=x_{\lam^{(2)}}T_{w_{\lam^{(2)}}}y_{\lam^{(2)'}}$. We set
$\ulam':=(\lam^{(1)'},\lam^{(2)'})$. Recall that
$\widehat{\ulam}=(\lam^{(2)},\lam^{(1)})$, which is an
$(n-a)$-bipartition of $n$. We define the Specht modules $S^{\ulam}$ and the twisted Specht module
$\widehat{S}^{\widehat\ulam}$
to be: $$
S^{\ulam}:=u_{n-a}^{-}h_{n-a,a}u_{a}^{+}z_{\ulam}\HH_{v,\tilde{v}}(B_n),\quad
\widehat{S}^{\widehat\ulam}:=u_{a}^{+}h_{a,n-a}u_{n-a}^{-}z_{\widehat\ulam}\HH_{v,\tilde{v}}(B_n).
$$
Let $\theta_{\ulam}$ (resp., $\delta_{\ulam}$) be the right
$\HH_{v,\tilde{v}}(B_n)$-module homomorphism from $S^{\ulam}$
to $\widehat{S}^{\widehat\ulam}$ (resp., from $\widehat{S}^{\widehat\ulam}$ to $S^{\ulam}$) given by left
multiplication with $u_{a}^{+}h_{a,n-a}$ (resp., with
$u_{n-a}^{-}h_{n-a,a}$). It is clear that both $\theta_{\ulam}$ and
$\delta_{\ulam}$ are well-defined right
$\HH_{v,\tilde{v}}(B_n)$-module homomorphisms.

\begin{lemma} \label{lm44} For any $a$-bipartition $\ulam$ of $n$, there
exists a Laurent polynomial $f_{\ulam}(v,\tilde{v})\in
\widetilde{\mathcal{A}}$ such that
$z_{a,n-a}z_{\ulam}=f_{\ulam}(v,\tilde{v})z_{\ulam},\,\,
\hat{z}_{n-a,a}z_{\widehat{\ulam}}=f_{\ulam}(v,\tilde{v})z_{\widehat\ulam}$.
In particular, both $\theta_{\ulam}\delta_{\ulam}$ and
$\delta_{\ulam}\theta_{\ulam}$ are scalar multiplication by
$f_{\ulam}(v,\tilde{v})$.
\end{lemma}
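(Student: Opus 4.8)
The plan is to adapt the argument of \cite[Lemma 3.2]{Hu1} to the two-parameter algebra $\HH_{v,\tilde{v}}(B_n)$: first produce the scalar by Schur's lemma over the fraction field, then check that it is integral, and finally read off the statement about $\theta_{\ulam}$ and $\delta_{\ulam}$ from (\ref{eq42}) and Lemma \ref{lm43}.

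First I would show that $z_{a,n-a}z_{\ulam}=c\,z_{\ulam}$ for a suitable scalar $c$. Since $z_{a,n-a}$ is central in $\HH_v(\BS_{(a,n-a)})\otimes_{\mathcal{A}}\widetilde{\mathcal{A}}$, the relation $z_{a,n-a}z_{\ulam}=z_{\ulam}z_{a,n-a}$ shows that left multiplication by $z_{a,n-a}$ preserves the right ideal $z_{\ulam}\HH_v(\BS_{(a,n-a)})$ and is a right $\HH_v(\BS_{(a,n-a)})$-module endomorphism of it. Now $z_{\ulam}=z_{\lam^{(1)}}\widehat{z_{\lam^{(2)}}}$, with the two factors lying in the commuting tensor factors $\HH_v(\BS_a)$ and $\HH_v(\BS_{\{a+1,\cdots,n\}})$, and by the Dipper--James construction (cf.\ \cite[Theorem 3.5]{DJ2}, \cite[(5.2), (5.3)]{Mu}) each of $z_{\lam^{(1)}}\HH_v(\BS_a)$ and $\widehat{z_{\lam^{(2)}}}\HH_v(\BS_{\{a+1,\cdots,n\}})$ is a Specht module; hence $z_{\ulam}\HH_v(\BS_{(a,n-a)})$ is an outer tensor product of two Specht modules. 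Extending scalars to $\mathbb{Q}(v,\tilde{v})$, over which $\HH_v(\BS_{(a,n-a)})$ is split semisimple, this module becomes absolutely irreducible, so by Schur's lemma left multiplication by $z_{a,n-a}$ is multiplication by a scalar $c\in\mathbb{Q}(v,\tilde{v})$; evaluating on the generator gives $z_{a,n-a}z_{\ulam}=c\,z_{\ulam}$, and $c\neq 0$ because $z_{a,n-a}$ is invertible over $\mathbb{Q}(v,\tilde{v})$ (as recalled in the proof of Lemma \ref{lm43}).

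Next I would verify that $c\in\widetilde{\mathcal{A}}$ and set $f_{\ulam}(v,\tilde{v}):=c$. Writing $z_{\lam^{(1)}}=x_{\lam^{(1)}}T_{w_{\lam^{(1)}}}y_{\lam^{(1)'}}$ and using the length additivity $\ell(u\,w_{\lam^{(1)}}\,u')=\ell(u)+\ell(w_{\lam^{(1)}})+\ell(u')$ for $u\in\BS_{\lam^{(1)}}$, $u'\in\BS_{\lam^{(1)'}}$ together with $\BS_{\lam^{(1)}}\cap w_{\lam^{(1)}}\BS_{\lam^{(1)'}}w_{\lam^{(1)}}^{-1}=\{1\}$, one sees that the basis element $T_{w_{\lam^{(1)}}}$ occurs in $z_{\lam^{(1)}}$ with coefficient $1$, and similarly for $\widehat{z_{\lam^{(2)}}}$; hence some basis element $T_w$ occurs in $z_{\ulam}$ with coefficient $1$. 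Since $z_{a,n-a}$ and $z_{\ulam}$ have coefficients in $\widetilde{\mathcal{A}}$, so does $z_{a,n-a}z_{\ulam}=c\,z_{\ulam}$, and comparing the coefficient of $T_w$ forces $c\in\widetilde{\mathcal{A}}$. Applying the same reasoning to $\widehat{\ulam}$ (an $(n-a)$-bipartition of $n$) and the corresponding central element $\hat{z}_{n-a,a}$ of $\HH_v(\BS_{(n-a,a)})$ appearing in Lemma \ref{lm43}\,(4.3.1) produces $c'\in\widetilde{\mathcal{A}}$ with $\hat{z}_{n-a,a}z_{\widehat\ulam}=c'\,z_{\widehat\ulam}$.

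It remains to identify the compositions and to see $c=c'$. A typical element of $S^{\ulam}$ has the form $u_{n-a}^{-}h_{n-a,a}u_{a}^{+}z_{\ulam}h$; applying $\delta_{\ulam}\theta_{\ulam}$ multiplies it on the left by $u_{n-a}^{-}h_{n-a,a}u_{a}^{+}h_{a,n-a}u_{n-a}^{-}h_{n-a,a}u_{a}^{+}$, which by (\ref{eq42}) equals $u_{n-a}^{-}h_{n-a,a}u_{a}^{+}z_{a,n-a}$; hence $\delta_{\ulam}\theta_{\ulam}(u_{n-a}^{-}h_{n-a,a}u_{a}^{+}z_{\ulam}h)=c\,u_{n-a}^{-}h_{n-a,a}u_{a}^{+}z_{\ulam}h$, so $\delta_{\ulam}\theta_{\ulam}=c\cdot\id_{S^{\ulam}}$. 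Symmetrically, using Lemma \ref{lm43}\,(4.3.1) in place of (\ref{eq42}), $\theta_{\ulam}\delta_{\ulam}=c'\cdot\id_{\widehat{S}^{\widehat\ulam}}$. Since $c\neq 0$ and $S^{\ulam}\neq 0$, the map $\theta_{\ulam}$ is nonzero, and from $\theta_{\ulam}(\delta_{\ulam}\theta_{\ulam})=(\theta_{\ulam}\delta_{\ulam})\theta_{\ulam}$ we obtain $c\,\theta_{\ulam}=c'\,\theta_{\ulam}$, whence $c=c'$; with $f_{\ulam}(v,\tilde{v}):=c$ all the assertions follow. The one genuinely delicate point is the integrality step ($c\in\widetilde{\mathcal{A}}$ rather than merely $c\in\mathbb{Q}(v,\tilde{v})$): everything else is formal manipulation with relations already in hand, but the integrality requires the explicit shape of the Specht-module generator $z_{\ulam}$, namely that one of its structure constants in the $T_w$-basis is a unit. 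Indeed, this lemma could essentially be quoted from \cite[Lemma 3.2]{Hu1}, the argument there being the same once the one-parameter Hecke algebra is replaced by the two-parameter one.
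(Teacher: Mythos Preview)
Your proposal is correct and follows essentially the same approach as the paper, which simply cites \cite[(4.1)]{DJ1} and \cite[Lemma 3.2]{Hu1} for the first part and Lemma~\ref{lm43} for the second. The only cosmetic differences are that \cite[(4.1)]{DJ1} (the fact that $x_{\lam}\HH y_{\lam'}$ is free of rank one over the base ring, generated by $z_\lam$) yields the integrality of the scalar in one step without your coefficient comparison, and that the second eigenvalue equation follows by applying the isomorphism \,\,$\widehat{\null}$\,\, to the first (since $\widehat{z_{\ulam}}=z_{\widehat\ulam}$), making $c=c'$ immediate without the associativity argument.
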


\begin{proof} The first part is an easy consequence of
\cite[(4.1)]{DJ1}, by using the same argument as in the proof of
\cite[(3.2)]{Hu1}. The second part follows from Lemma \ref{lm43}.
\end{proof}

Let $\mu$ be a partition of $a$. Let $z_{\mu}:=x_{\mu}T_{w_{\mu}}y_{\mu'}$. Then $S^{\mu}:=z_{\mu}\HH_v(\BS_a)$ is the
Specht module of $\HH_v(\BS_a)$ corresponding to $\mu$. By \cite[(3.5)]{DJ2} and \cite[(5.2), (5.3)]{Mu}, the dual
Specht module $S_{\mu}$ of $\HH_v(\BS_a)$ is isomorphic to the right ideal generated by $y_{\mu'}T_{w_{\mu'}}x_{\mu}$.
Let $\theta_{\mu}$ (resp., $\delta_{\mu}$) be the right $\HH_v(\BS_a)$-module homomorphism from $S^{\mu}$ to
$y_{\mu'}T_{w_{\mu'}}x_{\mu}\HH_v(\BS_a)$ (resp., from $y_{\mu'}T_{w_{\mu'}}x_{\mu}\HH_v(\BS_a)$ to $S^{\mu}$)
given by left multiplication with $y_{\mu'}T_{w_{\mu'}}$ (resp., with $x_{\mu}T_{w_{\mu}}$). The following result is
coming from \cite[(5.8)]{KM}. But we shall give a different proof here in order to illustrate the technique which will
be used in the proof of the main theorem in this section.

\begin{lemma} {\rm (\cite[(5.8)]{KM})} \label{KMthm} Let $\mu$ be a partition of $a$. Then both
$\theta_{\mu}\delta_{\mu}$ and $\delta_{\mu}\theta_{\mu}$ are
scale multiplication of $v^{\ell(w_{\mu})}s_{\mu}(v)$, where
$s_{\mu}(v)\in {\mathcal A}$ is the Schur element associated to $\mu$. In particular, $$
z_{\mu}T_{w_{\mu'}}z_{\mu}=v^{\ell(w_{\mu})}s_{\mu}(v)z_{\mu}.
$$
\end{lemma}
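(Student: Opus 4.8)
The plan is to compute the composite $\theta_\mu\delta_\mu$ directly on the generator $z_\mu=x_\mu T_{w_\mu}y_{\mu'}$ of the Specht module $S^\mu$, reducing the whole claim to a single identity about the element $z_\mu T_{w_{\mu'}} z_\mu$ in $\HH_v(\BS_a)$. Indeed, $\delta_\mu\theta_\mu$ acts on $S^\mu$ by left multiplication with $x_\mu T_{w_\mu}\cdot y_{\mu'}T_{w_{\mu'}}$, and since $z_\mu = x_\mu T_{w_\mu} y_{\mu'}$ we have $(x_\mu T_{w_\mu} y_{\mu'}T_{w_{\mu'}})z_\mu = z_\mu T_{w_{\mu'}} z_\mu$ up to bookkeeping with the $x$ and $y$ factors; so it suffices to prove $z_\mu T_{w_{\mu'}} z_\mu = c\, z_\mu$ for the scalar $c=v^{\ell(w_\mu)}s_\mu(v)$, and the symmetric statement for $\theta_\mu\delta_\mu$ on the dual side follows by the same computation read in the other order (or by applying the anti-automorphism $\ast$ as in Lemma \ref{lm43}).

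The first step is to establish that $z_\mu T_{w_{\mu'}} z_\mu$ is indeed a scalar multiple of $z_\mu$. This is a standard ``homomorphism space is one-dimensional'' argument: $\End_{\HH_v(\BS_a)}(S^\mu)$ is one-dimensional when $\HH_v(\BS_a)$ is semisimple (which holds over the field of fractions $\mathbb{Q}(v)$ of $\mathcal{A}$), so the composite $\delta_\mu\theta_\mu$, being an endomorphism of $S^\mu$, is a scalar; since everything is defined over $\mathcal{A}=\Z[v,v^{-1}]$ and $S^\mu$ is $\mathcal{A}$-free, that scalar lies in $\mathcal{A}$. This gives the existence of the Laurent polynomial; the content of the lemma is its identification.

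The second and main step is to pin down the scalar. I would do this by applying the trace form $\tr$ (the form from \cite[p.~697]{Ma1}, as set up just before Definition \ref{df41}). Apply $\tr$ to both sides of $z_\mu T_{w_{\mu'}} z_\mu = c\, z_\mu$: the right side gives $c\cdot\tr(z_\mu)$, and since $S^\mu$ has a basis of standard tableaux with $z_\mu$ in a known position, $\tr(z_\mu)$ is computable. The left side, $\tr(z_\mu T_{w_{\mu'}} z_\mu)$, can be rewritten using cyclicity of the trace and the defining property of the Schur element: $s_\mu(v)$ is precisely the constant such that $\tr$ restricted to the block of $S^\mu$ equals $s_\mu(v)^{-1}$ times the ordinary character, so pairing $z_\mu$-type elements produces the factor $s_\mu(v)$ after clearing the normalization. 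The power $v^{\ell(w_\mu)}$ enters from the relation $x_\mu T_{w_\mu} y_{\mu'}$ versus $y_{\mu'}T_{w_{\mu'}}x_\mu$ — moving $T_{w_\mu}$ past the Young symmetrizers contributes $v^{\ell(w_\mu)}$, exactly as in the Dipper--James computation (\cite[(3.5)]{DJ2}) relating the Specht and dual Specht generators. I expect the bookkeeping of these $v$-powers, together with getting the normalization of the trace form exactly right, to be the main obstacle; the structural input (one-dimensionality of the Hom space, semisimplicity over $\mathbb{Q}(v)$, integrality over $\mathcal{A}$) is routine. Once the scalar is identified over $\mathbb{Q}(v)$ it holds over $\mathcal{A}$ by integrality, and the final displayed identity $z_\mu T_{w_{\mu'}} z_\mu = v^{\ell(w_\mu)}s_\mu(v) z_\mu$ is then just the statement that $\delta_\mu\theta_\mu$ acts as this scalar on the generator.
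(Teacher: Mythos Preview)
Your overall strategy matches the paper's: first use one-dimensionality of $\End_{\HH_v(\BS_a)}(S^\mu)$ over $\mathbb{Q}(v)$ to see that $\delta_\mu\theta_\mu$ is a scalar $r_\mu(v)\in\mathcal{A}$, then identify that scalar via the trace form and the Schur element. The paper carries this out essentially as you describe.

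There is one technical point where your plan diverges from the paper and would need adjustment. You propose to apply $\tr$ directly to $z_\mu T_{w_{\mu'}}z_\mu = c\,z_\mu$ and compute $\tr(z_\mu)$ on the right. But $\tr(z_\mu)=\tr(x_\mu T_{w_\mu}y_{\mu'})$ is not obviously nonzero or easy to evaluate for general $\mu$, so this equation may not determine $c$. The paper avoids this by instead passing to the element $z_\mu T_{w_{\mu'}}$: from $z_\mu T_{w_{\mu'}}z_\mu = r_\mu(v)z_\mu$ one sees that $r_\mu(v)^{-1}z_\mu T_{w_{\mu'}}$ is an idempotent in $\HH_v(\BS_a)\otimes\mathbb{Q}(v)$ generating $S^\mu\otimes\mathbb{Q}(v)$, and then invokes \cite[(1.6)]{Ma1}, which says $s_\mu(v)=1/\tr(e)$ for such a primitive idempotent $e$. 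The value $\tr(z_\mu T_{w_{\mu'}})=v^{\ell(w_\mu)}$ is then read off from \cite[(5.9)]{Ma1}, giving $s_\mu(v)=v^{-\ell(w_\mu)}r_\mu(v)$. (The paper also inserts a short trace computation to check $r_\mu(v)\neq 0$ before inverting it.) So your ``cyclicity plus Schur element'' idea is right, but it should be applied to the idempotent $r_\mu(v)^{-1}z_\mu T_{w_{\mu'}}$ rather than to $z_\mu$ itself.
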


\begin{proof} By \cite[(4.1)]{DJ1}, we have that $$
x_{\mu}\bigl(T_{w_{\mu}}y_{\mu'}T_{w_{\mu'}}x_{\mu}T_{w_{\mu}}\bigr)y_{\mu'}
=r_{\mu}(v)x_{\mu}T_{w_{\mu}}y_{\mu'},
$$
for some $r_{\mu}(v)\in {\mathcal A}$. Therefore, for any $h\in\HH_v(\BS_a)$,
$$\begin{aligned}
\delta_{\mu}\theta_{\mu}(z_{\mu}h)&=x_{\mu}T_{w_{\mu}}y_{\mu'}T_{w_{\mu'}}z_{\mu}h
=z_{\mu}T_{w_{\mu'}}z_{\mu}h\\
&=x_{\mu}\bigl(T_{w_{\mu}}y_{\mu'}T_{w_{\mu'}}x_{\mu}T_{w_{\mu}}\bigr)y_{\mu'}h
=r_{\mu}(v)x_{\mu}T_{w_{\mu}}y_{\mu'}h=r_{\mu}(v)z_{\mu}h,\end{aligned}
$$

We claim that $r_{\mu}(v)\neq 0$. In fact, by \cite[(5.9)]{Ma1},
$$\begin{aligned}
\tr\bigl(\delta_{\mu}(y_{\mu'}T_{w_{\mu'}}x_{\mu})\bigr)&=\tr\bigl(x_{\mu}T_{w_{\mu}}y_{\mu'}T_{w_{\mu'}}x_{\mu}\bigr)
=\tr\bigl(x_{\mu}^2T_{w_{\mu}}y_{\mu'}T_{w_{\mu'}}\bigr)\\
&=\biggl(\sum_{w\in\BS_{\mu}}v^{\ell(w)}\biggr)\tr(x_{\mu}T_{w_{\mu}}y_{\mu'}T_{w_{\mu'}})
=v^{\ell(w_{\mu})}\sum_{w\in\BS_{\mu}}v^{\ell(w)}\neq 0.
\end{aligned}
$$
It follows that $\delta_{\mu}(y_{\mu'}T_{w_{\mu'}}x_{\mu})\neq 0$, hence
$(\delta_{\mu})_{\mathbb{Q}(v)}$ is an
$\HH_{v}(\BS_a)\otimes_{\mathcal A}\mathbb{Q}(v)$-module isomorphism between two simple
$\bigl(\HH_{v}(\BS_a)\otimes_{\mathcal A}\mathbb{Q}(v)\bigr)$-modules. By similar argument, one can
show that $(\theta_{\mu})_{\mathbb{Q}(v)}$ is an
$\bigl(\HH_{v}(\BS_a)\otimes_{\mathcal A}\mathbb{Q}(v)\bigr)$-module isomorphism between two simple
$\bigl(\HH_{v}(\BS_a)\otimes_{\mathcal A}\mathbb{Q}(v)\bigr)$-modules. In particular,
this implies that $r_{\mu}(v)\neq 0$, as required.

It remains to show $r_{\mu}(v)=v^{\ell(w_{\mu})}s_{\mu}(v)$.
Since $r_{\mu}(v)\neq 0$,
$r_{\mu}(v)^{-1}z_{\mu}T_{w_{\mu'}}$ is an idempotent in
$\HH_{v}(\BS_a)\otimes_{\mathcal A}\mathbb{Q}(v)$. Clearly
$$S^{\mu}\otimes_{\mathcal{A}}\mathbb{Q}(v)=r_{\mu}(v)^{-1}z_{\mu}T_{w_{\mu'}}\bigl(\HH_{v}(\BS_a)
\otimes_{\mathcal A}\mathbb{Q}(v)\bigr).$$
Applying \cite[(1.6),(5.9)]{Ma1}, we get that $$
s_{\mu}(v)=\frac{1}{\tr\Bigl(r_{\mu}(v)^{-1}z_{\mu}T_{w_{\mu'}}\Bigr)}=
\frac{r_{\mu}(v)}{\tr\Bigl(z_{\mu}T_{w_{\mu'}}\Bigr)}=
v^{-\ell(w_{\mu})}r_{\mu}(v).
$$
Clearly $\delta_{\mu}\theta_{\mu}=v^{\ell(w_{\mu})}s_{\mu}(v)$
implies that
$\theta_{\mu}\delta_{\mu}=v^{\ell(w_{\mu})}s_{\mu}(v)$. This
completes the proof of the theorem.
\end{proof}

\begin{theorem} \label{main2} With the above notations, we have that
$$f_{\ulam}(v,\tilde{v})=v^{\frac{n(n-1)}{2}}\tilde{v}^{n-a}\frac{s_{\ulam}(v,\tilde{v})}{s_{\lam^{(1)}}(v)
s_{\lam^{(2)}}(v)}\in\widetilde{\mathcal{A}},$$ where
$s_{\ulam}(v,\tilde{v})$, (resp., $s_{\lam^{(1)}}(v)$, $
s_{\lam^{(2)}}(v)$) is the Schur element associated to the
bipartition $\ulam$ (resp., the partitions $\lam^{(1)}$,
$\lam^{(2)}$). In particular, in the ring $\widetilde{\mathcal{A}}$,
$$s_{\lam^{(1)}}(v)s_{\lam^{(2)}}(v)\bigm|s_{\ulam}(v,\tilde{v}). $$
\end{theorem}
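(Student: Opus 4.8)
The plan is to compute $f_{\ulam}(v,\tilde{v})$ by the ``idempotent and trace'' device already used in the proof of Lemma~\ref{KMthm}, now applied to the two-parameter Hecke algebra $\HH_{v,\tilde{v}}(B_n)$ and carried out over the field $\mathbb{Q}(v,\tilde{v})$, where this algebra is split semisimple. The underlying principle, coming from \cite[(1.6),(5.9)]{Ma1}, is the following: if $\Phi\in\HH_{v,\tilde{v}}(B_n)\otimes_{\widetilde{\mathcal{A}}}\mathbb{Q}(v,\tilde{v})$ satisfies $\Phi^2=c\Phi$ for some nonzero scalar $c$ and the right ideal $\Phi\HH_{v,\tilde{v}}(B_n)$ affords the Specht module $S^{\ulam}$, then $c^{-1}\Phi$ is a primitive idempotent cutting out $S^{\ulam}$, whence $s_{\ulam}(v,\tilde{v})=c/\tr(\Phi)$.

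First I would produce such an element $\Phi_{\ulam}$ by ``closing up'' the generator $g_{\ulam}:=u_{n-a}^{-}h_{n-a,a}u_{a}^{+}z_{\ulam}$ of $S^{\ulam}$ in two superposed layers. The inner, type~$A$, layer uses $T_{\ulam'}:=T_{w_{\lam^{(1)'}}}\widehat{T_{w_{\lam^{(2)'}}}}$: applying Lemma~\ref{KMthm} separately inside the two commuting Young factors $\HH_v(\BS_a)$ and $\HH_v(\BS_{\{a+1,\cdots,n\}})$ gives
$$z_{\ulam}T_{\ulam'}z_{\ulam}=v^{\ell(w_{\lam^{(1)}})+\ell(w_{\lam^{(2)}})}\,s_{\lam^{(1)}}(v)\,s_{\lam^{(2)}}(v)\,z_{\ulam}.$$
The outer, type~$B$, layer uses the homomorphisms $\theta_{\ulam},\delta_{\ulam}$: by Lemma~\ref{lm44} the composite $\delta_{\ulam}\theta_{\ulam}$, that is, left multiplication by $u_{n-a}^{-}h_{n-a,a}$ following left multiplication by $u_{a}^{+}h_{a,n-a}$, contributes exactly the factor $f_{\ulam}(v,\tilde{v})$, and the compatibility between the two layers is precisely what (\ref{eq42}) and Lemma~\ref{lm43} provide. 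Assembling these, I expect to obtain a suitable closure $\Phi_{\ulam}$ of $g_{\ulam}$, built from $g_{\ulam}$, $T_{\ulam'}$ and the reversing word $u_{a}^{+}h_{a,n-a}$, with $\Phi_{\ulam}^2=c_{\ulam}\Phi_{\ulam}$, where, up to an explicit unit monomial in $v$ and $\tilde{v}$,
$$c_{\ulam}=f_{\ulam}(v,\tilde{v})\cdot v^{\ell(w_{\lam^{(1)}})+\ell(w_{\lam^{(2)}})}\,s_{\lam^{(1)}}(v)\,s_{\lam^{(2)}}(v),$$
and with $\Phi_{\ulam}\HH_{v,\tilde{v}}(B_n)\cong S^{\ulam}$ over $\mathbb{Q}(v,\tilde{v})$; the last isomorphism is checked, just as in Lemma~\ref{KMthm}, by showing that an appropriate trace does not vanish, so that all the maps in play are isomorphisms of simple modules in the semisimple setting.

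Next I would evaluate $\tr(\Phi_{\ulam})$ directly. Using the explicit formula for the symmetrizing form on $\HH_{v,\tilde{v}}(B_n)$ from \cite[(5.9)]{Ma1}, the identity $\tr(z_\mu T_{w_{\mu'}})=v^{\ell(w_\mu)}$ from the proof of Lemma~\ref{KMthm}, and the fact that each factor $\tilde{v}v^{i-1}-T_{i-1}\cdots T_1T_0T_1\cdots T_{i-1}$ of $u_{n-a}^{-}$ feeds one power of $\tilde{v}$ into the trace, I expect $\tr(\Phi_{\ulam})$ to come out as a single monomial $v^{M}\tilde{v}^{n-a}$ for an explicit exponent $M$. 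Then $s_{\ulam}(v,\tilde{v})=c_{\ulam}/\tr(\Phi_{\ulam})$, and after solving for $f_{\ulam}(v,\tilde{v})$ and using the standard identities among the length statistics attached to $\lam^{(1)},\lam^{(2)}$ and $\ulam$ to see that the accumulated power of $v$ equals $v^{n(n-1)/2}$, one obtains the claimed formula. Finally, the divisibility $s_{\lam^{(1)}}(v)s_{\lam^{(2)}}(v)\mid s_{\ulam}(v,\tilde{v})$ in $\widetilde{\mathcal{A}}$ follows at once, since $f_{\ulam}(v,\tilde{v})\in\widetilde{\mathcal{A}}$ by Lemma~\ref{lm44} while $v^{n(n-1)/2}\tilde{v}^{n-a}$ is a unit of $\widetilde{\mathcal{A}}$.

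The hard part will be the middle step: pinning down the exact reversing word in the layer built from the $u_k^{\pm}$ and the $h_{a,b}$ so that $\Phi_{\ulam}$ really is idempotent up to a scalar, keeping precise track of how the one-dimensional factors $u_k^{\pm}$ commute past $h_{a,b}$ and $z_{\ulam}$ (which is where (\ref{eq42}) and Lemma~\ref{lm43} do the work), and then carrying the monomial computation of $\tr(\Phi_{\ulam})$ and the accompanying length bookkeeping through accurately enough to land on the exponents $n(n-1)/2$ and $n-a$ exactly.
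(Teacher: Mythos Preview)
Your proposal is correct and is essentially the paper's own argument: the paper takes $\Phi_{\ulam}=u_{n-a}^{-}h_{n-a,a}u_{a}^{+}z_{\ulam}\,h_{a,n-a}T_{w_{\widehat{\ulam'}}}$ (so the ``closing word'' is just $h_{a,n-a}T_{w_{\widehat{\ulam'}}}$, with no extra $u_a^{+}$), verifies $\Phi_{\ulam}^2=c_{\ulam}\Phi_{\ulam}$ with $c_{\ulam}=v^{\ell(w_{\lam^{(1)}})+\ell(w_{\lam^{(2)}})}f_{\ulam}(v,\tilde v)\,s_{\lam^{(1)}}(v)s_{\lam^{(2)}}(v)$ exactly via your two layers, and then evaluates $\tr(\Phi_{\ulam})$ using \cite[(5.9)]{Ma1} (with $Q_1=-1$, $Q_2=\tilde v$, $\mu=(\lam^{(2)},\lam^{(1)})$) to obtain the monomial $v^{n(n-1)/2+\ell(w_{\lam^{(1)}})+\ell(w_{\lam^{(2)}})}\tilde v^{\,n-a}$.
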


\begin{proof} For an $a$-bipartition $\ulam=(\lam^{(1)},\lam^{(2)})$, we define
$T_{w_{\ulam}}=T_{w_{\lam^{(1)}}}\widehat{T_{w_{\lam^{(2)}}}}$. Then
$T_{w_{\widehat{\ulam}}}=T_{w_{\lam^{(2)}}}\widehat{T_{w_{\lam^{(1)}}}}$. Now applying Lemma
\ref{KMthm}, we get that $$\begin{aligned}
z_{\widehat\ulam}T_{w_{\widehat{\ulam'}}}z_{\widehat\ulam}&=
z_{\lam^{(2)}}T_{w_{\lam^{(2)'}}}z_{\lam^{(2)}}\widehat{z_{\lam^{(1)}}}\widehat{T_{w_{\lam^{(1)'}}}}
\widehat{z_{\lam^{(1)}}}\\
&=v^{\ell(w_{\lam^{(2)}})+\ell(w_{\lam^{(1)}})}s_{\lam^{(2)}}(v)s_{\lam^{(1)}}(v)
z_{\lam^{(2)}}\widehat{z_{\lam^{(1)}}}\\
&=v^{\ell(w_{\lam^{(2)}})+\ell(w_{\lam^{(1)}})}s_{\lam^{(2)}}(v)s_{\lam^{(1)}}(v)
z_{\widehat\ulam},\end{aligned}
$$
where $s_{\lam^{(1)}}(v)$, $ s_{\lam^{(2)}}(v)$ are the Schur
elements associated to the partitions $\lam^{(1)}$, $\lam^{(2)}$
respectively.

Let
$A(v,\tilde{v}):=v^{-\ell(w_{\lam^{(2)}})-\ell(w_{\lam^{(1)}})}f_{\ulam}(v,\tilde{v})^{-1}s_{\lam^{(2)}}(v)^{-1}
s_{\lam^{(1)}}(v)^{-1}$. Then we have $$\begin{aligned}
&\quad\,\,\Bigl(A(v,\tilde{v})u_{n-a}^{-}h_{n-a,a}u_{a}^{+}z_{\ulam}h_{a,n-a}T_{w_{\widehat{\ulam'}}}\Bigr)^2\\
&=\Bigl(A(v,\tilde{v})u_{n-a}^{-}h_{n-a,a}u_{a}^{+}h_{a,n-a}z_{\widehat\ulam}T_{w_{\widehat{\ulam'}}}\Bigr)^2\\
&=A(v,\tilde{v})^2u_{n-a}^{-}h_{n-a,a}u_{a}^{+}h_{a,n-a}z_{\widehat\ulam}T_{w_{\widehat{\ulam'}}}
u_{n-a}^{-}h_{n-a,a}u_{a}^{+}h_{a,n-a}z_{\widehat\ulam}T_{w_{\widehat{\ulam'}}}\\
&=A(v,\tilde{v})^2u_{n-a}^{-}h_{n-a,a}u_{a}^{+}h_{a,n-a}
u_{n-a}^{-}h_{n-a,a}u_{a}^{+}h_{a,n-a}z_{\widehat\ulam}T_{w_{\widehat{\ulam'}}}z_{\widehat\ulam}T_{w_{\widehat{\ulam'}}}
\\
&=A(v,\tilde{v})f_{\ulam}(v,\tilde{v})^{-1}u_{n-a}^{-}h_{n-a,a}u_{a}^{+}h_{a,n-a}
u_{n-a}^{-}h_{n-a,a}u_{a}^{+}h_{a,n-a}z_{\widehat\ulam}T_{w_{\widehat{\ulam'}}}\\
&=A(v,\tilde{v})f_{\ulam}(v,\tilde{v})^{-1}u_{n-a}^{-}h_{n-a,a}u_{a}^{+}z_{a,n-a}h_{a,n-a}z_{\widehat\ulam}
T_{w_{\widehat{\ulam'}}}\\
&=A(v,\tilde{v})f_{\ulam}(v,\tilde{v})^{-1}u_{n-a}^{-}h_{n-a,a}u_{a}^{+}z_{a,n-a}z_{\ulam}h_{a,n-a}
T_{w_{\widehat{\ulam'}}}\\
&=A(v,\tilde{v})u_{n-a}^{-}h_{n-a,a}u_{a}^{+}z_{\ulam}h_{a,n-a}T_{w_{\widehat{\ulam'}}}.
\end{aligned}
$$
In other words,
$A(v,\tilde{v})u_{n-a}^{-}h_{n-a,a}u_{a}^{+}z_{\ulam}h_{a,n-a}T_{w_{\widehat{\ulam'}}}$
is an idempotent of the Hecke algebra
$\HH_{v,\tilde{v}}(B_n)\otimes_{\widetilde{\mathcal{A}}}\mathbb{Q}(v,\tilde{v})$. Since $$
S^{\ulam}\otimes_{\widetilde{\mathcal{A}}}\mathbb{Q}(v,\tilde{v})=
A(v,\tilde{v})u_{n-a}^{-}h_{n-a,a}u_{a}^{+}z_{\ulam}h_{a,n-a}T_{w_{\widehat{\ulam'}}}
\bigl(\HH_{v,\tilde{v}}(B_n)\otimes_{\widetilde{\mathcal{A}}}\mathbb{Q}(v,\tilde{v})\bigr),
$$
it follows from \cite[(1.6)]{Ma1} that $$
s_{\ulam}(v,\tilde{v})=\frac{1}{\tr\bigl(A(v,\tilde{v})u_{n-a}^{-}h_{n-a,a}u_{a}^{+}
z_{\ulam}h_{a,n-a}T_{w_{\widehat{\ulam'}}}\bigr)},
$$
where $s_{\ulam}(v,\tilde{v})$ is the Schur element associated to
the bipartition $\ulam$. On the other hand, if we set $q=v,
Q_1=-1, Q_2=\tilde{v}$, and $\mu=(\lam^{(2)},\lam^{(1)})$, then the element $m_{\mu}$ in \cite{Ma1}
is $$x_{\lam^{(2)}}\widehat{x_{\lam^{(1)}}}\biggl(\prod_{k=1}^{n-a}(-v^{1-k})\biggr)u_{n-a}^{-}$$ in our
notation, and the element $n_{\mu'}$ in \cite{Ma1} is
$$y_{\lam^{(1)'}}\widehat{y_{\lam^{(2)'}}}\biggl(\prod_{k=1}^{a}v^{1-k}\biggr))u_{a}^{+}$$ in our notation.
Note also that the element $w_{\mu'}$ in \cite[(5.9)]{Ma1} is just
$$w_{a,n-a}w_{\lam^{(2)'}}\widehat{w_{\lam^{(1)'}}}$$ in our notation.
Therefore, the element $z_{\mu}T_{w_{\mu'}}$ in the notation of \cite[(5.9)]{Ma1} is just $$
\Bigl(\prod_{k=1}^{n-a}(-v^{1-k})\Bigr)\Bigl(\prod_{k=1}^{a}v^{1-k}\Bigr)
u_{n-a}^{-}h_{n-a,a}u_{a}^{+}z_{\ulam}h_{a,n-a}T_{w_{\widehat{\ulam'}}}
$$
in our notations. Applying \cite[(5.9)]{Ma1}, we have
$$\begin{aligned}
&\quad\,\,\tr\biggl(\Bigl(\prod_{k=1}^{n-a}(-v^{1-k})\Bigr)\Bigl(\prod_{k=1}^{a}v^{1-k}\Bigr)
u_{n-a}^{-}h_{n-a,a}u_{a}^{+}z_{\ulam}h_{a,n-a}T_{w_{\widehat{\ulam'}}}\biggr)\\
&=(-1)^nv^{\ell(w_{a,n-a})+\ell(w_{\lam^{(2)}})
+\ell(w_{\lam^{(1)}})}(-1)^{a}\tilde{v}^{n-a}.\end{aligned}
$$
It follows that $$ \tr\Bigl(u_{n-a}^{-}h_{n-a,a}u_{a}^{+}
z_{\ulam}h_{a,n-a}T_{w_{\widehat{\ulam'}}}\Bigr)=v^{\frac{n(n-1)}{2}+\ell(w_{\lam^{(2)}})
+\ell(w_{\lam^{(1)}})}\tilde{v}^{n-a}.
$$
Hence, $$
f_{\ulam}(v,\tilde{v})=v^{\frac{n(n-1)}{2}}\tilde{v}^{n-a}
\frac{s_{\ulam}(v,\tilde{v})}{s_{\lam^{(1)}}(v)s_{\lam^{(2)}}(v)}\in\widetilde{\mathcal A},
$$
as required.
\end{proof}

\begin{corollary}  With the above notations, we have that
 $$
f_{\ulam}(v)=v^{\frac{n(n-1)}{2}}\frac{s_{\ulam}(v,1)}{s_{\lam^{(1)}}(v)s_{\lam^{(2)}}(v)}.
$$
\end{corollary}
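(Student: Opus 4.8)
The plan is to obtain the Corollary as an immediate specialization of Theorem \ref{main2}. Recall from the beginning of Section 4 that $f_{\ulam}(v)=f_{\ulam}(v,1)$: the two-parameter algebra $\HH_{v,\tilde v}(B_n)$ specializes at $\tilde v=1$ to the Hecke algebra of type $B_n$ with parameters $\{v,1\}$, and under this specialization the central element $z_{a,n-a}$, the element $z_{\ulam}$, and the defining scalar relation $z_{a,n-a}z_{\ulam}=f_{\ulam}(v,\tilde v)z_{\ulam}$ of Lemma \ref{lm44} all reduce to the corresponding one-parameter data of \cite[Lemma 3.2]{Hu1}, so the scalar by which the specialized central element acts on the specialized dual Specht module is precisely $f_{\ulam}(v)$.

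First I would note that both sides of the identity in Theorem \ref{main2} lie in $\widetilde{\mathcal A}=\Z[v,v^{-1},\tilde v,\tilde v^{-1}]$, so the $\Z[v,v^{-1}]$-algebra homomorphism $\widetilde{\mathcal A}\to\mathcal A$ sending $\tilde v\mapsto 1$ may be applied to it. Under this homomorphism the factor $\tilde v^{n-a}$ becomes $1$, the Schur element $s_{\ulam}(v,\tilde v)$ becomes $s_{\ulam}(v,1)$, and the denominator $s_{\lam^{(1)}}(v)s_{\lam^{(2)}}(v)$ is unaffected since it involves only $v$; hence
\[
f_{\ulam}(v,1)=v^{\frac{n(n-1)}{2}}\frac{s_{\ulam}(v,1)}{s_{\lam^{(1)}}(v)s_{\lam^{(2)}}(v)}.
\]
Combining this with $f_{\ulam}(v)=f_{\ulam}(v,1)$ yields the stated formula. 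There is no genuine obstacle here: the only point requiring care is the compatibility of the (dual) Specht-module and central-element constructions with the specialization $\tilde v\mapsto 1$, and this is already built into the framework set up at the start of Section 4, where $f_{\ulam}(v,\tilde v)$ was introduced precisely as the two-parameter refinement of $f_{\ulam}(v)$.
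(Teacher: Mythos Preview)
Your proposal is correct and is precisely the argument the paper intends: the Corollary is stated without proof immediately after Theorem \ref{main2}, and follows by specializing $\tilde v\mapsto 1$ in that theorem together with the identity $f_{\ulam}(v)=f_{\ulam}(v,1)$ recorded at the start of Section 4.
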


\bigskip\bigskip\smallskip
\section{Twining character formulae}

The purpose of this final section is to derive a closed formula of the decomposition number
$\bigl[\widetilde{\Delta}_{\lam,\lam}^{+}:\widetilde{L}_{\mu,\mu}^{+}\bigr]$ (where $\lam,\mu\in\Lambda^{+}(m)$) in
terms of the elements $f_{(\lam,\lam)}(q), f_{(\mu,\mu)}(q)$, and thus proving our main result Theorem \ref{mainthm}.
\smallskip

Let $M$ be an arbitrary finite dimensional $\widetilde{A}(m)$-module. Then $M\theta\subseteq M$. For each
$\mu\in\Lambda(m)$, it follows from the definition that $$
\theta\phi_{\mu,\mu}^{1}=\widehat{\phi}_{\mu,\mu}^{1}\theta,\quad \phi_{\mu,\mu}^{1}\theta=
\theta\widehat{\phi}_{\mu,\mu}^{1}.
$$
Let $M_{(\mu,\mu)}$ be the $(\mu,\mu)$-weight space of the
$\widetilde{A}(m)$-module $M$. Note that for any $x\in M$, $x\in
M_{(\mu,\mu)}$ if and only
$x\phi_{\mu,\mu}^{1}=x=x\widehat{\phi}_{\mu,\mu}^{1}$. Then, it is
easy to verify that $M_{(\mu,\mu)}\theta\subseteq M_{(\mu,\mu)}$.
Therefore, it makes sense to talk about the trace of $\theta$ on
$M_{(\mu,\mu)}$.\smallskip

We define the twining character of the $\widetilde{A}(m)$-module $M$
as follows: $$ \chf^{\theta}(M):=\sum_{\mu\in\Lambda(m)}\Tr(\theta,
M_{(\mu,\mu)})e^{\mu}\in K[\Lambda(m)],
$$
where $K[\Lambda(m)]$ denotes the group algebra of the additive
group $\Lambda(m)$. It is easy to see that $\chf^{\theta}$ induces a
homomorphism from the Grothendieck group $\mathcal{R}(\widetilde{A}(m))$ to
$K[\Lambda(m)]$. We denote this homomorphism again by $\chf^{\theta}$. We use
$\pi$ to denote the natural map from the group ring $\Z[\Lambda(m)]$
to the group algebra $K[\Lambda(m)]$.\smallskip

\begin{lemma} \label{lm51} For any $\lam,\mu\in\Lambda^{+}(m)$ with $\lam\neq\mu$, we have that
$$\begin{aligned}
&\chf^{\theta}\bigl(\widetilde{\Delta}_{\lam,\lam}^{+}\bigr)=-\chf^{\theta}\bigl(\widetilde{\Delta}_{\lam,\lam}^{-}\bigr)
=
\sqrt{f_{(\lam,\lam)}(q)}\pi\bigl(\chf\Delta_{\lam}\bigr),\\
&\chf^{\theta}\bigl(\widetilde{L}_{\lam,\lam}^{+}\bigr)=-\chf^{\theta}\bigl(\widetilde{L}_{\lam,\lam}^{-}\bigr)=
\sqrt{f_{(\lam,\lam)}(q)}\pi\bigl(\chf
L_{\lam}\bigr),\\
&\chf^{\theta}\bigl(\widetilde{L}_{\lam,\mu}\bigr)=0,
\end{aligned}
$$ where $\chf\Delta_{\lam}, \chf L_{\lam}$ denote the formal
characters of the Weyl module and irreducible module associated to
$\lam$ of the $q$-Schur algebra $S_q(m)$ respectively.
\end{lemma}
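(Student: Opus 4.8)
The plan is to reduce the computation of each $\chf^{\theta}$ to the trace of $\theta$ on a single diagonal weight space. Write $v_0=Z_\lam\otimes\widehat{Z}_\lam\in\Delta_{\lam,\lam}$. As right $S_q^{m,m}$-modules one has $\widetilde{\Delta}^{\pm}_{\lam,\lam}\cong\Delta_{\lam,\lam}=\Delta_\lam\otimes\widehat{\Delta}_\lam$ and $\widetilde{L}^{\pm}_{\lam,\lam}\cong L_{\lam,\lam}=L_\lam\otimes\widehat{L}_\lam$ (by their explicit descriptions, or by Corollary \ref{cor317}); since weight spaces are intrinsic, it is enough to compute the trace of the operator $R_\theta$ of right multiplication by $\theta$ on each diagonal weight space. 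Two facts drive the argument. First, by Lemma \ref{lm37} we have $\theta s=\widehat{s}\,\theta$ for all $s\in S_q^{m,m}$, so $R_\theta$ is a \emph{twining operator}: $R_\theta(xs)=R_\theta(x)\widehat{s}$. Second, $R_\theta^{2}$ is right multiplication by $\theta^{2}=Z_{m,m}$ (Lemma \ref{lm36}), which is central in $S_q^{m,m}$ and, since $v_0 Z_{m,m}=f_{(\lam,\lam)}(q)v_0$, acts on $\Delta_{\lam,\lam}$ and on $L_{\lam,\lam}$ as the scalar $f_{(\lam,\lam)}(q)$ (invertible in $K$ by (2.3.1)).

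Next I would exhibit an explicit twining operator. Let $\kappa\colon\Delta_\lam\to\widehat{\Delta}_\lam$ be the bijection obtained by restricting the algebra isomorphism $\widehat{\null}\colon S_q(m)\to\widehat{S}_q(m)$; it is semilinear, $\kappa(ux)=\kappa(u)\widehat{x}$, it sends $Z_\lam$ to $\widehat{Z}_\lam$, and it intertwines the weight-space idempotents, hence restricts to isomorphisms $(\Delta_\lam)_\mu\cong(\widehat{\Delta}_\lam)_\mu$ for every $\mu$. Define $\psi\colon\Delta_{\lam,\lam}\to\Delta_{\lam,\lam}$ by $\psi(u\otimes w)=\kappa^{-1}(w)\otimes\kappa(u)$. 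Using that the automorphism $\widehat{\null}$ of $S_q^{m,m}=S_q(m)\otimes\widehat{S}_q(m)$ interchanges the two tensor factors (each composed with $\widehat{\null}$), one checks at once that $\psi(xs)=\psi(x)\widehat{s}$, that $\psi^{2}=\id$, and that $\psi(v_0)=v_0$. Now, writing a general element of $\widetilde{\Delta}^{+}_{\lam,\lam}$ in the form $(\sqrt{f_{(\lam,\lam)}(q)}\,v_0+v_0\theta)s$ and using $\theta s=\widehat{s}\theta$ and $\psi(v_0)=v_0$, one finds that $a\mapsto\sqrt{f_{(\lam,\lam)}(q)}\,a+\psi(a)\theta$ is an $S_q^{m,m}$-module isomorphism $\Delta_{\lam,\lam}\to\widetilde{\Delta}^{+}_{\lam,\lam}$, and a short computation (using $\psi^{2}=\id$ and $\theta^{2}=Z_{m,m}$) shows that under it $R_\theta$ corresponds to the operator $\sqrt{f_{(\lam,\lam)}(q)}\,\psi$ on $\Delta_{\lam,\lam}$. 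The same computation with the opposite sign gives $-\sqrt{f_{(\lam,\lam)}(q)}\,\psi$ on $\widetilde{\Delta}^{-}_{\lam,\lam}$, and verbatim with $L_\lam,\overline{Z}_\lam$ replacing $\Delta_\lam,Z_\lam$ it gives $\pm\sqrt{f_{(\lam,\lam)}(q)}\,\psi$ on $\widetilde{L}^{\pm}_{\lam,\lam}$. Conceptually this identification is forced: the twining operators on $\Delta_{\lam,\lam}$ form the space $\Hom_{S_q^{m,m}}(\Delta_{\lam,\lam},(\Delta_{\lam,\lam})^{\theta})\cong\End_{S_q^{m,m}}(\Delta_{\lam,\lam})=K$, because Weyl modules of $q$-Schur algebras, and hence their tensor products, have endomorphism ring $K$; so $R_\theta$ is a scalar multiple of $\psi$, the scalar squares to $f_{(\lam,\lam)}(q)$, and its sign is pinned down by Corollary \ref{cor317}.

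It then remains to compute $\Tr(\psi,(\Delta_{\lam,\lam})_{(\mu,\mu)})$. The $(\mu,\mu)$-weight space of $\Delta_\lam\otimes\widehat{\Delta}_\lam$ is $(\Delta_\lam)_\mu\otimes(\widehat{\Delta}_\lam)_\mu=(\Delta_\lam)_\mu\otimes\kappa((\Delta_\lam)_\mu)$; choosing a basis $u_1,\dots,u_d$ of $(\Delta_\lam)_\mu$ (so $d=\dim(\Delta_\lam)_\mu$), the operator $\psi$ permutes the basis $\{u_i\otimes\kappa(u_j)\}$ by $u_i\otimes\kappa(u_j)\mapsto u_j\otimes\kappa(u_i)$, with fixed points exactly those with $i=j$, so $\Tr(\psi,(\Delta_{\lam,\lam})_{(\mu,\mu)})=d$. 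Hence $\chf^{\theta}(\widetilde{\Delta}^{+}_{\lam,\lam})=\sum_\mu\sqrt{f_{(\lam,\lam)}(q)}\dim(\Delta_\lam)_\mu\,e^{\mu}=\sqrt{f_{(\lam,\lam)}(q)}\,\pi(\chf\Delta_\lam)$, and likewise $\chf^{\theta}(\widetilde{L}^{+}_{\lam,\lam})=\sqrt{f_{(\lam,\lam)}(q)}\,\pi(\chf L_\lam)$; the minus versions follow by replacing $\psi$ with $-\psi$. For $\widetilde{L}_{\lam,\mu}$ with $\lam\neq\mu$, the $\Z/2\Z$-Clifford structure $\widetilde{A}(m)=S_q^{m,m}\oplus\theta S_q^{m,m}$ gives $\widetilde{L}_{\lam,\mu}\!\downarrow_{S_q^{m,m}}\cong L_{\lam,\mu}\oplus(L_{\lam,\mu})^{\theta}\cong L_{\lam,\mu}\oplus L_{\mu,\lam}$, a direct sum of two non-isomorphic simple modules; since $R_\theta$ is a twining operator it interchanges these two summands, so on each weight space its matrix is block off-diagonal, whence $\Tr(\theta,(\widetilde{L}_{\lam,\mu})_{(\nu,\nu)})=0$ for every $\nu$ and $\chf^{\theta}(\widetilde{L}_{\lam,\mu})=0$.

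The step I expect to be the main obstacle is the identification $R_\theta=\pm\sqrt{f_{(\lam,\lam)}(q)}\,\psi$. It requires a sufficiently precise description of how $\widehat{\null}$ acts on the tensor product $S_q(m)\otimes\widehat{S}_q(m)$ in order to verify that $\psi$ is a twining operator with $\psi^{2}=\id$ and $\psi(v_0)=v_0$, together with either the direct unwinding of the generators of $\widetilde{\Delta}^{\pm}_{\lam,\lam}$ and $\widetilde{L}^{\pm}_{\lam,\lam}$ or the uniqueness statement $\End_{S_q^{m,m}}(\Delta_{\lam,\lam})=\End_{S_q^{m,m}}(L_{\lam,\lam})=K$. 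The remaining ingredients---the weight-space identifications and the trace of the flip $\psi$---are routine.
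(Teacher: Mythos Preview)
Your proof is correct and follows essentially the same line as the paper's: both use Corollary~\ref{cor317} to see that on $\widetilde{\Delta}^{\pm}_{\lam,\lam}$ (resp.\ $\widetilde{L}^{\pm}_{\lam,\lam}$) the operator $\theta/\sqrt{f_{(\lam,\lam)}(q)}$ acts as the flip $v_\lam x_i\otimes\widehat{v}_\lam\widehat{x}_j\mapsto v_\lam x_j\otimes\widehat{v}_\lam\widehat{x}_i$, whose trace on a diagonal weight space is $\dim(\Delta_\lam)_\nu$, and both dispose of $\widetilde{L}_{\lam,\mu}$ by the block off-diagonal argument. Your explicit naming of $\psi$ and the uniqueness remark via $\End_{S_q^{m,m}}(\Delta_{\lam,\lam})=K$ are pleasant additions but not materially different from the paper's direct computation.
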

\begin{proof} Let $\nu\in\Lambda(m)$. By definition, $\widetilde{L}_{\lam,\mu}=(L_{\lam}\otimes L_{\mu})\oplus
(L_{\lam}\otimes L_{\mu})\theta$, and
the $(\nu,\nu)$-weight space of $\widetilde{L}_{\lam,\mu}$ is just $$
((L_{\lam})_{\nu}\otimes (L_{\mu})_{\nu})\oplus ((L_{\lam})_{\nu}\otimes (L_{\mu})_{\nu})\theta.
$$
It is easy to see that the trace of $\theta$ on this space is $0$. This proves $\chf^{\theta}
\bigl(\widetilde{L}_{\lam,\mu}\bigr)=0$.

Let $v_{\lam}$ (resp., $\widehat{v}_{\lam}$) be the highest weight vector of $\Delta_{\lam}$ (resp., of
$\widehat{\Delta}_{\lam}$).
By definition, we know that the $(\nu,\nu)$-weight space of $\widetilde{\Delta}_{\lam,\lam}^{+}$ is
$(\Delta_{\lam})_{\nu}\otimes
(\widehat{\Delta}_{\lam})_{\nu}$. Let $\bigl\{v_{\lam}x_i\bigr\}_{i=1}^{k}$ be a $K$-basis of $(\Delta_{\lam})_{\nu}$,
then $\bigl\{\widehat{v}_{\lam}\widehat{x}_i\bigr\}_{i=1}^{k}$ is a $K$-basis of $(\widehat{\Delta}_{\lam})_{\nu}$. We
now apply Corollary \ref{cor317}.
For any integer $1\leq i, j\leq k$, we have $$
\bigl(v_{\lam}x_i\otimes\widehat{v}_{\lam}\widehat{x}_j\bigr)\frac{\theta}{\sqrt{f_{(\lam,\lam)}(q)}}=
v_{\lam}x_j\otimes\widehat{v}_{\lam}\widehat{x}_i.
$$
It follows that $$
\Tr\Bigl(\theta, \bigl(\widetilde{\Delta}_{\lam,\lam}^{+}\bigr)_{(\nu,\nu)}\Bigr)=\sqrt{f_{(\lam,\lam)}(q)}
\dim\bigl(\Delta_{\lam}\bigr)_{\nu}.
$$
Hence $\chf^{\theta}\bigl(\widetilde{\Delta}_{\lam,\lam}^{+}\bigr)=\sqrt{f_{(\lam,\lam)}(q)}\pi\bigl(\chf
\Delta_{\lam}\bigr)$. The remaining equalities can be proved in a similar way. This completes the proof of the lemma.
\end{proof}

For any $\lam,\mu,\nu\in\Lambda^{+}(m)$ with $\mu\neq\nu$, we set $$ d_{\lam,\mu}:=\bigl[\Delta_{\lam}:L_{\mu}\bigr],
\quad
m_{\lam,\mu}:=\bigl[\widetilde{\Delta}_{\lam,\lam}^{+}:\widetilde{L}_{\mu,\mu}^{+}\bigr],\quad
m_{\mu,\nu}^{\lam}:=\bigl[\widetilde{\Delta}_{\lam,\lam}^{+}:\widetilde{L}_{\mu,\nu}\bigr].
$$
By (\ref{Res}), it is clear that $$
\bigl[\widetilde{\Delta}_{\lam,\lam}^{+}:\widetilde{L}_{\mu,\mu}^{-}\bigr]=d_{\lam,\mu}^2-m_{\lam,\mu}.
$$
Therefore, we have the following equality in the Grothendieck group
$\mathcal{R}(\widetilde{A}(m))$: $$
\bigl[\widetilde{\Delta}_{\lam,\lam}^{+}\bigr]=\sum_{\mu\in\Lambda^{+}(m)}\Bigl(m_{\lam,\mu}
\bigl[\widetilde{L}_{\mu,\mu}^{+}\bigr]+(d_{\lam,\mu}^2-m_{\lam,\mu})
\bigl[\widetilde{L}_{\mu,\mu}^{-}\bigr]\Bigr)+\sum_{\substack{\mu,\nu\in\Lambda^{+}(m)\\
\mu\neq\nu}}m_{\mu,\nu}^{\lam}\bigl[\widetilde{L}_{\mu,\nu}\bigr].
$$
We apply the map $\chf^{\theta}$ to the above equality and using Lemma \ref{lm51}. It follows that
\addtocounter{equation}{1}
\begin{equation}\label{equa52}
\sqrt{f_{(\lam,\lam)}(q)}\pi\bigl(\chf\Delta_{\lam}\bigr)=\sum_{\mu\in\Lambda^{+}(m)}\bigl(2m_{\lam,\mu}-
d_{\lam,\mu}^2\bigr)
\sqrt{f_{(\mu,\mu)}(q)}\pi\bigl(\chf L_{\mu}\bigr).
\end{equation}
On the other hand, we have that (in $\Z[\Lambda^{+}(m)]$) $$
\chf\Delta_{\lam}=\sum_{\mu\in\Lambda^{+}(m)}d_{\lam,\mu}\chf L_{\mu},
$$
and hence (in $K[\Lambda^{+}(m)]$) \begin{equation}\label{equa53}
\pi\bigl(\chf\Delta_{\lam}\bigr)=\sum_{\mu\in\Lambda^{+}(m)}d_{\lam,\mu}\pi\bigl(\chf L_{\mu}\bigr).
\end{equation}
By the results in \cite{BLM} and \cite{Du}, $S_q(m)$ is an
epimorphic image of the quantum algebra $U_{K}(\mathfrak{gl}_m)$,
hence each irreducible $S_q(m)$-module $L_{\mu}$ is a highest weight
module over the quantum algebra $U_{K}(\mathfrak{gl}_m)$. It follows
easily that the elements in the set $\bigl\{\pi\bigl(\chf
L_{\mu}\bigr)\bigr\}$ are $K$-linear independent. Therefore, we can
compare the coefficients of the equalities (\ref{equa52}) and
(\ref{equa53}) and deduce that (in $K$)
$$
m_{\lam,\mu}=d_{\lam,\mu}\biggl(\frac{\sqrt{f_{(\lam,\lam)}(q)}}{\sqrt{f_{(\mu,\mu)}(q)}}
+d_{\lam,\mu}\biggr)/2.
$$
If $\mu$ is $e$-restricted, we then apply Corollary \ref{cor321} and this proves Theorem \ref{mainthm}.

\bigskip\bigskip\bigskip

\centerline{Acknowledgement}
\bigskip

\thanks{The research was supported by Alexander von Humboldt Foundation and partly by Scientific Research Foundation
for the Returned Overseas Chinese Scholars, State Education Ministry, the Basic Research Foundation of
BIT and the National Natural Science Foundation of China (Project 10771014). The author thanks the hospitality of the Mathematical Institute at the University of Cologne during his visit. He also thanks Professor Peter Littelmann for many helpful discussions.}

\bigskip\bigskip\bigskip

\end{document}